\documentclass{amsart}

\usepackage[margin=2.4cm]{geometry}
\usepackage{amssymb,amsmath,upgreek,mathrsfs,bm,cases,latexsym,hyperref,color,enumerate,graphicx,algorithm,algorithmic,placeins}
\graphicspath{{../exp/results/}{./exp/results/}}

\hypersetup{hypertexnames=false,colorlinks,linkcolor={blue},citecolor={blue},urlcolor={blue}}

\DeclareMathOperator{\proj}{proj}
\newcommand{\dotproduct}[1]{\left\langle#1\right\rangle}
\newcommand{\ud}{\mathrm{d}}
\newcommand{\norm}[1]{\left\|#1\right\|}
\newcommand{\abs}[1]{\left|#1\right|}
\newcommand{\inner}[2]{\langle #1, #2 \rangle}

\def\tilde{\widetilde}

\def\epsilon{\varepsilon}

\def\proj{\mbox{\rm proj}\,}

\def\lm{\lambda}

\def \N{{\rm I\!N}}
\def \R{{\rm I\!R}}

\newtheorem{theorem}{\bf Theorem}[section]
\newtheorem{proposition}[theorem]{\bf Proposition} 
\newtheorem{lemma}[theorem]{\bf Lemma}
\newtheorem{remark}[theorem]{\bf Remark} 
\newtheorem{example}[theorem]{\bf Example} 
\newtheorem{definition}[theorem]{\bf Definition} 
\newtheorem{problem}{\bf Problem}
\newtheorem{corollary}[theorem]{\bf Corollary} 
\newtheorem{assumption}{\bf Assumption} 
\numberwithin{equation}{section}
\allowdisplaybreaks[4]
\begin{document}

\title{A Coupled Nonsmooth Dynamical System:   Global Well-Posedness, Stability and Sensitivity Analysis}

\author[S. Zeng]{Shengda Zeng}
\address[S. Zeng]{National Center for Applied Mathematics in Chongqing\\
School of Mathematical Sciences, Chongqing Normal University,
Chongqing 401331, P.R. China.}
\email{shengdazeng@cqnu.edu.cn}

\author[J. Chen]{Jia Chen}
\address[J. Chen]{National Center for Applied Mathematics in Chongqing\\
School of Mathematical Sciences, Chongqing Normal University,
Chongqing 401331, P.R. China.}
\email{jiachen0314@163.com}

\author[V. T. Phat]{Vo Thanh Phat}
\address[V. T. Phat]{Department of Mathematics and Statistics, University of North Dakota, Grand Forks, North Dakota, United States.}
\email{thanh.vo.1@und.edu}

\subjclass[2020]{35R70, 34A60, 47H10, 47J35}

\date{\today}

\begin{abstract}
This paper studies a coupled nonsmooth dynamical system  in which a semilinear evolution equation is coupled with an implicit algebraic law governed by the normal cone to a time-dependent convex set. The main difficulty is that the algebraic variable is not given by an explicit feedback, but must be recovered from a state-dependent normal cone relation. Using a transformed variable, we recast each frozen algebraic law as a variational inequality over a convex set. Under a strongly pseudomonotone, Lipschitz  continuous hypothesis, this frozen problem has a unique algebraic response and admits a sensitivity estimate for the state-to-control map without a projection-contraction argument. We also give verifiable sufficient conditions for this hypothesis, including a weighted strongly monotone construction and a standard Lipschitz-smallness condition, and point out that the framework allows strongly pseudomonotone nonmonotone frozen operators. The coupled system is then reduced to a semilinear evolution equation with a single state variable. By combining $C_0$-semigroup estimates with a Bielecki fixed-point argument, we prove global existence and uniqueness of mild solution pairs on finite time intervals and establish Hadamard well-posedness through explicit continuous-dependence estimates. We further derive an incremental exponential stability criterion in the dissipative semigroup regime and prove continuity of the parameter-to-solution map with respect to the initial datum and an external parameter. A reduced contact-mechanics example illustrates the variational inequality formulation and an explicit projection residual that can be used as a starting point for projection- and Newton-type inner solvers after discretization.

\end{abstract}
	\keywords{Nonsmooth dynamical system; Variational inequality; Strong pseudomonotonicity; Stability; Sensitivity Analysis; Differential inclusion}

\maketitle

\begin{center}
	{\bf Dedicated to Professor Simeon Reich on the occasion of his 80th birthday}
\end{center}

\section{Introduction}\label{sec:introduction}

Coupled dynamical systems with unilateral constraints, saturation effects, and implicit constitutive laws form a broad class of nonsmooth evolution models in which a differential equation interacts with a variational inequality or a normal cone inclusion. They arise naturally in contact mechanics, sweeping processes, constrained control, and differential variational frameworks \cite{Moreau1977, Aubin1984, LiuZengMotreanu2016, BrogliatoTanwani2020, TangEtAl2020}. A common feature of such models is that the auxiliary variable cannot be prescribed as an explicit feedback of the state; instead, it is determined by a multivalued relation depending on time and on the evolving state trajectory.

Time-dependent normal cone inclusions and sweeping-type systems have been studied through several complementary approaches. The theory of moving convex sets and differential inclusions gives a general framework for nonsmooth constraints \cite{Moreau1977, Aubin1984}, while projection-based formulations are central in implicit sweeping and abstract VI models \cite{AS2019,JV2019,AlshariefJourani2025}. Recent work has also addressed implicit and nonconvex sweeping processes, prox-regular history-dependent inclusions, and controlled sweeping systems with nonsmooth sets \cite{KrejciMonteiroRecupero2021, NacrySofonea2022, PinhoFerreiraSmirnov2023}. Broader evolutionary inclusions governed by time-dependent maximal monotone relations provide an additional functional-analytic background for coupled nonsmooth dynamics \cite{SeifertTrostorffWaurick2022}. Recent work on controlled polyhedral, integro-differential, and history-dependent sweeping processes has further enlarged this viewpoint \cite{Henrion2023,Bouach2022JDE,Bouach2022JOTA,BouachHaddadThibault2022SIAM,Bouach2024ProxRegular,Godoy2025}. Motivated by these developments, we consider a coupled system in which the state satisfies a semilinear evolution equation and the algebraic variable is governed by an implicit normal cone relation involving a bounded coercive operator and a time-dependent convex set.

Most solvability theories for coupled evolution and variational systems are built on compactness arguments for solution sets or on monotonicity methods. Such approaches are well suited to existence theory, but they do not automatically provide a single-valued state-to-control representation with quantitative estimates. In the present problem this representation is essential, because the normal cone relation depends on both the state and the algebraic variable itself. Recovering a unique algebraic response with an explicit Lipschitz bound is the key step toward uniqueness, continuous dependence on the initial datum, and sensitivity with respect to external parameters in the mild-solution setting.

Two difficulties are central:
\begin{enumerate}
    \item[(i)] \textit{Implicit nonsmooth coupling.} The algebraic variable is determined through a normal cone inclusion rather than an explicit constitutive law, so the evolution equation cannot be closed unless one first proves that the implicit relation induces a well-defined single-valued map.
     \item[(ii)] \textit{Quantitative dependence estimates.} For Hadamard well-posedness and sensitivity analysis, it is not enough to solve the frozen variational problem pointwise; one also needs uniform Lipschitz bounds with respect to the state variable and a reduced dynamics compatible with semigroup estimates.
\end{enumerate}

The present paper develops a direct Hadamard well-posedness theory and stability method for this coupled nonsmooth model which contains several important and useful mathematical models as special cases, such as, differential variational inequality (see, Chen et al.~\cite{ChenXJ2022SIOPT,Chen2013SIOPT}, Pang et al.~\cite{PangStewart2008,PangStewart2009}), differential algebraic  equations (see Brogliato and Tanwani~\cite{BrogliatoTanwani2020}, Du et al. ~\cite{SIMAA}) and so forth. The key step is a variational-inequality formulation of the implicit normal cone relation based on the transformed variable $y=Pu$. For each frozen pair $(t,x)$, the algebraic law becomes $\mathrm{VI}(K(t),F_{t,x})$ with $F_{t,x}(y)=P^{-1}y-h(t,x,P^{-1}y)$. Strong pseudomonotonicity gives single-valued solvability through \cite{KimVuongKhanh2016} and yields a sensitivity estimate without requiring a relaxation parameter or a smallness-tuned contraction. We record weighted strongly monotone, strong-monotonicity, and Lipschitz-smallness conditions as convenient sufficient conditions for the frozen VI hypothesis, and a nonmonotone strongly pseudomonotone class shows that the standing hypothesis is not confined to monotone operators. This gives a single-valued state-to-control map and allows the coupled system to be treated as a reduced semilinear evolution equation. The main contributions are as follows:
\begin{itemize}
    \item We establish an equivalence between the implicit normal cone inclusion and a canonical frozen variational inequality for the transformed variable $y=Pu$.
    \item Under a strongly pseudomonotone, Lipschitz, and finite-dimensionally weakly continuous algebraic-operator hypothesis, we prove existence, uniqueness, and Lipschitz continuity of the induced state-to-control map.
    \item We provide explicit verification mechanisms for the algebraic hypothesis:  positive scalar weights of strongly monotone operators can yield genuinely nonmonotone strongly pseudomonotone examples, and a Lipschitz-smallness condition on the feedback gives a concrete strong-monotonicity criterion.
    \item We reduce the coupled problem to a semilinear evolution equation with a single state variable and prove global existence and uniqueness of mild solutions on finite time horizons by a Bielecki fixed-point argument combined with $C_0$-semigroup estimates.
    \item We derive quantitative continuous-dependence estimates with respect to the initial datum, establish global Hadamard well-posedness, and obtain an incremental exponential stability criterion in the dissipative semigroup regime.
    \item We formulate a parameterized perturbation framework for the coupled system and prove continuity of the associated parameter-to-solution map.
    \item We illustrate the abstract assumptions on a reduced contact model, where the VI route applies for all feedback strengths $b_u\ge0$ and yields explicit stability constants and a residual equation that can serve as a starting point for projection- and Newton-type solver design after discretization.
\end{itemize}

The paper is organized as follows. Section~\ref{sec:preliminaries} reviews the tools from convex and functional analysis  used throughout the paper. Section~\ref{sec:system} introduces the coupled model, the standing assumptions, and the projection formulation of the implicit relation, and develops the variational inequality formulation together with a strongly pseudomonotone solvability and sensitivity analysis. Sections~\ref{sec:globalex} and~\ref{sec:stability} are devoted to the analysis of global existence, Hadamard well-posedness, and incremental stability. Section~\ref{sec:perturbation} develops the sensitivity analysis of the parameter-to-solution map. Section \ref{sec:contact_example} illustrates the abstract framework with a reduced contact-mechanics model.  Finally, Section~\ref{sec:concl} concludes the paper with a brief summary and outlook. 

\section{Preliminaries}\label{sec:preliminaries}

We collect the projection, variational inequality, Bielecki norm, and
semigroup facts used below. Throughout, $H$ is a separable real Hilbert
space with inner product $\inner{\cdot}{\cdot}$ and norm $\norm{\cdot}$, and
$\mathbb B_r(x)$ denotes its closed ball. Standard background is available in
\cite{BauschkeCombettes2011,RockafellarWets1998,Brezis,Pazy1983,Zeidler1986}.
For $r,a\in\R$, set
\begin{equation*}
 r_+:=\max\{r,0\},\qquad
 \Phi_a(T):=\int_0^T e^{a(T-s)}\,\ud s.
\end{equation*}

\subsection{Convex Analysis}
For a nonempty, closed, and convex set $C\subset H$, let
$d_C(x):=\inf_{y\in C}\norm{x-y}$. Its metric projection $\proj_C:H\to C$
is uniquely characterized by
\begin{equation*}
    \norm{x - \proj_C(x)} = d_C(x).
\end{equation*}
\begin{lemma}[\bf variational characterization of the projection]\label{lem:proj_vi_char} 
Let $C \subset H$ be nonempty, closed, and convex. For any $z \in H$ and $y \in H$,
\begin{equation}
    y = \proj_C(z)
    \iff
    \big(y \in C \ \text{and} \ \inner{z-y}{v-y} \le 0, \ \forall v \in C\big).
\end{equation}
\end{lemma}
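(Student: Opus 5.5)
We prove the two implications separately, working only from the definition of $\proj_C(z)$ as the unique minimizer of $v\mapsto\norm{z-v}$ over $C$. Existence and uniqueness of this minimizer are taken as given from the preceding paragraph.

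\emph{Forward direction.} Let $y=\proj_C(z)$. Then $y\in C$ by definition. Fix $v\in C$ and, for $t\in(0,1]$, set $v_t:=y+t(v-y)$. Convexity gives $v_t\in C$, so minimality yields $\norm{z-y}^2\le\norm{z-v_t}^2$. Expanding the right-hand side gives
\begin{equation*}
\norm{z-v_t}^2=\norm{z-y}^2-2t\inner{z-y}{v-y}+t^2\norm{v-y}^2 .
\end{equation*}
Cancelling $\norm{z-y}^2$ and dividing by $t>0$ leaves $2\inner{z-y}{v-y}\le t\norm{v-y}^2$. Letting $t\downarrow0$ gives $\inner{z-y}{v-y}\le0$.

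\emph{Reverse direction.} Suppose $y\in C$ and $\inner{z-y}{v-y}\le0$ for all $v\in C$. For any $v\in C$ we write $z-v=(z-y)+(y-v)$ and expand:
\begin{equation*}
\norm{z-v}^2=\norm{z-y}^2+2\inner{z-y}{y-v}+\norm{y-v}^2\ge\norm{z-y}^2 .
\end{equation*}
Here the middle term equals $-2\inner{z-y}{v-y}\ge0$, and the last term is nonnegative. Hence $\norm{z-y}=d_C(z)$, so $y$ is a minimizer of the distance, and by uniqueness of the minimizer $y=\proj_C(z)$.

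Neither direction is hard. The only point needing care is the forward direction: we pass to the limit $t\downarrow0$ only after dividing by $t$, which gives a first-order condition rather than a comparison of norms. The reverse direction is a single algebraic identity combined with the uniqueness of the projection.
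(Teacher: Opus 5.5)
Your proof is correct. The paper gives no proof of this lemma. It lists it among standard preliminary facts and points to references such as \cite{BauschkeCombettes2011}, and your argument is the usual textbook proof: a first-order condition along the segment $y+t(v-y)$ for necessity, and an expansion of $\norm{z-v}^2$ for sufficiency.

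One small refinement: in the reverse direction you can keep the term $\norm{y-v}^2$ instead of discarding it, which gives $\norm{z-v}^2\ge\norm{z-y}^2+\norm{y-v}^2$ for all $v\in C$. That shows directly that $y$ is the unique minimizer, so you do not need the uniqueness of the projection from the preceding paragraph.
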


\begin{lemma}[\bf non-expansiveness of projection]\label{lem:non_expansive}
For every nonempty, closed, and convex $C\subset H$,
\begin{equation}
    \norm{\proj_C(x) - \proj_C(y)} \le \norm{x - y}.
\end{equation}
\end{lemma}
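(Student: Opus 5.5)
The non-expansiveness estimate follows from the variational characterization in Lemma~\ref{lem:proj_vi_char}, used twice with crossed test points. Fix $x,y\in H$ and set $p:=\proj_C(x)$, $q:=\proj_C(y)$. Both lie in $C$, so each is an admissible test point in the characterization of the other. Applying Lemma~\ref{lem:proj_vi_char} to $p=\proj_C(x)$ with $v=q$, and to $q=\proj_C(y)$ with $v=p$, gives
\begin{equation*}
\inner{x-p}{q-p}\le 0,\qquad \inner{y-q}{p-q}\le 0 .
\end{equation*}

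Adding the two inequalities and rearranging yields
\begin{equation*}
\inner{(x-y)-(p-q)}{q-p}\le 0,
\end{equation*}
which is the same as $\norm{p-q}^2\le \inner{x-y}{p-q}$. In words, the projection is firmly nonexpansive. This rearrangement is the only step that needs care, and the main point is to keep the signs right when combining the two inequalities.

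To conclude, apply the Cauchy--Schwarz inequality to the right-hand side:
\begin{equation*}
\norm{p-q}^2\le \norm{x-y}\,\norm{p-q}.
\end{equation*}
If $p=q$, the claim is trivial. Otherwise, dividing by $\norm{p-q}>0$ gives $\norm{\proj_C(x)-\proj_C(y)}\le\norm{x-y}$.
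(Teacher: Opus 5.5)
Your proof is correct and complete. The paper gives no proof of this lemma at all; it records it as a standard fact, with background deferred to the cited convex-analysis texts such as Bauschke--Combettes. Your argument is the standard textbook proof: cross-test the two variational inequalities from Lemma~\ref{lem:proj_vi_char}, add them to obtain firm nonexpansiveness $\norm{p-q}^2\le\inner{x-y}{p-q}$, and finish with Cauchy--Schwarz.
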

  
For a closed convex $C\subset H$, its normal cone is
\begin{equation}\label{def:normal_cone}
    N_C(x) = \{ \xi \in H : \inner{\xi}{y - x} \le 0, \quad \forall y \in C \}.
\end{equation}
for $x\in C$, and $N_C(x):=\emptyset$ otherwise.
\begin{proposition}[\bf projection-normal cone equivalence]\label{thm:equivalence_prelim}
Let $C \subset H$ be closed and convex, and let $\rho > 0$. For any $y, \xi \in H$, the normal cone inclusion is equivalent to the projection equation:
\begin{equation}
    \xi \in N_C(y) \iff y = \proj_C(y + \rho \xi).
\end{equation}
\end{proposition}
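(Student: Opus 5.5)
The plan is to derive the equivalence directly from Lemma~\ref{lem:proj_vi_char}, with $z:=y+\rho\xi$. The only subtlety is that the normal cone is empty off $C$, so both sides must encode membership $y\in C$. We tacitly assume $C\neq\emptyset$, as Lemma~\ref{lem:proj_vi_char} requires; this is needed anyway for $\proj_C$ to be defined.

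For the forward implication, suppose $\xi\in N_C(y)$. By definition~\eqref{def:normal_cone} this forces $y\in C$, since otherwise $N_C(y)=\emptyset$. It also gives $\inner{\xi}{v-y}\le 0$ for all $v\in C$. Multiplying by $\rho>0$ yields
\begin{equation*}
\inner{(y+\rho\xi)-y}{v-y}=\rho\inner{\xi}{v-y}\le 0\quad\text{for all } v\in C.
\end{equation*}
Together with $y\in C$, Lemma~\ref{lem:proj_vi_char} (applied with $z=y+\rho\xi$) gives $y=\proj_C(y+\rho\xi)$.

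For the reverse implication, suppose $y=\proj_C(y+\rho\xi)$. Then $y\in C$, because the projection takes values in $C$. Lemma~\ref{lem:proj_vi_char} then gives $\rho\inner{\xi}{v-y}=\inner{(y+\rho\xi)-y}{v-y}\le0$ for every $v\in C$. Dividing by $\rho>0$ shows $\inner{\xi}{v-y}\le0$ for all $v\in C$, and since $y\in C$, this says exactly $\xi\in N_C(y)$.

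No step is a real obstacle. The one point needing care is the bookkeeping of $y\in C$: on the normal-cone side it comes from the convention $N_C(y)=\emptyset$ for $y\notin C$, and on the projection side from the range of $\proj_C$. The positivity of $\rho$ is used only to pass the scaling through the inequality in both directions.
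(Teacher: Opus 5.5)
Your argument is correct. The paper states this proposition in the preliminaries without proof, as standard background next to Lemma~\ref{lem:proj_vi_char}, and your derivation is the standard one: take $z=y+\rho\xi$ in that lemma and use $\rho>0$ to rescale the inequality in both directions. Your point that $C$ must be nonempty for the projection to be defined is a fair correction to the statement as written, and it costs nothing in the paper's application, where $C=K(t)$ is nonempty by Assumption~\ref{ass:operators}.
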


For a nonempty closed convex $K\subset H$, an operator $F:H\to H$ is
\emph{weakly continuous on $K$} if its restriction to the intersection of
$K$ with every finite-dimensional subspace is continuous for the weak
topology. We use the following standard generalized monotonicity notions
\cite{KS90,KimVuongKhanh2016,ZM95}:
\begin{itemize}
\item $F$ is {\em monotone} if
$
\langle F(x)-F(y), x-y\rangle \geq 0 \quad \text{for all }\; x, y \in H,
$
and {\em strongly monotone} with modulus $m_F>0$ if
$
\langle F(x)-F(y), x-y\rangle \geq m_F\norm{x-y}_H^2 \quad \text{for all }\; x, y \in H.
$
\item $F$ is {\em pseudomonotone} if
$
\langle F(y),x-y\rangle \geq 0 \Longrightarrow \langle F(x),x-y\rangle \geq 0 \quad \text{for all }\; x, y \in H.
$
It is {\em strongly pseudomonotone} with modulus $m_F>0$ if the right-hand
side is strengthened to
$
\langle F(x),x-y\rangle \geq m_F\norm{x-y}^2_H.
$
\end{itemize}
Strong monotonicity implies strong pseudomonotonicity, and monotonicity implies
pseudomonotonicity. The following weighted construction will be used later.

\begin{example}\label{prop:weighted_strong_pseudomono} Let $C\subset H$ be nonempty. Suppose $G:H\to H$ is strongly monotone on $C$ with modulus $\mu>0$, and let $a:C\to\R$ be a function satisfying
\begin{equation*}
    a(y)\ge M>0 \quad \text{for all } y\in C,
\end{equation*}
where $M>0$ is a constant. Define $F:H\to H$ on $C$ by
\begin{equation*}
    F(y)=a(y)G(y),\qquad y\in C.
\end{equation*}
Then $F$ is strongly pseudomonotone on $C$ with modulus $M\mu$.
\end{example}
\begin{proof}
Let $y_1,y_2\in C$. Suppose that $\inner{F(y_1)}{y_2-y_1}\ge 0$, 
then  $\inner{G(y_1)}{y_2-y_1}\ge 0$ because $a(y_1)>0$. Since $G$ is strongly monotone with modulus $\mu>0$, we have 
\begin{equation*}
    \inner{G(y_2)}{y_2 - y_1}
    =
    \inner{G(y_1)}{y_2 - y_1}
    +\inner{G(y_2)-G(y_1)}{y_2-y_1}
    \ge\mu\norm{y_2-y_1}^2,
\end{equation*}
which yields the following estimates
\begin{equation*}
    \inner{F(y_2)}{y_2-y_1}
    =
    a(y_2)\inner{G(y_2)}{y_2-y_1}
    \ge M\mu\norm{y_2-y_1}^2,
\end{equation*}    
which completes the proof. 
\end{proof}
The problem of finding a vector $u^* \in K \subset H$ satisfying
\begin{equation}\label{VI}
\dotproduct{F(u^*),u-u^*}\geq 0 \quad \text{for all } u \in K
\end{equation}
is called a {\em variational inequality} (VI, for short). We denote this problem by $\text{\rm VI}(K,F)$ and its solution set by $\text{\rm Sol}(K,F)$. The following result establishes the sensitivity of solutions to variational inequalities.

\begin{lemma}[\bf solution sensitivity for strongly pseudomonotone VIs]\label{lem:vi_sensitivity}
Let $C\subset H$ be nonempty, closed, and convex. Let $F_1:H\to H$ be strongly pseudomonotone on $C$ with modulus $\gamma>0$, and let $F_2:H\to H$ be arbitrary. If $y_1$ solves $\mathrm{VI}(C,F_1)$ and $y_2$ solves $\mathrm{VI}(C,F_2)$, then
\begin{equation}\label{eq:vi_sensitivity}
    \gamma\norm{y_1-y_2}\le\norm{F_1(y_2)-F_2(y_2)}.
\end{equation}
\end{lemma}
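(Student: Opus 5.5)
The plan is to test each variational inequality with the other solution and then apply strong pseudomonotonicity of $F_1$ in the direction that keeps $F_1(y_2)$ in the estimate.

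\textbf{Step 1.} Since $y_2\in C$ solves $\mathrm{VI}(C,F_2)$, taking the test point $u=y_1\in C$ gives
\[
\inner{F_2(y_2)}{y_1-y_2}\ge 0,
\qquad\text{equivalently}\qquad
\inner{F_2(y_2)}{y_2-y_1}\le 0 .
\]

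\textbf{Step 2.} Since $y_1$ solves $\mathrm{VI}(C,F_1)$, taking $u=y_2$ gives $\inner{F_1(y_1)}{y_2-y_1}\ge0$. Both $y_1$ and $y_2$ lie in $C$. Apply strong pseudomonotonicity of $F_1$ on $C$ with $y=y_1$ and $x=y_2$. This yields
\[
\inner{F_1(y_2)}{y_2-y_1}\ge\gamma\norm{y_2-y_1}^2 .
\]
This choice of roles is deliberate. It produces $F_1$ evaluated at $y_2$, which is exactly the point at which $F_1$ is compared with $F_2$ in \eqref{eq:vi_sensitivity}.

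\textbf{Step 3.} Subtract the inequality of Step 1 from that of Step 2:
\[
\gamma\norm{y_1-y_2}^2\le\inner{F_1(y_2)-F_2(y_2)}{y_2-y_1}\le\norm{F_1(y_2)-F_2(y_2)}\,\norm{y_2-y_1},
\]
where the second inequality is Cauchy--Schwarz.

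\textbf{Step 4.} If $y_1=y_2$, then \eqref{eq:vi_sensitivity} holds trivially. Otherwise, divide by $\norm{y_1-y_2}>0$ to obtain \eqref{eq:vi_sensitivity}.

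The only step that needs care is Step 2. Pseudomonotonicity is not symmetric, so the implication has to be applied with the premise at $y_1$ and the conclusion at $y_2$; the reverse choice would not produce $F_1(y_2)$. No monotonicity or other property of $F_2$ is needed.
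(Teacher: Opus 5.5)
Your proposal is correct and follows essentially the same argument as the paper. Both test each VI at the other solution, apply strong pseudomonotonicity of $F_1$ with the premise at $y_1$ and the conclusion at $y_2$, then combine the two inequalities, apply Cauchy--Schwarz, and divide by $\norm{y_1-y_2}$ (treating $y_1=y_2$ separately).
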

\begin{proof}
If $y_1=y_2$, then \eqref{eq:vi_sensitivity} holds trivially. Hence, assume that $y_1\neq y_2$. Since $y_1$ is a solution of $\mathrm{VI}(C,F_1)$, we have 
    $\inner{F_1(y_1)}{y_2-y_1}\ge 0.$ 
Invoking the strong pseudomonotonicity of $F_1$, we obtain
\begin{equation*}
    \inner{F_1(y_2)}{y_2-y_1}\ge\gamma\norm{y_2-y_1}^2.
\end{equation*}
On the other hand, since $y_2$ solves $\mathrm{VI}(C,F_2)$, we have  $
    \inner{F_2(y_2)}{y_1-y_2}\ge 0.$ 
Combining the last two inequalities, we deduce
\begin{equation*}
    \gamma\norm{y_2-y_1}^2
    \le
    \inner{F_1(y_2)-F_2(y_2)}{y_2-y_1}
    \le
    \norm{F_1(y_2)-F_2(y_2)}\,\norm{y_2-y_1}.
\end{equation*}
Finally, dividing both sides by $\norm{y_2-y_1}>0$ yields \eqref{eq:vi_sensitivity}.
\end{proof}

\subsection{Real and Functional Analysis}
For $I=[0,T]$, equip $C(I;H)$ with
$\norm{v}_{C(I;H)}:=\max_{t\in I}\norm{v(t)}$. The Bielecki norm is
\begin{equation}
    \norm{v}_\lambda := \max_{t \in I} e^{-\lambda t} \norm{v(t)}, \quad \text{for a tuning parameter } \lambda > 0, \; v \in C(I;H).
\end{equation} 
\begin{lemma}[\bf equivalence of Bielecki and maximum norms]\label{lem:bielecki_equiv}
Let $\lambda>0$, $T>0$, $I = [0,T]$ and $v\in C(I;H)$. Then
\begin{equation}
    e^{-\lambda T}\norm{v}_{C(I;H)} \le \norm{v}_\lambda \le \norm{v}_{C(I;H)}.
\end{equation}
In particular, $\norm{\cdot}_\lambda$ is equivalent to the standard maximum norm on $C(I;H)$. 
\end{lemma}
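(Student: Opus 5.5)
The plan is to obtain both inequalities from pointwise bounds on the weight $e^{-\lambda t}$ over $I=[0,T]$, and then take maxima over $t\in I$. Since $\lambda>0$, the map $t\mapsto e^{-\lambda t}$ is decreasing on $I$, so for every $t\in I$ we have
\begin{equation*}
 e^{-\lambda T}\le e^{-\lambda t}\le 1 .
\end{equation*}
Multiplying by $\norm{v(t)}\ge 0$ gives the pointwise chain
\begin{equation*}
 e^{-\lambda T}\norm{v(t)}\le e^{-\lambda t}\norm{v(t)}\le \norm{v(t)},\qquad t\in I.
\end{equation*}

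First we check that all maxima involved exist. Because $v\in C(I;H)$ and the norm is continuous, both $t\mapsto\norm{v(t)}$ and $t\mapsto e^{-\lambda t}\norm{v(t)}$ are continuous real functions on the compact interval $I$. Hence both maxima are attained and finite.

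For the upper bound, we bound the right inequality of the pointwise chain by $\norm{v}_{C(I;H)}$ and then maximize the left side over $t$. This yields $\norm{v}_\lambda\le\norm{v}_{C(I;H)}$.

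For the lower bound, we pick $t^*\in I$ with $\norm{v(t^*)}=\norm{v}_{C(I;H)}$. Then
\begin{equation*}
 e^{-\lambda T}\norm{v}_{C(I;H)}=e^{-\lambda T}\norm{v(t^*)}\le e^{-\lambda t^*}\norm{v(t^*)}\le\norm{v}_\lambda .
\end{equation*}

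It remains to justify the claim that $\norm{\cdot}_\lambda$ is a norm equivalent to $\norm{\cdot}_{C(I;H)}$. Absolute homogeneity and the triangle inequality pass through the positive weight and the maximum. Definiteness, and the equivalence itself, follow from the two-sided bound with constants $e^{-\lambda T}$ and $1$.

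As a consequence, $(C(I;H),\norm{\cdot}_\lambda)$ is a Banach space, which is what the later Bielecki fixed-point argument needs. There is no real obstacle here. The only point needing care is that the maximum defining $\norm{v}_{C(I;H)}$ is attained, and this follows from compactness of $I$.
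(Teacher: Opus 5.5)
Your proof is correct and is the standard argument. The paper states this lemma without proof as background, and your pointwise bounds $e^{-\lambda T}\le e^{-\lambda t}\le 1$ on $I$, followed by taking maxima, are what it implicitly relies on. One small remark: attainment of the maximum is not actually needed for the lower bound, since $e^{-\lambda T}\norm{v(t)}\le e^{-\lambda t}\norm{v(t)}\le\norm{v}_\lambda$ holds for every $t\in I$ and you can simply take the supremum over $t$.
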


A $C_0$-semigroup $\{T(t)\}_{t\ge0}$ satisfies $T(0)=I$,
$T(t+s)=T(t)T(s)$, and $T(t)x\to x$ as $t\downarrow0$. Its generator is
\begin{equation*}
 D(A):=\left\{x:\lim_{t\downarrow0}\frac{T(t)x-x}{t}\text{ exists}\right\},
 \qquad Ax:=\lim_{t\downarrow0}\frac{T(t)x-x}{t}.
\end{equation*}
We use the following standard consequences from \cite{Pazy1983}.

\begin{lemma}[\bf exponential boundedness of $C_0$-semigroups]\label{lem:semigroup_bound_prelim}
Let $A:D(A)\subset H\to H$ generate a strongly continuous semigroup $\{e^{At}\}_{t\ge 0}$ on $H$. Then there exist constants $M\ge 1$ and $\omega\in\R$ such that
\begin{equation}
    \norm{e^{At}}\le M e^{\omega t} \quad \text{for all } \; t\ge 0.
\end{equation}
Moreover, for each $x\in H$, the orbit $t\mapsto e^{At}x$ is continuous on $[0,\infty)$. 
\end{lemma}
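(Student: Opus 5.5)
The plan follows the standard semigroup route of Pazy: first bound the semigroup on a short interval, then propagate that bound to all times with the semigroup law, and finally deduce continuity of orbits from the bound together with strong continuity at the origin.

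\textbf{Step 1: a uniform bound near $t=0$.} We claim there is $\delta>0$ with $\sup_{0\le t\le\delta}\norm{e^{At}}<\infty$. Suppose not. Then there are $t_n\downarrow0$ with $\norm{e^{At_n}}\to\infty$. By the uniform boundedness principle there is some $x\in H$ with $\sup_n\norm{e^{At_n}x}=\infty$. This contradicts strong continuity, since $e^{At_n}x\to x$ forces $\sup_n\norm{e^{At_n}x}<\infty$. Set $M:=\sup_{[0,\delta]}\norm{e^{At}}$. Then $M\ge1$, because $e^{A0}=I$.

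\textbf{Step 2: exponential bound for all $t\ge0$.} Define $\omega:=\delta^{-1}\log M\ge0$. For $t\ge0$, write $t=n\delta+r$ with $n\in\N$ and $0\le r<\delta$. The semigroup property gives
$$\norm{e^{At}}\le\norm{e^{Ar}}\,\norm{e^{A\delta}}^n\le M\cdot M^n\le M\,M^{t/\delta}=Me^{\omega t}.$$

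\textbf{Step 3: continuity of orbits.} Fix $x\in H$ and $t\ge0$. For right continuity, take $h\downarrow0$:
$$\norm{e^{A(t+h)}x-e^{At}x}\le\norm{e^{At}}\,\norm{e^{Ah}x-x}\to0.$$
For left continuity, take $t>0$ and $0<h\le t$:
$$\norm{e^{A(t-h)}x-e^{At}x}\le\norm{e^{A(t-h)}}\,\norm{x-e^{Ah}x}\le Me^{\omega t}\norm{x-e^{Ah}x}\to0.$$
This estimate relies on the uniform bound from Step 2.

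The main obstacle is Step 1. Strong continuity alone gives only pointwise information, and the Banach--Steinhaus argument is what upgrades it to a uniform bound in operator norm.
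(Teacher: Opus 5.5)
Your proof is correct, and it is the standard argument from Pazy (Theorem 1.2.2 and Corollary 1.2.3): Banach--Steinhaus gives a bound on a short interval, the semigroup law extends it to an exponential bound, and continuity of orbits then follows from strong continuity at $0$ (the left-continuity step correctly uses $\omega\ge 0$). The paper gives no proof of its own and simply cites \cite{Pazy1983}, so your route is the one it relies on.
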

Write $L^1(0,T;H)$ for the Bochner-integrable $H$-valued functions.
\begin{lemma}[\bf continuity of semigroup convolutions]\label{lem:semigroup_convolution_prelim}
Let $A$ generate the semigroup $\{e^{At}\}_{t\ge 0}$ on $H$, and let $g\in L^1(0,T;H)$. Then the semigroup convolution defined by 
$$
(\mathcal I g)(t):=\int_0^t e^{A(t-s)}g(s)\,\ud s,\qquad t\in[0,T],
$$
is well defined as a Bochner integral and belongs to $C([0,T];H)$.

\end{lemma}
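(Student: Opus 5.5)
The plan is to prove the two assertions of Lemma~\ref{lem:semigroup_convolution_prelim} in turn: first that the Bochner integral exists for each fixed $t$, and then that $t\mapsto(\mathcal I g)(t)$ is continuous on $[0,T]$.

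\textbf{Well-definedness.} Fix $t\in[0,T]$ and look at the integrand $s\mapsto e^{A(t-s)}g(s)$ on $[0,t]$.
\begin{itemize}
\item[(a)] I first show it is strongly measurable. Take simple functions $g_n\to g$ a.e.\ with $\norm{g_n}\le 2\norm{g}$. For a simple $g_n$, the map $s\mapsto e^{A(t-s)}g_n(s)$ is piecewise continuous, by Lemma~\ref{lem:semigroup_bound_prelim} applied to finitely many vectors, so it is strongly measurable. Since $\norm{e^{A(t-s)}}\le Me^{|\omega|T}$, these maps converge a.e.\ to $e^{A(t-s)}g(s)$, so the limit is strongly measurable.
\item[(b)] The same uniform bound gives $\int_0^t\norm{e^{A(t-s)}g(s)}\,\ud s\le Me^{|\omega|T}\norm{g}_{L^1}$. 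By Bochner's theorem the integral exists.
\end{itemize}

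\textbf{Continuity.} I use a density argument.
\begin{itemize}
\item[(a)] For every $g\in L^1$ and $t\in[0,T]$, the bound above gives $\norm{(\mathcal I g)(t)}\le Me^{|\omega|T}\norm{g}_{L^1}$. Hence $\mathcal I$ is a bounded linear map from $L^1(0,T;H)$ into the bounded functions with the sup norm.
\item[(b)] Uniform limits of continuous functions are continuous. So it suffices to prove $\mathcal I g\in C([0,T];H)$ for $g$ in a dense subclass, for instance step functions $g=\sum_k \chi_{[a_k,b_k)}x_k$.
\item[(c)] By linearity this reduces to a single term $g=\chi_{[a,b)}x$. Substituting $r=t-s$ gives
\[
(\mathcal I g)(t)=\int_{(t-b)_+}^{(t-a)_+} e^{Ar}x\,\ud r .
\]
This is an integral of the continuous orbit $r\mapsto e^{Ar}x$ between limits that depend continuously on $t$. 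It is therefore continuous in $t$, and indeed Lipschitz with constant $Me^{|\omega|T}\norm{x}$.
\end{itemize}

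\textbf{Alternative route.} One could instead estimate directly, for $t<t'$,
\[
(\mathcal I g)(t')-(\mathcal I g)(t)=\int_t^{t'}e^{A(t'-s)}g(s)\,\ud s+\int_0^t\big(e^{A(t'-s)}-e^{A(t-s)}\big)g(s)\,\ud s .
\]
The first term tends to zero by absolute continuity of the integral. The second tends to zero by strong continuity and dominated convergence. However, $t$ moves inside both the integrand and the limits, so this route needs care with left limits. The density route avoids that.

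\textbf{Main obstacle.} The main delicate point is the measurability in the well-definedness step. The semigroup is only strongly continuous, not norm continuous, so we cannot treat $s\mapsto e^{A(t-s)}$ as a measurable operator-valued function. Approximating $g$ by simple functions resolves this, since it only uses strong continuity on finitely many vectors.
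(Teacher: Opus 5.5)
Your proof is correct. The paper gives no argument for this lemma: it is listed among the ``standard consequences'' of semigroup theory taken from Pazy's monograph. Your density proof is therefore a self-contained alternative to a bare citation.

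What your route buys is transparency about the ingredients. It uses only the uniform bound $\norm{e^{At}}\le Me^{|\omega|T}$ on $[0,T]$ and continuity of individual orbits, i.e.\ exactly Lemma~\ref{lem:semigroup_bound_prelim}. It also correctly identifies the one delicate point: strong measurability of $s\mapsto e^{A(t-s)}g(s)$ when the semigroup is only strongly continuous.

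There are two small inaccuracies, neither of which affects the conclusion.
\begin{enumerate}
\item For a general simple function $g_n=\sum_k\chi_{E_k}x_k$ with merely measurable $E_k$, the map $s\mapsto e^{A(t-s)}g_n(s)$ is not piecewise continuous. It is, however, a finite sum of measurable scalar indicators times continuous orbits, hence still strongly measurable. Alternatively, take interval step functions converging in $L^1$ and pass to an a.e.\ convergent subsequence.
\item In step (c) both endpoints $(t-a)_+$ and $(t-b)_+$ move with $t$. The correct Lipschitz constant is therefore $2Me^{|\omega|T}\norm{x}$, not $Me^{|\omega|T}\norm{x}$. The rotation semigroup on $\R^2$ with $b-a=\pi$ attains the factor $2$.
\end{enumerate}

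Your alternative direct route would also go through without trouble. For left limits $t'\uparrow t$, write $e^{A(t-s)}-e^{A(t'-s)}=e^{A(t'-s)}\bigl(e^{A(t-t')}-I\bigr)$. Then apply dominated convergence to $\norm{(e^{A(t-t')}-I)g(s)}$, which is dominated by $(Me^{|\omega|T}+1)\norm{g(s)}$.
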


Let $X$ be a separable metric space and $T>0$. A mapping $G:[0,T]\times X\to H$ is called a {\em Carath\'{e}odory mapping} if, for every $z\in X$, the map $t\mapsto G(t,z)$ is strongly measurable, and, for almost every $t\in[0,T]$, the map $z\mapsto G(t,z)$ is continuous. 
The following theorem gives the superposition result for Carath\'eodory mappings. 
\begin{lemma}
[\bf superposition for Carath\'{e}odory mappings]\label{lem:caratheodory_superposition_prelim}
Let $X$ be a separable metric space, let $G:[0,T]\times X\to H$ be a Carath\'{e}odory mapping, and let $z\in C([0,T];X)$. Then $t\mapsto G(t,z(t))$ is strongly measurable. If, in addition, $t\mapsto \norm{G(t,z(t))}$ is dominated by a function in $L^1(0,T)$, 
then $G(\cdot,z(\cdot))\in L^1(0,T;H)$.  
\end{lemma}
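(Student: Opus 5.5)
The strategy is to work with simple functions: approximate the continuous path $z$ by piecewise constant maps, for which measurability is clear, and then pass to the limit using the Carathéodory continuity in the second variable.

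\textbf{Measurability.} Since $z:[0,T]\to X$ is continuous, $z$ is uniformly continuous on the compact interval. For each $n$, partition $[0,T]$ into intervals $I_k^n=[t_{k-1}^n,t_k^n)$ of mesh $T/n$ (the last one closed) and set $z_n(t):=z(t_{k-1}^n)$ for $t\in I_k^n$. Then $z_n$ takes finitely many values and $z_n(t)\to z(t)$ for every $t$. For fixed $n$,
\begin{equation*}
G(t,z_n(t))=\sum_k \mathbf 1_{I_k^n}(t)\,G\bigl(t,z(t_{k-1}^n)\bigr).
\end{equation*}
Each summand is strongly measurable, because $t\mapsto G(t,x)$ is strongly measurable for every fixed $x$ and multiplying by the indicator of a Borel set preserves this. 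A finite sum of strongly measurable functions is strongly measurable.

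\textbf{Passage to the limit.} Let $N$ be the null set outside which $x\mapsto G(t,x)$ is continuous. For $t\notin N$, continuity gives $G(t,z_n(t))\to G(t,z(t))$ in $H$. So $G(\cdot,z(\cdot))$ is an a.e. pointwise limit of strongly measurable functions. By Pettis' theorem, or the standard fact that a.e. limits of strongly measurable functions are strongly measurable, $G(\cdot,z(\cdot))$ is strongly measurable. Separability of $H$ means weak and strong measurability coincide, so either route works. Separability of $X$ is not needed here beyond the standing hypothesis, since the approximation uses only finitely many values of $z$.

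\textbf{Integrability.} If $\norm{G(t,z(t))}\le g(t)$ a.e. with $g\in L^1(0,T)$, then $\int_0^T\norm{G(t,z(t))}\,\ud t<\infty$. This requires measurability of $t\mapsto\norm{G(t,z(t))}$, which follows from the strong measurability just shown because the norm is continuous. Bochner's criterion (strongly measurable with integrable norm) then gives $G(\cdot,z(\cdot))\in L^1(0,T;H)$.

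\textbf{Main obstacle.} The only delicate point is the measurability step. The approximation must be arranged so that the exceptional null set $N$ is independent of $n$. This holds because $N$ comes from the Carathéodory hypothesis alone and not from the approximation, which is why piecewise constant approximations of $z$ are used.
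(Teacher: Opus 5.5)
Your proof is correct. Pointwise convergence $z_n\to z$ follows from uniform continuity. Each $t\mapsto G(t,z_n(t))$ is a finite sum of indicator-weighted strongly measurable functions. The exceptional null set comes from the Carath\'{e}odory hypothesis alone, so it does not depend on $n$. A.e.\ limits of strongly measurable functions are strongly measurable, and Bochner's criterion then gives integrability under the domination hypothesis.

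The paper itself does not prove this lemma. It is listed among the preliminaries as standard background, so there is no argument in the paper to compare with.

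Compared with the usual textbook proof, your route is more elementary but covers a narrower case. The standard superposition result allows merely measurable $z$, and there separability of $X$ is genuinely needed: either to approximate $z$ by countably-valued measurable maps, or to show that $G$ is jointly $\mathcal L\otimes\mathcal B(X)$-measurable. By exploiting continuity of $z$, you only use finitely many values of $z$ at each stage, which is why separability of $X$ drops out, as you correctly observe.

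The paper only invokes the lemma along continuous trajectories, for example $x\in C([0,T];H)$ in the global existence theorem. Your version therefore covers every use the paper makes of it.
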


\section{System Modeling and Equivalent Transformation}\label{sec:system}
We impose the following assumptions on the operators and data.
\begin{assumption}[\bf standing assumption] \rm \label{ass:operators} \textbf{}
\begin{itemize}
 \item[\bf (i)] The state operator $A: D(A) \subset H \to H$ is the infinitesimal generator of a strongly continuous semigroup $\{e^{At}\}_{t \ge 0}$ on $H$.
\item[\bf (ii)] The operator $P: H \to H$ is linear, bounded, self-adjoint, and coercive. In particular, $P$ is invertible with bounded inverse, and $P^{-1}$ is strongly monotone.
\item[\bf (iii)] The moving set $K: [0, T] \rightrightarrows H$ is a set-valued mapping defined from $[0, T]$ to $H$ with nonempty, closed, and convex values. 
\item[\bf (iv)] The nonlinear mappings $f, h: [0, T] \times H \times H \to H$ are Carath\'{e}odory mappings. In addition, the inhomogeneity of the state equation is integrable at the origin:
    \begin{equation}
        t \mapsto f(t,0,0) \in L^1(0,T;H).
    \end{equation}
\end{itemize}
\end{assumption}
We consider the following coupled system in mild state form and implicit
variational form. It is a Hilbert-space normal-cone counterpart of
differential variational inequality models and related nonsmooth couplings
\cite{PangStewart2008,LiuZengMotreanu2016,TangEtAl2020,
BrogliatoTanwani2020,ZengDuTimoshin2026}.
\begin{problem}\label{prob:primal} \rm For $T>0$, find a pair $(x,u)\in C([0,T];H)\times C([0,T];H)$ such that
\begin{numcases}{}
    x(t) = e^{At}x_0 + \int_0^t e^{A(t-s)} f(s, x(s), u(s)) \, \ud s, \quad t \in [0, T], \label{eq:state_mild} \\
    u(t) \in h(t, x(t), u(t)) - N_{K(t)}(P u(t)), \quad \text{for a.e. } t \in (0,T). \label{eq:inclusion}
\end{numcases}
\end{problem}
When convenient, we shall informally refer to the differential relation
\begin{equation*}
\dot{x}(t) = A x(t) + f(t, x(t), u(t)),
\qquad x(0)=x_0,
\end{equation*}
as the {\em strong form} representation associated with the mild equation \eqref{eq:state_mild}.

\medskip To establish the connection between the implicit inclusion and the state equation, we first reformulate the normal cone relation as a parameterized variational inequality (VI). This approach is inspired by the framework of differential variational inequalities and related evolutionary variational models; see, for example, \cite{PangStewart2008, LiuZengMotreanu2016, TangEtAl2020}. By the definition of normal cones given in  \eqref{def:normal_cone}, the inclusion \eqref{eq:inclusion} can be equivalently rewritten as the following variational inequality: For almost everywhere $t \in (0,T),$ we have 
\begin{equation}\label{eq:vi_u}
 Pu(t) \in K(t) \quad \text{and }\;     \inner{h(t, x(t), u(t)) - u(t)}{v - P u(t)} \le 0, \quad \forall v \in K(t).
\end{equation}
Passing to the transformed variable $y=Pu$ gives the canonical variational inequality
\begin{equation}\label{eq:vi_canonical}
    \text{Find } y\in K(t) \text{ such that }
    \inner{F_{t,x}(y)}{v-y}\ge 0,\quad \forall v\in K(t),
\end{equation}
where
\begin{equation}\label{eq:F_def}
    F_{t,x}(y):=P^{-1}y-h(t,x,P^{-1}y), \quad y \in H.
\end{equation}
Hence, in order to decouple the system and solve the state equation, it is necessary to establish that, for any given state $x \in H$ and time $t$, the frozen variational inequality introduced below admits a unique solution $u_x(t)$, and that the state-to-control mapping $x \mapsto u_x(t)$ is uniformly Lipschitz continuous. The following assumption provides a sufficient condition.

\begin{assumption}[\bf strong pseudomonotonicity assumption]\label{ass:strong_pseudomono} \rm  There are $\gamma >0$ and $L_F>0$ such that for every $t\in[0,T]$ and every $x\in H$ the operator $F_{t,x}$ defined as in \eqref{eq:F_def}
is strongly pseudomonotone with modulus $\gamma$   on $K(t)$ and Lipschitz continuous with modulus $L_F$ on $H$. 
\end{assumption}

\begin{remark}[\bf discussion on Assumption \ref{ass:strong_pseudomono}]\label{remark:assstrong} \rm  The class of strongly pseudomonotone operators is quite broad and does not impose a restrictive assumption. We observe the following:
\begin{itemize}
    \item[\bf (i)] Suppose that Assumption \ref{ass:operators} holds, and  there exists $\beta_h >0$ such that   
    \begin{equation}\label{Liph}
    \norm{h(t, x, u_1) - h(t, x, u_2)} \le \beta_h \norm{u_1 - u_2} \quad \text{for all }\; t \in [0,T], x \in H, u_1, u_2 \in H,
    \end{equation}
    and $\beta_h \norm{P^{-1}} < m_P$, where $m_P$ denotes the coercivity modulus of $P^{-1}$. We show that  $F_{t,x}$ is strongly monotone. Indeed, for any $y_1, y_2 \in H$, $d:=y_1-y_2$, we have  
    \begin{align*}
    \inner{F_{t,x}(y_1)-F_{t,x}(y_2)}{d}
    &=
    \inner{P^{-1}d}{d}
    -\inner{h(t,x,P^{-1}y_1)-h(t,x,P^{-1}y_2)}{d} \\
    &\ge
    m_P\norm{d}^2
    -\beta_h\norm{P^{-1}}\norm{d}^2
    =
    \bigl(m_P-\beta_h\norm{P^{-1}}\bigr)\norm{d}^2.
\end{align*}
Consequently, $F_{t,x}$ in \eqref{eq:F_def} is  strongly pseudomonotone. The Lipschitz continuity of $F_{t,x}$ is satisfied due to \eqref{Liph}, and thus   Assumption  \ref{ass:strong_pseudomono} holds. 
   \item[\bf (ii)]    Assumption~\ref{ass:strong_pseudomono}  includes frozen operators that are strongly pseudomonotone but not monotone. Indeed, let $K(t)\equiv K\subset H$ be bounded, nonempty, closed, and convex. Choose $R_K>0$ with $K\subset \mathbb{B}_{R_K}(0)$, and write $\Pi_R:=\proj_{\mathbb{B}_{R_K}(0)}$. Let $A\in\mathcal L(H)$ be coercive with modulus $\gamma_A>0$, let $b\in H$, and let $g:H\to\R$ be Lipschitz continuous on $\mathbb B_{R_K}(0)$ and satisfy $g\ge M>0$ there. Define
\begin{equation*}
    \widetilde F(y):=g(\Pi_R y)\bigl(A\Pi_R y+b\bigr),\qquad y\in H.
\end{equation*}
With the feedback realization
\begin{equation*}
    h(t,x,u):=u-\widetilde F(Pu).
\end{equation*}
one has $F_{t,x}=\widetilde F$, and on $K$,
\begin{equation*}
    F_{t,x}(y)=g(y)(Ay+b),\qquad y\in K.
\end{equation*}
By Example~\ref{prop:weighted_strong_pseudomono}, $F_{t,x}$ is strongly pseudomonotone on $K$ with modulus $M\gamma_A$. Since $\Pi_R$ is nonexpansive and both $g$ and $y\mapsto Ay+b$ are bounded and Lipschitz on $\mathbb B_{R_K}(0)$, their product after composition with $\Pi_R$ is globally Lipschitz on $H$. Thus the remaining regularity in Assumption~\ref{ass:strong_pseudomono} also holds. This class need not be monotone: for $H=\R$, $A=1$, $b=0$, $g(y)=2+\sin y$, and $K=[0,3\pi/2]$,
\begin{equation*}
    F(y)=(2+\sin y)y,\qquad
    F'(y)=2+\sin y+y\cos y,
\end{equation*}
so $F'(\pi)=2-\pi<0$. Hence the frozen law is strongly pseudomonotone but not monotone on $K$, while the truncation above preserves the global Lipschitz requirement. 
\end{itemize}
\end{remark}

\begin{proposition}[\bf unique existence  of state-to-control maps]\label{lem:unique_u}
Suppose Assumptions~\ref{ass:operators} and \ref{ass:strong_pseudomono} hold. For any fixed $t \in [0,T]$ and $x \in H$, there exists a unique $u \in H$, denoted by $u_x(t)$, satisfying \eqref{eq:vi_u}.
\end{proposition}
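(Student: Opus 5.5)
We first transfer the problem to the transformed variable. Since $P$ is a bounded bijection with bounded inverse, $u\mapsto y=Pu$ is a bijection of $H$. Moreover, \eqref{eq:vi_u} holds for $u$ exactly when $y=Pu$ solves the canonical problem $\mathrm{VI}(K(t),F_{t,x})$ in \eqref{eq:vi_canonical}. Indeed, $u=P^{-1}y$, so that $h(t,x,u)-u=-F_{t,x}(y)$, and the inequality $\inner{h-u}{v-Pu}\le0$ becomes $\inner{F_{t,x}(y)}{v-y}\ge0$. It therefore suffices to show that $\mathrm{Sol}(K(t),F_{t,x})$ is a singleton, and then to set $u_x(t):=P^{-1}y$.

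\emph{Uniqueness.} This follows at once from Lemma~\ref{lem:vi_sensitivity} with $F_1=F_2=F_{t,x}$ and $C=K(t)$. If $y_1,y_2$ are two solutions, then $\gamma\norm{y_1-y_2}\le 0$, so $y_1=y_2$. Alternatively, apply strong pseudomonotonicity directly to $\inner{F(y_1)}{y_2-y_1}\ge0$ and combine it with $\inner{F(y_2)}{y_1-y_2}\ge0$.

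\emph{Existence.} This is the main step. We would invoke the existence theorem of Kim--Vuong--Khanh \cite{KimVuongKhanh2016}: a strongly pseudomonotone operator that is continuous on finite-dimensional subsets of a closed convex set in a Hilbert space has a solution of the VI. Lipschitz continuity of $F_{t,x}$ (Assumption~\ref{ass:strong_pseudomono}) gives norm continuity, hence continuity on finite-dimensional sections, so the weak continuity hypothesis holds.

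If we want a self-contained argument instead, we would proceed in three steps:
\begin{enumerate}
\item Fix $v_0\in K(t)$ and take $R$ large. For $v\in K(t)$ with $\norm{v-v_0}=R$, combine Lipschitz continuity with strong pseudomonotonicity to obtain a coercivity estimate. If $\inner{F(v_0)}{v-v_0}\ge0$, then $\inner{F(v)}{v-v_0}\ge\gamma R^2>0$. Otherwise $\inner{F(v_0)}{v-v_0}<0$; this case needs care, and it is handled via the contrapositive form of pseudomonotonicity.
\item Solve the VI on the bounded sets $K(t)\cap\mathbb B_R(v_0)\cap E$, where $E$ is finite-dimensional, using Brouwer (Hartman--Stampacchia).
\item Pass to the limit via a Minty-type lemma. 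This is available because strong pseudomonotonicity implies pseudomonotonicity, and $F$ is continuous.
\end{enumerate}
The delicate point is the coercivity/boundedness of approximate solutions. We expect this to be exactly what \cite{KimVuongKhanh2016} provides, so we would cite it rather than reprove it.

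Finally, set $u_x(t)=P^{-1}y^\ast$, where $y^\ast$ is the unique solution. By the equivalence established in the first paragraph, $u_x(t)$ is the unique solution of \eqref{eq:vi_u}.
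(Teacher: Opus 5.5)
Your proposal is correct and follows the paper's proof. The paper likewise passes to $y=Pu$, observes that Lipschitz continuity of $F_{t,x}$ yields the finite-dimensional weak continuity required by \cite[Theorem~2.1]{KimVuongKhanh2016}, obtains the unique solution from that theorem, and sets $u_x(t):=P^{-1}y_x(t)$. Your explicit check of the $u\leftrightarrow y$ equivalence and your separate uniqueness argument via Lemma~\ref{lem:vi_sensitivity} with $F_1=F_2$ are harmless additions. In your optional self-contained sketch, the boundedness step is not actually delicate: testing a solution $y$ of the truncated VI at $v_0$ and applying strong pseudomonotonicity gives $\norm{y-v_0}\le\norm{F_{t,x}(v_0)}/\gamma$ directly, so the case distinction in your step 1 is unnecessary.
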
 
\begin{proof} Fix $t\in[0,T]$ and $x\in H$. The Lipschitz continuity in
Assumption~\ref{ass:strong_pseudomono} implies the finite-dimensional weak
continuity required in \cite[Theorem~2.1]{KimVuongKhanh2016}, because weak
and norm topologies coincide on finite-dimensional subspaces. That theorem,
together with strong pseudomonotonicity and the closed convexity of $K(t)$,
therefore gives a unique solution $y_x(t)$ of \eqref{eq:vi_canonical}.
Since $P$ is invertible, $u_x(t):=P^{-1}y_x(t)$ is the unique solution of
\eqref{eq:vi_u}.
    
\end{proof}

We next introduce an additional assumption to ensure that the solution mapping established in Proposition~\ref{lem:unique_u} is Lipschitz continuous.
\begin{assumption}[\bf state-Lipschitz continuous  feedback]\label{ass:h_lipschitz_x} \rm \rm There exists  $\alpha_h>0$    such that
    for all $t \in [0, T]$ and all $x_1, x_2, u \in H$,
    the mapping $h$ satisfies:
    \begin{equation}\label{Lipassump}
     \norm{h(t, x_1, u) - h(t, x_2, u)} \le \alpha_h \norm{x_1 - x_2}. 
    \end{equation}
\end{assumption}

\begin{proposition}[\bf Lipschitz continuity of the state-to-control map]\label{lem:lipschitz_u}
In the setting of Proposition~\ref{lem:unique_u}, suppose in addition that   Assumption~\ref{ass:h_lipschitz_x} is satisfied. Then we have 
\begin{equation}\label{eq:LU_def}
    \norm{u_{x_1}(t)-u_{x_2}(t)}\le L\norm{x_1-x_2} \quad \text{for all }\; t\in [0,T], x_1, x_2 \in H, \quad \text{where }\; 
    L:=\frac{\alpha_h\norm{P^{-1}}}{\gamma}.
\end{equation}
\end{proposition}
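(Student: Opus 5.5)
The plan is to work in the transformed variable $y=Pu$ and apply the sensitivity estimate of Lemma~\ref{lem:vi_sensitivity} to two frozen variational inequalities on the same convex set $K(t)$.

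\emph{Setup.} Fix $t\in[0,T]$ and $x_1,x_2\in H$. By Proposition~\ref{lem:unique_u}, the points $y_i:=Pu_{x_i}(t)$ for $i=1,2$ are the unique solutions of $\mathrm{VI}(K(t),F_{t,x_i})$, with $F_{t,x_i}$ as in \eqref{eq:F_def}. This uses the equivalence between \eqref{eq:vi_u} and \eqref{eq:vi_canonical}.

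\emph{Sensitivity estimate.} Assumption~\ref{ass:strong_pseudomono} says that $F_1:=F_{t,x_1}$ is strongly pseudomonotone on $K(t)$ with modulus $\gamma$. Take $F_2:=F_{t,x_2}$, which Lemma~\ref{lem:vi_sensitivity} allows to be arbitrary. The lemma then gives
\[
\gamma\norm{y_1-y_2}\le \norm{F_{t,x_1}(y_2)-F_{t,x_2}(y_2)}.
\]
In the difference on the right, the $P^{-1}y_2$ terms cancel, leaving
\[
\norm{h(t,x_1,P^{-1}y_2)-h(t,x_2,P^{-1}y_2)}.
\]
Assumption~\ref{ass:h_lipschitz_x}, applied with the common argument $u=P^{-1}y_2$, bounds this by $\alpha_h\norm{x_1-x_2}$. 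Hence
\[
\norm{y_1-y_2}\le \frac{\alpha_h}{\gamma}\norm{x_1-x_2}.
\]

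\emph{Back to the original variable.} Since $u_{x_i}(t)=P^{-1}y_i$, we get
\[
\norm{u_{x_1}(t)-u_{x_2}(t)}\le \norm{P^{-1}}\norm{y_1-y_2}\le \frac{\alpha_h\norm{P^{-1}}}{\gamma}\norm{x_1-x_2},
\]
which is \eqref{eq:LU_def}.

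\emph{Where care is needed.} There is no real obstacle here. The two points to check are that the strong pseudomonotonicity constant $\gamma$ is uniform in $(t,x)$, which Assumption~\ref{ass:strong_pseudomono} provides, and that $F_1$ and $F_2$ are compared at the same argument $y_2$, so the Lipschitz bound in $x$ applies directly.
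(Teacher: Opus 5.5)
Your proof is correct and follows the paper's argument essentially verbatim. You pass to $y_i=Pu_{x_i}(t)$, apply Lemma~\ref{lem:vi_sensitivity} with $F_1=F_{t,x_1}$ and $F_2=F_{t,x_2}$ compared at $y_2$, bound the resulting $h$-difference by $\alpha_h\norm{x_1-x_2}$ using Assumption~\ref{ass:h_lipschitz_x}, and return to $u$ through the factor $\norm{P^{-1}}$.
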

\begin{proof}
Fix $t \in [0,T]$. Let $y_i:=y_{x_i}(t)
:=Pu_{x_i}(t)$ for  $i=1,2$. Then $y_1$ and $y_2$ solve $\mathrm{VI}(K(t),F_{t,x_1})$ and $\mathrm{VI}(K(t),F_{t,x_2})$, respectively. Applying Lemma~\ref{lem:vi_sensitivity} with $C=K(t)$, $F_1=F_{t,x_1}$, and $F_2=F_{t,x_2}$, we have 
\begin{align*}
    \gamma\norm{y_1-y_2}
    &\le\norm{F_{t,x_1}(y_2)-F_{t,x_2}(y_2)}\\
    &=\norm{h(t,x_2,P^{-1}y_2)-h(t,x_1,P^{-1}y_2)}
    \le \alpha_h \norm{x_1-x_2}, 
\end{align*}
which implies that $\norm{y_1-y_2}\le(\alpha_h/\gamma)\norm{x_1-x_2}$, and thus \eqref{eq:LU_def} is verified due to the invertibility of $P$. 

\end{proof}

The next result shows that the implicitly defined state-to-control mapping $u$, associated with continuous trajectories $x$, is itself continuous under the following additional assumption.
\begin{assumption}[\bf time regularity assumptions]\label{ass:time_regularity} \textbf{} \rm 
\begin{itemize} 
    \item[\bf (i)] For every $R\ge 0$, there exists a continuous function $\omega_{K,R}: \R_+ \to \R_+$ with $\displaystyle\lim_{r \to 0^+}\omega_{K,R}(r) = 0$  such that
    \begin{equation}
        \norm{\proj_{K(t_1)}(z) - \proj_{K(t_2)}(z)} \le \omega_{K,R}(|t_1 - t_2|), \quad \forall t_1, t_2 \in [0, T], \ \forall z \in \mathbb{B}_R(0).
    \end{equation}
    \item[\bf (ii)] For every $R\ge 0$, there exists a continuous function $\omega_{h,R}: \R_+ \to \R_+$ with $\displaystyle\lim_{r \to 0^+}\omega_{h,R}(r) = 0$  such that
    \begin{equation}
        \norm{h(t_1, x, u) - h(t_2, x, u)} \le \omega_{h,R}(|t_1 - t_2|),
    \end{equation}
    for all $t_1, t_2 \in [0, T]$ and all $x,u\in \mathbb{B}_R(0)$.
\end{itemize}
\end{assumption}

We begin with the following technical lemma, which will be used in the proof of the next result. 
\begin{lemma}\label{techni}
Let $\gamma>0$ and let $\{a_m\}$, $\{b_m\}$, and $\{s_m\}$ be sequences of  real numbers satisfying
$\gamma s_m^2 \le a_m + b_m s_m$ 
for every $m \in \N$. If
$a_m \to 0$ and 
$b_m \to 0$ as $m \to \infty$ 
then $
s_m \to 0$ as $m \to \infty.$ 
\end{lemma}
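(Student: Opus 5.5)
Since $\gamma s_m^2 \le a_m + b_m s_m$ is a quadratic inequality in $s_m$, I will solve it for $s_m$ and get an explicit upper bound by a quantity tending to zero. I assume $s_m\ge 0$ (in the intended application $s_m$ is a norm). Without this the statement fails: take $a_m=b_m=0$ and $s_m=0$, or $a_m=1/m$ with $s_m=(-1)^m$ constant magnitude is excluded, but $s_m$ negative large is excluded too. I will therefore work with $s_m\ge0$, or note that the inequality only controls $|s_m|$ when $b_m s_m\le |b_m|\,|s_m|$, which always holds. So it suffices to treat $\gamma |s_m|^2\le a_m+|b_m|\,|s_m|$ for the nonnegative quantity $\sigma_m:=|s_m|$.

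\textbf{Step 1.} Note that $a_m\ge \gamma\sigma_m^2-|b_m|\sigma_m$. This bounds $a_m$ from below by $-b_m^2/(4\gamma)$, so no sign issue arises. I will absorb the linear term by Young's inequality:
\[
|b_m|\sigma_m\le \tfrac{\gamma}{2}\sigma_m^2+\tfrac{1}{2\gamma}b_m^2 .
\]
\textbf{Step 2.} Substituting this into the inequality gives
\[
\tfrac{\gamma}{2}\sigma_m^2\le a_m+\tfrac{1}{2\gamma}b_m^2 ,
\]
so that
\[
\sigma_m^2\le \tfrac{2}{\gamma}\,a_m+\tfrac{1}{\gamma^2}b_m^2 .
\]
\textbf{Step 3.} The right-hand side tends to $0$, since $a_m\to0$ and $b_m\to0$. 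Hence $\sigma_m\to0$, and therefore $s_m\to0$.

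An alternative route is the quadratic formula, which gives $\sigma_m\le \bigl(|b_m|+\sqrt{b_m^2+4\gamma (a_m)_+}\bigr)/(2\gamma)$ and reaches the same conclusion. There is no real obstacle here. The only point needing care is the sign convention on $s_m$, and passing to $|s_m|$ handles it.
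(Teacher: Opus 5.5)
Your argument is correct and is essentially the paper's proof: you absorb the cross term via Young's inequality $b_m s_m\le\frac{\gamma}{2}s_m^2+\frac{b_m^2}{2\gamma}$, which gives $s_m^2\le\frac{2}{\gamma}a_m+\frac{b_m^2}{\gamma^2}\to0$. Passing to $\sigma_m=|s_m|$ is harmless but unnecessary, and your aside that the lemma fails without $s_m\ge0$ is mistaken: Young's inequality holds for all real $b_m,s_m$, so the statement is true exactly as written (as your own reduction in fact shows).
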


\begin{proof}
Using the fact that 
$$
\left(\sqrt{\gamma} s_m - \frac{b_m}{\sqrt{\gamma}} \right)^2 \geq 0 \quad \text{for all }\; m \in \N, 
$$
we obtain 
$$ 
b_m s_m
\leq 
\frac{\gamma}{2}s_m^2+\frac{b_m^2}{2\gamma} \quad \text{for all }\; m \in \N, 
$$
which implies that 
$$
\gamma s_m^2 \leq a_m + b_m s_m 
\leq
a_m+\frac{\gamma}{2}s_m^2+\frac{b_m^2}{2\gamma} \quad \text{for all }\; m \in \N.
$$
Hence,
$$
s_m^2
\le
\frac{2}{\gamma}a_m+\frac{b_m^2}{\gamma^2}  \quad \text{for all }\; m \in \N.
$$
Since $a_m\to0$ and $b_m\to0$, the right-hand side converges to zero, and thus  $s_m^2\to 0$. The proof is complete. 

\end{proof}

\begin{proposition}[\bf continuity of the implicit variable along continuous trajectories]\label{lem:continuity_u} Suppose that Assumptions \ref{ass:operators}, \ref{ass:strong_pseudomono}, \ref{ass:h_lipschitz_x}, and \ref{ass:time_regularity} hold. Let $x \in C([0,T];H)$, and define $u:[0,T]\to H$ by
$$
u(t):=u_{x(t)}(t) \quad \text{for all }\; t \in [0,T],
$$
where $u_{x(t)}(t)$ denotes the unique mapping corresponding to $x(t) \in H$ and $t\in[0,T]$, as constructed in Proposition \ref{lem:unique_u}. Then $u \in C([0,T];H).$ 
\end{proposition}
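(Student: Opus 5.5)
The plan is to prove sequential continuity. Fix $t\in[0,T]$ and a sequence $t_m\to t$ in $[0,T]$. Set $x_m:=x(t_m)$, $y_m:=Pu(t_m)$ and $y:=Pu(t)$. Write $F_m:=F_{t_m,x_m}$ and $F:=F_{t,x(t)}$. Then $y_m$ solves $\mathrm{VI}(K(t_m),F_m)$ and $y$ solves $\mathrm{VI}(K(t),F)$. Since $\|u(t_m)-u(t)\|\le\|P^{-1}\|\|y_m-y\|$, it suffices to show $y_m\to y$. Lemma~\ref{lem:vi_sensitivity} cannot be applied directly because the constraint sets differ. I would therefore move points between the two sets by projection, using
\[
z_m:=\proj_{K(t_m)}(y)\in K(t_m),\qquad w_m:=\proj_{K(t)}(y_m)\in K(t).
\]
Since $y\in K(t)$, we have $y=\proj_{K(t)}(y)$. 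Assumption~\ref{ass:time_regularity}(i) then gives $\|z_m-y\|\le\omega_{K,R}(|t_m-t|)\to0$ for any $R\ge\|y\|$.

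\emph{Step 1: key inequality.} Test the VI for $y_m$ with $v=z_m$ to get $\inner{F_m(y_m)}{z_m-y_m}\ge0$. Both points lie in $K(t_m)$, so strong pseudomonotonicity (Assumption~\ref{ass:strong_pseudomono}) gives, with $s_m:=\|z_m-y_m\|$,
\[
\gamma s_m^2\le \inner{F_m(z_m)}{z_m-y_m}.
\]
I then split $F_m(z_m)=F(y)+\bigl(F_m(z_m)-F_m(y)\bigr)+\bigl(F_m(y)-F(y)\bigr)$. The Lipschitz bound gives $\|F_m(z_m)-F_m(y)\|\le L_F\|z_m-y\|\to0$. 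From \eqref{eq:F_def},
\[
F_m(y)-F(y)=h(t,x(t),P^{-1}y)-h(t_m,x_m,P^{-1}y).
\]
Assumptions~\ref{ass:h_lipschitz_x} and~\ref{ass:time_regularity}(ii) bound its norm by $\alpha_h\|x_m-x(t)\|+\omega_{h,R}(|t_m-t|)\to0$. Here $R$ is chosen so that the ball contains the compact set $x([0,T])$ and the point $P^{-1}y$. Hence $\gamma s_m^2\le\inner{F(y)}{z_m-y_m}+b_ms_m$ with $b_m\to0$.

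\emph{Step 2: boundedness (the main obstacle).} The remaining term $\inner{F(y)}{z_m-y_m}$ needs the VI at time $t$, and that requires controlling $\|w_m-y_m\|$. This can be done through Assumption~\ref{ass:time_regularity}(i) only if $\{y_m\}$ is bounded, which is not known a priori. I would obtain it first from the crude estimate $\gamma s_m^2\le\|F_m(z_m)\|s_m$, i.e.\ $s_m\le\|F_m(z_m)\|/\gamma$. The right-hand side is bounded, since $\|F_m(z_m)\|\le\|F(y)\|+L_F\|z_m-y\|+\|F_m(y)-F(y)\|$ and all three terms are bounded. As $z_m\to y$, $\{y_m\}$ is bounded, say by $R'$.

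\emph{Step 3: conclusion.} Decompose
\[
\inner{F(y)}{z_m-y_m}=\inner{F(y)}{z_m-y}+\inner{F(y)}{y-w_m}+\inner{F(y)}{w_m-y_m}.
\]
The middle term is $\le0$, by the VI for $y$ tested with $v=w_m\in K(t)$. The first term is at most $\|F(y)\|\,\omega_{K,R}(|t_m-t|)$. For the last term, $y_m=\proj_{K(t_m)}(y_m)$, so
\[
\|w_m-y_m\|=\|\proj_{K(t)}(y_m)-\proj_{K(t_m)}(y_m)\|\le\omega_{K,R'}(|t_m-t|).
\]
Thus $\gamma s_m^2\le a_m+b_ms_m$ with $a_m,b_m\to0$, and Lemma~\ref{techni} yields $s_m\to0$. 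Therefore $\|y_m-y\|\le s_m+\|z_m-y\|\to0$, which gives $u(t_m)\to u(t)$, so $u\in C([0,T];H)$.
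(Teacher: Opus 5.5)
Your proposal is correct and follows essentially the paper's route. Both arguments transfer points between the sets with $\tilde y_m=\proj_{K(t_m)}(y)$ and $\hat y_m=\proj_{K(t)}(y_m)$, first obtain boundedness of $\{y_m\}$ from the crude strong-pseudomonotone bound, and then close the inequality $\gamma s_m^2\le a_m+b_m s_m$ with Lemma~\ref{techni}. The only difference is that you compare the operators at the fixed point $y$ through $F_m(y)$, whereas the paper compares them at $\tilde y_m$ through $F_{t,x(t)}(\tilde y_m)$; your choice slightly streamlines the bookkeeping, since it removes the $2L_F\norm{\tilde y_m-y}^2$ term and the need for a radius containing $P^{-1}\tilde y_m$.
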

\begin{proof} To show that $u \in C([0,T]; H)$, we pick any $t \in [0,T],$ and an arbitrary sequence $t_m \to t$ as $m \to \infty.$  We need to verify that $u(t_m) \to u(t)$ in $H$. To proceed, for each $m \in \N$, we define 
$$
  y:=Pu(t), \quad y_m:=Pu(t_m), \quad   \tilde y_m:=\proj_{K(t_m)}(y),  \quad \text{and }\; \hat y_m:=\proj_{K(t)}(y_m).
$$
Then $y\in K(t)$ solves $\mathrm{VI}(K(t),F_{t,x(t)})$, while $y_m\in K(t_m)$ solves $\mathrm{VI}(K(t_m),F_{t_m,x(t_m)})$ for all $m \in \N$. We divide the proof into the following claims.

\medskip \noindent 
{\bf Claim 1. } {\em The sequence $\{\tilde y_m\}$ converges to $y$ as $m \to \infty$.}

\medskip 
Indeed, since $y=\proj_{K(t)}(y)$ and $y$ is fixed, Assumption~\ref{ass:time_regularity}{\bf (i)}, with $R:=\norm{y}$, implies that
\begin{equation}\label{eq:tilde_y_converges}
    \norm{\tilde y_m-y}
    =\norm{\proj_{K(t_m)}(y)-\proj_{K(t)}(y)}
    \le\omega_{K,R}(|t_m-t|),
\end{equation}
for all sufficiently large $m \in \N$, which verifies Claim 1.

\medskip \noindent 
{\bf Claim 2. } {\em The sequence $\{y_m\}$ is bounded, and we have}
\begin{equation}\label{eq:first_term_continuity}
 \norm{[F_{t_m,x(t_m)}-F_{t,x(t)}](\tilde y_m)}  \to 0  \quad \text{as }\; m \to \infty. 
\end{equation} 
Since $y_m \in \mathrm{Sol}(K(t_m),F_{t_m,x(t_m)})$ for all $m \in \N$, we have 
$$
 \inner{F_{t_m,x(t_m)}(y_m)}{\tilde y_m-y_m}\ge 0 \quad \text{for all }\; m \in \N. 
$$
Combining the latter with the  strong pseudomonotonicity of each $F_{t_m, x(t_m)}$ with modulus $\gamma>0$ yields 
\begin{equation}\label{eq:continuity_spm_start}
    \gamma\norm{\tilde y_m-y_m}^2
    \le \inner{F_{t_m,x(t_m)}(\tilde y_m)}{\tilde y_m-y_m}
    \le \norm{F_{t_m,x(t_m)}(\tilde y_m)}\,\norm{\tilde y_m-y_m} \quad \text{for all }\; m \in \N, 
\end{equation}
which implies that
\begin{equation}\label{eq:ym_local_bound_by_F}
    \norm{\tilde y_m-y_m}
    \le \frac{1}{\gamma}\norm{F_{t_m,x(t_m)}(\tilde y_m)}\quad \text{for all }\; m \in \N.  
\end{equation}
Because $\tilde y_m\to y$ and $x(t_m)\to x(t)$, the sequences $\{\tilde y_m\}$, $\{P^{-1}\tilde y_m\}$, and $\{x(t_m)\}$ are bounded. Choose a radius $R_0$ containing these points together with $x(t)$ and $P^{-1}y$. Then using Assumptions~\ref{ass:strong_pseudomono}, \ref{ass:h_lipschitz_x}, and \ref{ass:time_regularity}, we have 
\begin{align*}
    \norm{F_{t_m,x(t_m)}(\tilde y_m)}
    &\le \norm{P^{-1}\tilde y_m}
       +\norm{h(t_m,x(t_m),P^{-1}\tilde y_m)-h(t_m,x(t),P^{-1}\tilde y_m)}\\
    &\quad +\norm{h(t_m,x(t),P^{-1}\tilde y_m)-h(t,x(t),P^{-1}\tilde y_m)}
       +\norm{h(t,x(t),P^{-1}\tilde y_m)}\\
    &\le \norm{P^{-1}}\norm{\tilde y_m}
       +\alpha_h\norm{x(t_m)-x(t)}
       +\omega_{h,R_0}(|t_m-t|)\\
    &\quad +\norm{h(t,x(t),P^{-1}y)}
       +\norm{F_{t,x(t)}(\tilde y_m)-F_{t,x(t)}(y)}
       +\norm{P^{-1}(\tilde y_m-y)}\\
    &\le \norm{P^{-1}}\norm{\tilde y_m}
       +\alpha_h\norm{x(t_m)-x(t)}
       +\omega_{h,R_0}(|t_m-t|)\\
    &\quad +\norm{h(t,x(t),P^{-1}y)}
       +(L_F+\norm{P^{-1}})\norm{\tilde y_m-y},
\end{align*}
for all $m \in \N$,  which implies that the sequence $ \left\{\norm{F_{t_m,x(t_m)}(\tilde y_m)}\right\}$ is bounded. Combining this with \eqref{eq:ym_local_bound_by_F}, we deduce that $\{\tilde y_m-y_m\}$ is bounded, and then $\{y_m\}$ is bounded as well. We may consequently choose $R_h >0$ such that for all sufficiently large $m \in \N$, the points $y$, $\tilde y_m$, $y_m$, $P^{-1}\tilde y_m$, $P^{-1}y_m$,   $x(t_m)$ and $x(t)$ are in   $\mathbb{B}_{R_h}(0)$. 
Due to Assumptions \ref{ass:h_lipschitz_x} and \ref{ass:time_regularity}, we have the following estimates 
\begin{align*}
\norm{[F_{t_m,x(t_m)}-F_{t,x(t)}](\tilde y_m)}  &=\norm{h(t,x(t),P^{-1}\tilde y_m)-h(t_m,x(t_m),P^{-1}\tilde y_m) } \\
& \leq \norm{h(t,x(t),P^{-1}\tilde y_m)-h(t,x(t_m),P^{-1}\tilde y_m) }\\ & \qquad    + \norm{h(t,x(t_m),P^{-1}\tilde y_m)-h(t_m,x(t_m),P^{-1}\tilde y_m) } \\
& \leq \alpha_h \norm{x(t)-x(t_m)}+ \omega_{h,R_h}(|t_m -t|) 
\end{align*}
for all $m \in \N$. Combining the latter with the fact that $x \in C([0,T];H)$, $t_m \to t$, and $\displaystyle\lim_{r \to 0^+}\omega_{h,R_h}(r) = 0$, we deduce that $ \norm{[F_{t_m,x(t_m)}-F_{t,x(t)}](\tilde y_m)}  \to 0$ as $m \to \infty$. 

\medskip \noindent 
{\bf Claim 3. } {\em The sequence $\{y_m - \tilde y_m \}$  converges to $0$ as $m \to \infty$.}

\medskip 
Since $y_m\in K(t_m)$ and $\{y_m\} \subset \mathbb{B}_{R_h}(0)$, Assumption~\ref{ass:time_regularity}{\bf (i)} gives us the estimates 
\begin{equation}\label{eq:hat_y_close}
    \norm{\hat y_m-y_m}
    =\norm{\proj_{K(t)}(y_m)-\proj_{K(t_m)}(y_m)}
    \le\omega_{K,R_h}(|t_m-t|),
\end{equation}
for all sufficiently large $m \in \N$, and thus $\hat{y}_m - y_m \to 0$. Since  $y\in \mathrm{Sol}(K(t),F_{t,x(t)})$, we have 
\begin{equation*}
    \inner{F_{t,x(t)}(y)}{\hat y_m-y}\ge 0 \quad \text{for all }\; m \in \N,
\end{equation*}
which implies that 
\begin{equation}\label{eq:y_to_ym_term}
    \inner{F_{t,x(t)}(y)}{y-y_m}
    =  \inner{F_{t,x(t)}(y)}{y-\hat{y}_m} + \inner{F_{t,x(t)}(y)}{\hat{y}_m-y_m} \leq \norm{F_{t,x(t)}(y)}\,\norm{\hat y_m-y_m}  \quad \text{for all }\; m \in \N.
\end{equation}
Moreover, Assumption~\ref{ass:strong_pseudomono} gives us the following estimates
\begin{align*}
    \inner{F_{t,x(t)}(\tilde y_m)-F_{t,x(t)}(y)}{y-y_m}
    & \leq L_F\norm{\tilde y_m-y}\norm{y-y_m}\nonumber \\
    & \leq L_F\norm{\tilde y_m-y} \left(\norm{y-\tilde y_m}+\norm{\tilde y_m-y_m}\right)\nonumber\\
    & \leq L_F\norm{\tilde y_m-y}^2 + L_F \norm{\tilde y_m-y}\norm{\tilde y_m-y_m},
\end{align*}
for all $m \in \N$. Combining this with \eqref{eq:y_to_ym_term}, we have  
\begin{equation}\label{eq:lipschitz_F_term}
\inner{F_{t,x(t)}(\tilde y_m) }{y-y_m} \leq \norm{F_{t,x(t)}(y)}\,\norm{\hat y_m-y_m} +  L_F\norm{\tilde y_m-y}^2 + L_F \norm{\tilde y_m-y}\norm{\tilde y_m-y_m}
\end{equation}
By Assumption \ref{ass:strong_pseudomono}, we have 
\begin{align}\label{eq:innerFxt}
  \inner{F_{t,x(t)}(\tilde y_m)}{\tilde y_m-y}
 &   \leq \norm{F_{t,x(t)}(\tilde y_m)}\,\norm{\tilde y_m-y} \nonumber \\
 & \leq \left(\norm{F_{t,x(t)}(\tilde y_m) - F_{t,x(t)}(y) } + \norm{F_{t,x(t)}(y)} \right)   \norm{\tilde y_m-y} \nonumber \\
 & \leq L_F \norm{\tilde y_m-y}^2 + \norm{F_{t,x(t)}(y)}\norm{\tilde y_m-y} 
\end{align}
for all $m \in \N$. Combining \eqref{eq:lipschitz_F_term} and \eqref{eq:innerFxt}, we have 
$$
\inner{F_{t,x(t)}(\tilde y_m)}{\tilde y_m-y_m} \leq \norm{F_{t,x(t)}(y)}\left(\norm{\hat y_m-y_m}+ \norm{\tilde y_m -y} \right) +  2L_F\norm{\tilde y_m-y}^2 + L_F \norm{\tilde y_m-y}\norm{\tilde y_m-y_m}
$$
for all $m \in \N.$ Combining the latter with \eqref{eq:continuity_spm_start}, we have 
\begin{align}
    \gamma\norm{\tilde y_m-y_m}^2 & \leq \dotproduct{F_{t_m,x(t_m)}(\tilde y_m) - F_{t,x(t)}(\tilde y_m), \tilde y_m - y_m} + \dotproduct{F_{t,x(t)} (\tilde y_m), \tilde y_m - y_m} \notag \\
    &\le  \norm{[F_{t_m,x(t_m)}-F_{t,x(t)}](\tilde y_m)}\norm{\tilde y_m-y_m} + \inner{F_{t,x(t)}(\tilde y_m)}{\tilde y_m-y_m} \notag\\
    &\leq a_m + b_m \norm{\tilde y_m - y_m} \label{eq:continuity_split},
\end{align}
for all $m \in \N$, where 
$$
a_m:= \norm{F_{t,x(t)}(y)}\left(\norm{\hat y_m-y_m}+ \norm{\tilde y_m -y} \right) +  2L_F\norm{\tilde y_m-y}^2 ,
$$
and 
$$
 b_m:= L_F\norm{\tilde{y}_m - y } +  \norm{[F_{t_m,x(t_m)}-F_{t,x(t)}](\tilde y_m)}.
$$
Putting $s_m:= \norm{y_m - \tilde{y}_m}$ and combining with Claims 1, 2 and \eqref{eq:continuity_split}, we deduce that 
$$
\gamma s_m^2 \leq a_m + b_m s_m \quad \forall m \in \N, \; a_m \to 0, \; \text{and }\; b_m \to 0.
$$
Applying Lemma \ref{techni}, we can conclude that $s_m \to 0$, or $y_m - \tilde y_m \to 0$  as $m \to \infty$. By combining this with Claim 1, we must have $y_m \to y$ as $m \to \infty$, which allows us to say that $u(t_m)\to u(t)$ since $P$ is invertible. Therefore,  $u\in C([0,T];H)$, which completes the proof.  
\end{proof}

\medskip 
As demonstrated by the preceding analysis, Problem \ref{prob:primal} can be decoupled by reformulating the normal cone inclusion as a parameterized variational inequality and exploiting the fundamental properties of variational inequalities. Alternatively, the system can also be decoupled by expressing the normal cone inclusion as an equivalent Lipschitz equation. Indeed, by applying the projection--normal cone equivalence established in Proposition \ref{thm:equivalence_prelim}, Problem \ref{prob:primal} can be reformulated as the following parameterized projection system.
\begin{proposition}[\bf parameterized projection formulation]\label{thm:equivalent}
Suppose Assumption  \ref{ass:operators}   holds, and let $\rho>0$ be a positive parameter. A pair $(x, u)\in C([0,T];H)\times C([0,T];H)$ is a mild solution pair to Problem \ref{prob:primal} if and only if it satisfies
\begin{numcases}{}
    x(t) = e^{At}x_0 + \int_0^t e^{A(t-s)} f(s, x(s), u(s)) \, \ud s, \quad t \in [0, T], \label{eq:explicit_x} \\
    P u(t) = \proj_{K(t)} \big( P u(t) + \rho (h(t, x(t), u(t)) - u(t))\big), \quad \text{for a.e. } t \in (0, T). \label{eq:explicit_u}
\end{numcases}
\end{proposition}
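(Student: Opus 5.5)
The proof splits into the state equation and the algebraic relation; only the latter needs an argument. The mild state equation \eqref{eq:state_mild} and \eqref{eq:explicit_x} are literally identical. The regularity class $C([0,T];H)\times C([0,T];H)$ is the same in both formulations. Hence the whole task is to show that, for a.e.\ $t\in(0,T)$ and the fixed values $x(t),u(t)\in H$, the inclusion \eqref{eq:inclusion} holds if and only if the projection equation \eqref{eq:explicit_u} holds. Both are required only for a.e.\ $t$, so pointwise equivalence on a common full-measure set (or at every $t$) suffices.

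Fix such a $t$ and write $C:=K(t)$, $y:=Pu(t)$ and $\xi:=h(t,x(t),u(t))-u(t)$. By Assumption~\ref{ass:operators}(iii), $C$ is nonempty, closed and convex, and $y,\xi\in H$ are well defined.

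First, \eqref{eq:inclusion} says $u(t)\in h(t,x(t),u(t))-N_C(y)$. This means there is $\eta\in N_C(y)$ with $u(t)=h(t,x(t),u(t))-\eta$, i.e.\ $\eta=\xi$. So \eqref{eq:inclusion} is equivalent to $\xi\in N_C(y)$. This includes the implicit requirement $y\in C$, since $N_C(y)=\emptyset$ for $y\notin C$.

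Second, apply Proposition~\ref{thm:equivalence_prelim} with this $C$, the given $\rho>0$, and the pair $(y,\xi)$. It gives $\xi\in N_C(y)$ if and only if $y=\proj_C(y+\rho\xi)$, which is exactly \eqref{eq:explicit_u}. If one prefers not to cite it, Lemma~\ref{lem:proj_vi_char} with $z=y+\rho\xi$ gives it directly. Indeed, $y=\proj_C(z)$ if and only if $y\in C$ and $\inner{\rho\xi}{v-y}\le0$ for all $v\in C$. Dividing by $\rho>0$, this is the definition \eqref{def:normal_cone} of $\xi\in N_C(y)$.

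Chaining the two equivalences on the full-measure set gives the claim. There is no real obstacle. The only points needing care are bookkeeping: the domain convention $N_C(y)=\emptyset$ off $C$, which makes the membership $Pu(t)\in K(t)$ automatic in both directions because the projection takes values in $C$; and the observation that $\rho$ is arbitrary but fixed, since the normal cone is a cone and so the equivalence does not depend on $\rho$.
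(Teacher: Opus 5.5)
Your proposal is correct and takes the same route as the paper. The state equations coincide. The algebraic relation is handled pointwise by rewriting the inclusion as $h(t,x(t),u(t))-u(t)\in N_{K(t)}(Pu(t))$ and applying Proposition~\ref{thm:equivalence_prelim} at each $t$, which carries over to the a.e.\ statements; your unpacking via Lemma~\ref{lem:proj_vi_char} just makes explicit the step the paper cites.
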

\begin{proof} It follows from Proposition \ref{thm:equivalence_prelim} that  for any $t \in (0,T)$, the normal cone inclusion  
$$
 h(t, x(t), u(t)) -u(t) \in  N_{K(t)}(P u(t))
$$
is equivalent to the projection equation
$$
 P u(t) = \proj_{K(t)}\big(P u(t) + \rho (h(t,x(t),u(t)) - u(t))\big) 
$$
for any $\rho>0$. The inclusion in Problem \ref{prob:primal} is imposed for a.e. $t\in(0,T)$, and the pointwise equivalence just proved applies at each such time; hence the normal cone inclusion and the projection equation are equivalent as a.e. statements, which completes the proof. 

\end{proof}

For fixed $(t,x)$, the change of variables $y=Pu$ turns
\eqref{eq:explicit_u} into
\begin{equation}\label{fixpoint2}
    y=\mathcal T_{t,x}(y):=
    \proj_{K(t)}\big((I-\rho P^{-1})y+\rho h(t,x,P^{-1}y)\big).
\end{equation}
By Lemma~\ref{lem:non_expansive},
\begin{equation}\label{eq:estiT}
  \norm{\mathcal T_{t,x}(y_1)-\mathcal T_{t,x}(y_2)}
    \le \norm{(I-\rho P^{-1})(y_1-y_2)}
        +\rho\norm{h(t,x,P^{-1}y_1)-h(t,x,P^{-1}y_2)}
\end{equation}
for all $y_1, y_2 \in H.$ This motivates the following assumption.
\begin{assumption}[\bf Lipschitz and smallness assumptions] \label{ass:lipschitz_smallness}  \rm There exists $\beta_h>0$ such that
    for all $t \in [0, T]$ and all $x, u_1, u_2 \in H$,
    the mapping $h$ satisfies:
    \begin{equation}\label{Lipassump2}
     \norm{h(t, x, u_1) - h(t, x, u_2)} \leq \beta_h \norm{u_1 - u_2},
    \end{equation}
    and there exists a parameter $\rho>0$ such that
    \begin{equation}\label{smalless}
        q_\rho:=\norm{I-\rho P^{-1}}+\rho \beta_h\norm{P^{-1}}<1.
    \end{equation}
\end{assumption}
Under Assumption~\ref{ass:lipschitz_smallness}, \eqref{eq:estiT} gives
\begin{equation*}
 \norm{\mathcal T_{t,x}(y_1)-\mathcal T_{t,x}(y_2)}
 \le q_\rho\norm{y_1-y_2}.
\end{equation*}
Thus Banach's theorem yields the same unique frozen solution as the VI route.
The relation between the two verification mechanisms is recorded next.

\begin{remark}[\bf comparison of the two algebraic hypotheses]\rm
Under Assumption~\ref{ass:operators}, Assumption
\ref{ass:lipschitz_smallness} is strictly stronger than Assumption
\ref{ass:strong_pseudomono}. Indeed, \eqref{smalless} is equivalent to
$\beta_h\norm{P^{-1}}<m_P:=\min\sigma(P^{-1})$. If the latter inequality
fails, then
\begin{equation*}
    q_\rho
    \ge \abs{1-\rho m_P}+\rho \beta_h\norm{P^{-1}}
    \ge 1-\rho m_P+\rho \beta_h\norm{P^{-1}}
    \ge1
\end{equation*}
for every $\rho>0$. Conversely, suppose that $\beta_h \norm{P^{-1}} < m_P$. Putting $M_P:=\norm{P^{-1}}=\max\sigma(P^{-1})$ and   $\rho:=2/(m_P+M_P)$ gives us 
\begin{equation*}
    q_{\rho}
    \leq 
    \frac{M_P-m_P+ 2 \beta_h M_P}{M_P+m_P} < \frac{M_P-m_P+2m_P}{M_P+m_P}  = 1.
\end{equation*}
Hence \eqref{smalless} holds exactly under the stated spectral inequality,
which implies Assumption~\ref{ass:strong_pseudomono} by
Remark~\ref{remark:assstrong}{\bf (i)}. The conclusions of Propositions
\ref{lem:unique_u}, \ref{lem:lipschitz_u}, and \ref{lem:continuity_u}
therefore remain valid under the stronger smallness hypothesis.
\end{remark}

\section{Global Existence and Uniqueness via Semigroup Theory}\label{sec:globalex} 
After decoupling the algebraic inclusion \eqref{eq:inclusion} in Problem~\ref{prob:primal}, we can write $
u(t)=u_{x(t)}(t)$ for $t\in[0,T]$,
by the uniqueness result established in Proposition~\ref{lem:unique_u}. Substituting this  into \eqref{eq:state_mild}, we obtain a reduced nonlinear evolution equation, whose existence theory can be treated by standard semigroup arguments as in~\cite{JV2019}. 

\medskip 
First, we define the {\em reduced nonlinear mapping} $\tilde{f}: [0, T] \times H \to H$ as follows:
\begin{equation}\label{reduced}
    \tilde{f}(t, x) = f(t, x, u_x(t)) \quad \text{for all }\; t \in [0,T], x \in H,
\end{equation}
where $f$ is defined in Assumption \ref{ass:operators}, and $u_x : (t,x) \mapsto u_x(t)$ is the mapping constructed in Proposition \ref{lem:unique_u}. The global existence and uniqueness of solutions to Problem~\ref{prob:primal} are established in this section under the following additional assumption. 
\begin{assumption}[\bf Lipschitz continuity of the state equation]\label{ass:f_lipschitz} \rm 
There exist constants $\alpha_{f}>0$ and $\beta_{f}>0$ such that, for all $t\in[0,T]$ and all $x_1,x_2,u_1,u_2\in H$,
\begin{equation*}
    \norm{f(t,x_1,u_1)-f(t,x_2,u_2)}
    \le \alpha_{f}\norm{x_1-x_2}+\beta_{f}\norm{u_1-u_2}.
\end{equation*}
\end{assumption}
\begin{remark}[\bf A nontrivial class of admissible data] \rm
The following affine data provide an admissible class under the smallness
condition:
\begin{equation*}
f(t,x,u)=F_xx+F_uu+b_f(t),
\qquad
h(t,x,u)=H_xx+H_uu+b_h(t),
\end{equation*}
where $F_x,F_u,H_x,H_u\in\mathcal L(H)$, $b_f\in L^1(0,T;H)$,
$b_h$ is uniformly continuous, and $\proj_{K(t)}$ varies uniformly on bounded
sets, provided that
\begin{equation*}
 \norm{H_u}\norm{P^{-1}}<m_P.
\end{equation*}
This inequality verifies the algebraic hypothesis through
Assumption~\ref{ass:lipschitz_smallness} and
Remark~\ref{remark:assstrong}{\bf (i)}. Separately,
Remark~\ref{remark:assstrong}{\bf (ii)} gives a nonmonotone admissible
frozen algebraic class satisfying Assumption~\ref{ass:strong_pseudomono}
directly; it is not part of the displayed affine smallness example.
 
\end{remark}

Under Assumption \ref{ass:f_lipschitz} together with previous Assumptions \ref{ass:operators}, \ref{ass:strong_pseudomono}, \ref{ass:h_lipschitz_x}, \ref{ass:time_regularity}, the reduced nonlinear mapping \eqref{reduced} is a Carath\'eodory mapping and is Lipschitz continuous with respect to the state variables. This result is established in the following lemma.

\begin{lemma}[\bf reduced mappings are Carath\'eodory and Lipschitz continuous] \label{lem:lipschitz_reduced} Suppose that Assumptions \ref{ass:operators}, \ref{ass:strong_pseudomono}, \ref{ass:h_lipschitz_x},  \ref{ass:time_regularity}, and   \ref{ass:f_lipschitz} hold. Then the reduced mapping $(t,x)\mapsto\tilde{f}(t, x)$ as in \eqref{reduced} is Carath\'{e}odory on $[0,T]\times H$ and uniformly Lipschitz continuous with respect to the state $x$. In particular,
\begin{equation}\label{reduceLip}
    \norm{\tilde f(t,x_1)-\tilde f(t,x_2)}
    \le \bigl(\alpha_f + \beta_f L\bigr)\norm{x_1-x_2} \quad \text{for all }\; x_1, x_2 \in H,
\end{equation}
where $L>0$ is the  state-to-control Lipschitz constant from  \eqref{eq:LU_def}. 
\end{lemma}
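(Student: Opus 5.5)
The plan has three parts: the Lipschitz estimate, continuity in $x$ for fixed $t$, and strong measurability in $t$ for fixed $x$.

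\emph{Lipschitz estimate.} Fix $t\in[0,T]$ and $x_1,x_2\in H$. Assumption~\ref{ass:f_lipschitz} gives
\begin{equation*}
\norm{\tilde f(t,x_1)-\tilde f(t,x_2)}\le \alpha_f\norm{x_1-x_2}+\beta_f\norm{u_{x_1}(t)-u_{x_2}(t)}.
\end{equation*}
Proposition~\ref{lem:lipschitz_u} bounds the last term by $L\norm{x_1-x_2}$, which yields \eqref{reduceLip}. Because the bound holds for every $t$, continuity of $x\mapsto\tilde f(t,x)$ holds for all $t$, not merely for a.e.\ $t$.

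\emph{Measurability.} Fix $x\in H$. I would not pass through the Carath\'eodory property of $f$ directly. Instead, apply Proposition~\ref{lem:continuity_u} to the constant trajectory $x(\cdot)\equiv x\in C([0,T];H)$. This shows that $t\mapsto u_x(t)$ belongs to $C([0,T];H)$. Then $t\mapsto\tilde f(t,x)=f(t,x,u_x(t))$ is the superposition of the Carath\'eodory map $f$ (Assumption~\ref{ass:operators}(iv)) with the continuous curve $z(t):=(x,u_x(t))\in C([0,T];H\times H)$.

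\emph{Main obstacle.} The key point is applying Lemma~\ref{lem:caratheodory_superposition_prelim} to $G(t,(x,u)):=f(t,x,u)$ on the space $X=H\times H$. Here $X$ is a separable metric space because $H$ is separable. The lemma requires $G$ to be Carath\'eodory jointly in $(x,u)$. This holds: measurability in $t$ for fixed $(x,u)$ is part of Assumption~\ref{ass:operators}(iv), and continuity in $(x,u)$ for a.e.\ $t$ is given there too (and Assumption~\ref{ass:f_lipschitz} gives it for every $t$). The lemma then gives strong measurability of $t\mapsto\tilde f(t,x)$. Combined with the continuity in $x$ from the first step, this shows $\tilde f$ is a Carath\'eodory mapping on $[0,T]\times H$.
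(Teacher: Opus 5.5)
Your proposal is correct and follows essentially the same route as the paper: measurability in $t$ comes from applying Proposition~\ref{lem:continuity_u} to the constant trajectory and composing with the Carath\'eodory map $f$, and the Lipschitz bound comes from Assumption~\ref{ass:f_lipschitz} combined with Proposition~\ref{lem:lipschitz_u}. The differences are cosmetic: you invoke Lemma~\ref{lem:caratheodory_superposition_prelim} on $X=H\times H$ explicitly where the paper leaves this superposition step implicit, and you read off continuity in $x$ (for every $t$) from the Lipschitz estimate rather than by composing continuous maps.
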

\begin{proof} Fix $x\in H$. By applying Proposition \ref{lem:continuity_u} to the constant function $x(\cdot)\equiv x$ on $[0,T]$, we conclude that the mapping $t\mapsto u_x(t)$ is continuous on $[0,T]$. Since $f$ is Carath\'{e}odory, it follows that $
    t\mapsto \tilde f(t,x)=f(t,x,u_x(t))$
is measurable. Moreover, Proposition \ref{lem:lipschitz_u} implies that, for almost every $t\in[0,T]$, the mapping $x\mapsto u_x(t)$ is continuous. Since $f$ is a Carath\'eodory mapping, the mapping $(x,u)\mapsto f(t,x,u)$ is continuous for almost every $t\in[0,T]$. Consequently, the mapping $
x\longmapsto \tilde f(t,x)=f\bigl(t,x,u_x(t)\bigr)$ 
is continuous for almost every $t\in[0,T]$. Therefore, $\tilde f$ is a Carath\'eodory mapping. 

\medskip We next verify the Lipschitz continuity of $\tilde{f}$. Fix $t\in[0,T]$ and let $x_1,x_2\in H$ be arbitrary. By the definition of the reduced mapping, we have 
\begin{equation*}
    \tilde f(t,x_i)=f\bigl(t,x_i,u_{x_i}(t)\bigr), \qquad i=1,2.
\end{equation*}
We now split the difference into the part coming from the state variable and the part coming from the implicit variable. More specifically, 
\begin{align*}
    \tilde f(t,x_1)-\tilde f(t,x_2)
    &= f\bigl(t,x_1,u_{x_1}(t)\bigr)-f\bigl(t,x_2,u_{x_2}(t)\bigr) 
    \\ &=\Bigl[f\bigl(t,x_1,u_{x_1}(t)\bigr)-f\bigl(t,x_2,u_{x_1}(t)\bigr)\Bigr] + \Bigl[f\bigl(t,x_2,u_{x_1}(t)\bigr)-f\bigl(t,x_2,u_{x_2}(t)\bigr)\Bigr].
\end{align*}
Applying the triangle inequality gives us the following estimate. 
\begin{align*}
    \norm{\tilde f(t,x_1)-\tilde f(t,x_2)}
    &\le \norm{f\bigl(t,x_1,u_{x_1}(t)\bigr)-f\bigl(t,x_2,u_{x_1}(t)\bigr)} + \norm{f\bigl(t,x_2,u_{x_1}(t)\bigr)-f\bigl(t,x_2,u_{x_2}(t)\bigr)}.
\end{align*}
Using Assumption \ref{ass:f_lipschitz}, we deduce that 
\begin{equation*}
    \norm{f\bigl(t,x_1,u_{x_1}(t)\bigr)-f\bigl(t,x_2,u_{x_1}(t)\bigr)}
    \le  \alpha_f \norm{x_1-x_2},
\end{equation*}
and
\begin{equation*}
    \norm{f\bigl(t,x_2,u_{x_1}(t)\bigr)-f\bigl(t,x_2,u_{x_2}(t)\bigr)}
    \le \beta_f \norm{u_{x_1}(t)-u_{x_2}(t)}.
\end{equation*}
Moreover, by Proposition \ref{lem:lipschitz_u}, we can find $L>0$ such that 
\begin{equation*}
    \norm{u_{x_1}(t)-u_{x_2}(t)} \le L  \norm{x_1-x_2}.
\end{equation*}
Therefore, we obtain \eqref{reduceLip}, 
which implies that  the reduced mapping $\tilde f$ is uniformly Lipschitz continuous in the state variable $x$.  The proof is complete. 
    
\end{proof}

Having established the necessary preliminary results, we now present the main theorem of this section, which guarantees the global existence and uniqueness of mild solution pairs for Problem \ref{prob:primal}.

\begin{theorem}[\bf global existence and uniqueness]\label{thm:global_existence} Under Assumptions \ref{ass:operators}, \ref{ass:strong_pseudomono}, \ref{ass:h_lipschitz_x},   \ref{ass:time_regularity}, and   \ref{ass:f_lipschitz}, each initial datum $x_0 \in H$ generates a unique  mild solution pair $(x,u)\in C([0,T];H)\times C([0,T];H)$ on $[0,T]$ for Problem \ref{prob:primal}. The state component $x$ satisfies the reduced mild equation
\begin{equation}\label{eq:mild_solution}
    x(t) = e^{At} x_0 + \int_0^t e^{A(t-s)} \tilde{f}(s, x(s)) \ud s,
\end{equation}
whereas the algebraic component is uniquely recovered through
$u(t)=u_{x(t)}(t)$ for all $t\in[0,T]$. For each frozen pair $(t,x(t))$,
the value $u_{x(t)}(t)$ is unique by Proposition~\ref{lem:unique_u}, while
the trajectory $t\mapsto u_{x(t)}(t)$ is continuous by
Proposition~\ref{lem:continuity_u}.
\end{theorem}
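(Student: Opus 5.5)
We would prove the theorem by reducing it to a fixed-point problem for the single state variable and then applying Banach's theorem in the Bielecki norm. Define $\Gamma: C([0,T];H)\to C([0,T];H)$ by
\begin{equation*}
 (\Gamma x)(t):=e^{At}x_0+\int_0^t e^{A(t-s)}\tilde f(s,x(s))\,\ud s .
\end{equation*}

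\textbf{Well-definedness of $\Gamma$.} Fix $x\in C([0,T];H)$.
\begin{itemize}
\item By Lemma~\ref{lem:lipschitz_reduced}, $\tilde f$ is Carath\'eodory, so Lemma~\ref{lem:caratheodory_superposition_prelim} (with $X=H$ separable) shows that $s\mapsto\tilde f(s,x(s))$ is strongly measurable.
\item For integrability we need a dominating function. Write $L_{\tilde f}:=\alpha_f+\beta_f L$. By Assumption~\ref{ass:f_lipschitz} and \eqref{reduceLip},
\begin{equation*}
 \norm{\tilde f(s,x(s))}\le \norm{f(s,0,0)}+\beta_f\norm{u_0(s)}+L_{\tilde f}\norm{x(s)},
\end{equation*}
where $u_0(s):=u_{0}(s)$ is the control associated with the zero state.
\item Proposition~\ref{lem:continuity_u}, applied to the trajectory $x\equiv0$, gives $u_0\in C([0,T];H)$, so $u_0$ is bounded.
\item Assumption~\ref{ass:operators}(iv) gives $f(\cdot,0,0)\in L^1$.
\end{itemize}
The right-hand side is therefore in $L^1(0,T)$, and hence $\tilde f(\cdot,x(\cdot))\in L^1(0,T;H)$. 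Lemma~\ref{lem:semigroup_bound_prelim} and Lemma~\ref{lem:semigroup_convolution_prelim} then show that $\Gamma x$ is continuous. So $\Gamma$ maps $C([0,T];H)$ into itself.

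\textbf{Contraction estimate.} Let $M,\omega$ be the constants from Lemma~\ref{lem:semigroup_bound_prelim}, and set $\bar\omega:=\omega_+$. For $x_1,x_2\in C([0,T];H)$,
\begin{equation*}
 e^{-\lambda t}\norm{(\Gamma x_1-\Gamma x_2)(t)}\le M e^{\bar\omega T}L_{\tilde f}\int_0^t e^{-\lambda(t-s)}e^{-\lambda s}\norm{x_1(s)-x_2(s)}\,\ud s\le \frac{M e^{\bar\omega T}L_{\tilde f}}{\lambda}\norm{x_1-x_2}_\lambda .
\end{equation*}
Choosing $\lambda>M e^{\bar\omega T}L_{\tilde f}$ makes $\Gamma$ a strict contraction on $(C([0,T];H),\norm{\cdot}_\lambda)$. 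This space is complete by Lemma~\ref{lem:bielecki_equiv}. Banach's theorem therefore gives a unique $x$ solving \eqref{eq:mild_solution} on the whole interval $[0,T]$, with no restriction to small time.

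\textbf{Recovering $u$.} Set $u(t):=u_{x(t)}(t)$.
\begin{itemize}
\item Proposition~\ref{lem:continuity_u} gives $u\in C([0,T];H)$.
\item Proposition~\ref{lem:unique_u}, together with the equivalence of \eqref{eq:inclusion} with \eqref{eq:vi_u}, shows that the inclusion holds for every $t$.
\item Since $\tilde f(s,x(s))=f(s,x(s),u(s))$, equation \eqref{eq:mild_solution} is exactly \eqref{eq:state_mild}.
\end{itemize}
Hence $(x,u)$ is a mild solution pair.

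\textbf{Uniqueness.} Let $(\bar x,\bar u)$ be any mild solution pair. Its inclusion holds only for a.e.\ $t$, and there uniqueness in Proposition~\ref{lem:unique_u} forces $\bar u(t)=u_{\bar x(t)}(t)$. Both $\bar u$ and $t\mapsto u_{\bar x(t)}(t)$ are continuous, the latter by Proposition~\ref{lem:continuity_u}. Two continuous functions that agree almost everywhere agree everywhere. Consequently the integrands coincide, so $\bar x$ is a fixed point of $\Gamma$, giving $\bar x=x$ and then $\bar u=u$.

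\textbf{Main obstacle.} The step we expect to need the most care is the $L^1$ bound for $\tilde f(\cdot,x(\cdot))$. It requires controlling $f(s,0,u_0(s))$, which rests on the continuity, and hence boundedness, of the zero-state control $u_0$ from Proposition~\ref{lem:continuity_u}. It also requires the measurability furnished by Lemma~\ref{lem:lipschitz_reduced}. The contraction step itself is routine.
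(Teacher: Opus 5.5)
Your proposal is correct and follows the paper's proof essentially step for step. You use the same reduced fixed-point operator, the same $L^1$ domination through the bounded zero-state control $u_0$, and a Bielecki contraction; your only variation there is bounding $\norm{e^{A(t-s)}}$ by $Me^{\omega_+T}$ and taking $\lambda>Me^{\omega_+T}L_{\tilde f}$ instead of the paper's $\lambda>\omega+M\tilde L$. Your uniqueness step, which explicitly handles the competitor's inclusion holding only almost everywhere, is slightly more careful than the paper's.
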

\begin{proof}
Let $\mathcal{X} = C([0,T]; H)$ and equip it with the Bielecki weighted norm
\begin{equation*}
    \norm{x}_\lambda = \sup_{t \in [0,T]} e^{-\lambda t} \norm{x(t)}, \quad \text{for a tuning parameter } \lambda > 0.
\end{equation*}
By Lemma \ref{lem:bielecki_equiv}, $\norm{\cdot}_\lambda$ is equivalent to $\norm{\cdot}_{C([0,T];H)}$, so $(\mathcal{X}, \norm{\cdot}_\lambda)$ is complete.  The proof of this theorem is divided into the following claims. 

\medskip\noindent
{\bf Claim 1.} {\em The integral operator $\mathcal{S}: \mathcal{X} \to \mathcal{X}$  
\begin{equation}\label{eq:operator_S}
    (\mathcal{S}x)(t) = e^{At} x_0 + \int_0^t e^{A(t-s)} \tilde{f}(s, x(s)) \ud s, \quad \forall t \in [0, T].
\end{equation}
is well-defined.}

\medskip 
Indeed, by Lemma \ref{lem:semigroup_bound_prelim}, the map $t\mapsto e^{At}x_0$ is continuous. Let $x\in \mathcal X$,  $t \mapsto u_{x(t)}(t)$ be the mapping uniquely associated with $x$ according to Proposition \ref{lem:unique_u}, and set $R:=\norm{x}_{C([0,T];H)}$. By Proposition \ref{lem:continuity_u}, the frozen control associated with the zero state, denoted by $u_0(\cdot)$, is continuous on $[0,T]$. Hence, by Weierstrass's theorem, $u_0(\cdot)$ is bounded on $[0,T]$. Using Proposition \ref{lem:lipschitz_u}, we can find a positive number $L>0$ such that 
$$
\norm{u_{x(t)}(t)-u_0(t)} \leq L\norm{x(t)} \leq LR \quad \text{for all }\;t \in [0,T],
$$
which implies that 
\begin{equation*}
    \norm{u_{x(t)}(t)}\le \norm{u_{x(t)}(t)-u_0(t)}+\norm{u_0(t)}
    \le L R+\norm{u_0(t)} \quad \text{for all }\; t\in[0,T].
\end{equation*}
Combining the latter with Assumption \ref{ass:f_lipschitz}, for almost every $t\in[0,T]$, we have 
\begin{align*}
    \norm{\tilde f(t,x(t))} 
    & \leq \norm{f(t,x(t), u_{x(t)}(t)) -f(t,0,0)} +  \norm{f(t,0,0)} \\
    & \leq   \alpha_f\norm{x(t)} + \beta_f \norm{u_{x(t)}(t)} +\norm{f(t,0,0)}  \\
    & \leq \alpha_f R + \beta_f \bigl(L R+\norm{u_0(t)}\bigr) + \norm{f(t,0,0)}
\end{align*}
The right-hand side of the above inequality belongs to $L^1(0,T)$ because $u_0$ is continuous on the compact interval $[0,T]$ and $f(\cdot,0,0)\in L^1(0,T;H)$ by Assumption \ref{ass:operators}.  Since $\tilde f$ is Carath\'eodory by Lemma  \ref{lem:lipschitz_reduced}, Lemma \ref{lem:caratheodory_superposition_prelim} shows that the integrand is strongly measurable and Bochner integrable. Lemma \ref{lem:semigroup_convolution_prelim} then implies that $\mathcal{S}$ is well-defined.

\medskip \noindent
{\bf Claim 2.} {\em There is $\lm >0$ such that  the operator $\mathcal{S}$ in \eqref{eq:operator_S} is contractive  on $(\mathcal{X},\norm{\cdot}_\lambda)$}.

\medskip 
By Lemma \ref{lem:semigroup_bound_prelim}, there exist constants $M \ge 1$ and $\omega \in \R$ such that
\begin{equation}\label{eAt}
    \norm{e^{At}} \le M e^{\omega t}  \quad \text{for all }\;  t \ge 0.
\end{equation}
Let $x_1, x_2 \in \mathcal{X}$ be two arbitrary state trajectories. For any $t \in [0, T]$, we have 
\begin{align}\label{Sesti}
    \norm{(\mathcal{S}x_1)(t) - (\mathcal{S}x_2)(t)} &= \norm{ \int_0^t e^{A(t-s)} \big( \tilde{f}(s, x_1(s)) - \tilde{f}(s, x_2(s)) \big) \ud s } \nonumber \\ 
    &\le \int_0^t \norm{e^{A(t-s)}} \norm{\tilde{f}(s, x_1(s)) - \tilde{f}(s, x_2(s))} \ud s.
\end{align}
By Lemma \ref{lem:lipschitz_reduced}, we can find $\tilde{L}>0$ such that  
\begin{equation}\label{Lipoftilde}
  \norm{\tilde f(t,x_1(t)) - \tilde f(t,x_2(t))} \leq \tilde{L}\norm{x_1(t)-x_2(t)} \quad \text{for all }\; t \in [0,T].
\end{equation}
Combining \eqref{eAt}, \eqref{Sesti} and \eqref{Lipoftilde}, we deduce that 
$$
  \norm{(\mathcal{S}x_1)(t) - (\mathcal{S}x_2)(t)} \le M \tilde{L} \int_0^t e^{\omega (t-s)} \norm{x_1(s) - x_2(s)} \ud s.
  $$
Pick a positive number $\lm >\omega + M\tilde{L}$. Noting that  $1=e^{\lambda s}e^{-\lambda s}$ and multiplying  both sides of the above inequality by $e^{-\lambda t}$, we have the following estimates 
\begin{align*}
    e^{-\lambda t} \norm{(\mathcal{S}x_1)(t) - (\mathcal{S}x_2)(t)} &\le M \tilde{L} \int_0^t e^{\omega(t-s)} e^{-\lambda t} e^{\lambda s} e^{-\lambda s} \norm{x_1(s) - x_2(s)} \ud s \\
    &= M \tilde{L} \int_0^t e^{(\omega - \lambda)(t-s)} \big( e^{-\lambda s} \norm{x_1(s) - x_2(s)} \big) \ud s.
\end{align*}
Since $e^{-\lambda s} \norm{x_1(s) - x_2(s)} \le \norm{x_1 - x_2}_\lambda$, we deduce that 
\begin{equation}\label{Sop}
  e^{-\lambda t} \norm{(\mathcal{S}x_1)(t) - (\mathcal{S}x_2)(t)} \le M \tilde{L} \norm{x_1 - x_2}_\lambda \int_0^t e^{(\omega - \lambda)(t-s)} \ud s.
\end{equation}
Since $\lambda > \omega$, we have 
\begin{equation}\label{integralest}
     \int_0^t e^{(\omega - \lambda)(t-s)} \ud s = \left[ \frac{-1}{\lambda - \omega} e^{(\omega - \lambda)(t-s)} \right]_{s=0}^{s=t} = \frac{1 - e^{(\omega - \lambda)t}}{\lambda - \omega} \le \frac{1}{\lambda - \omega}.
\end{equation}
By \eqref{Sop} and \eqref{integralest}, it follows that 
$$
   \norm{\mathcal{S}x_1 - \mathcal{S}x_2}_\lambda \le \frac{M \tilde{L}}{\lambda - \omega} \norm{x_1 - x_2}_\lambda.
$$
Combining this with the fact that $\frac{M\tilde{L}}{\lambda-\omega}<1$,  we conclude that $\mathcal{S}$ is contractive on $(\mathcal{X},\norm{\cdot}_\lambda)$. 

\medskip\noindent
{\bf Claim 3.} {\em Problem \ref{prob:primal} has a unique mild solution.}

\medskip Indeed, since $\mathcal{S}$ is contractive on $(\mathcal{X},\norm{\cdot}_\lambda)$, it follows from the Banach fixed-point theorem that $\mathcal{S}$ has a unique fixed point $x \in \mathcal{X}$. Define $u(t):=u_{x(t)}(t)$. Its frozen value is unique by Proposition~\ref{lem:unique_u}, and the resulting trajectory is continuous by Proposition~\ref{lem:continuity_u}. We verify that the pair $(x,u)$ satisfies \eqref{eq:state_mild} and \eqref{eq:inclusion} in Problem \ref{prob:primal}. It is clear that the definition of $u$  automatically satisfies \eqref{eq:inclusion}. Moreover, since $x$ is the unique fixed-point of $\mathcal{S}$ on $\mathcal{X}$, $x$ satisfies \eqref{eq:mild_solution}. Combining the latter with the fact that $\tilde f(t,x(t))=f(t,x(t),u(t))$, we have
\begin{equation*}
    x(t) = e^{At}x_0 + \int_0^t e^{A(t-s)} f(s, x(s), u(s)) \ud s, \quad t \in [0, T],
\end{equation*}
which verifies \eqref{eq:state_mild}.  In particular, the inclusion required a.e. in Problem \ref{prob:primal} is satisfied by the continuous representative constructed above, and the pointwise identity $u(t)=u_{x(t)}(t)$ holds for all $t\in[0,T]$. Evaluating \eqref{eq:state_mild} at $t=0$ also gives $x(0)=x_0$. Uniqueness of the pair follows from uniqueness of the fixed point $x$ together with uniqueness of $u(t)=u_{x(t)}(t)$ for each frozen time. The proof is complete. 

\end{proof}

\begin{remark}[\bf numerical methods for nonsmooth dynamical systems] \rm
The proof of Theorem~\ref{thm:global_existence} suggests an outer fixed-point
iteration for the mild state equation with a frozen VI solve at each step.
Modified projection methods such as \cite{kv14} are relevant to the inner
problem when their algorithmic hypotheses are verified. The results in
\cite{kha22}, including normal-map error bounds, likewise require the
constraint and regularity assumptions imposed there; the present standing
assumptions, in particular for a possibly unbounded $K(t)$, do not by
themselves establish convergence of a complete inner algorithm. At the
continuous frozen-VI level, \cite[Theorem~4.2]{KimVuongKhanh2016} gives
\begin{equation*}
    \norm{y-y_x(t)}
    \le
    \Bigl(\tfrac{L_F+1}{\gamma}+1\Bigr)
    \norm{y-\proj_{K(t)}\bigl(y-F_{t,x}(y)\bigr)},
    \qquad y\in K(t),
\end{equation*}
which controls the frozen solution error by a computable residual.

\medskip 
Alternatively, one may solve the projected residual equation
$$
R(y):=y-\proj_{K(t)}\bigl(y-F_{t,x}(y)\bigr)=0
$$
after spatial discretization, using semismooth or coderivative-based Newton
methods \cite{FacchineiPang2003,Solo14,Ul,Helmut,BorisKhanhPhat,kmp24cod,
kmptjogo,kmptmp} under suitable finite-dimensional regularity. A convergence
analysis for these solvers and for a full time-space discretization (compare
the sweeping-process discretization background in \cite{BernicotVenel2013})
is left for future work.
\end{remark}

\section{Well-Posedness and Stability Analysis}\label{sec:stability}
In the previous section, we established global existence and uniqueness of mild solution pairs. We now turn to quantitative stability properties of the coupled system on $[0,T]$. We prove continuous dependence on the initial state, derive Hadamard well-posedness, and establish an incremental exponential stability criterion in the dissipative regime. To proceed with the main results, we need the following lemma taken from \cite{gron19}.

\begin{lemma}[\bf Gronwall's inequality]\label{lem:gronwall_prelim}
Let $\phi: [0,T] \to \R_+$ be continuous and suppose that for some constants $a,b \ge 0$,
\begin{equation}
    \phi(t) \le a + b\int_0^t \phi(s)\,\ud s, \quad \forall t \in [0,T].
\end{equation}
Then
\begin{equation}
    \phi(t) \le a e^{bt}, \quad \forall t \in [0,T].
\end{equation}
\end{lemma}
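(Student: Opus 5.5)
The inequality is the classical Gronwall lemma, and I plan to prove it by the integrating-factor argument. The trivial case $b=0$ is immediate, since then $\phi(t)\le a = a e^{0\cdot t}$; so assume $b>0$. Set
\begin{equation*}
    \psi(t):=a+b\int_0^t\phi(s)\,\ud s,\qquad t\in[0,T].
\end{equation*}
Because $\phi$ is continuous, $\psi$ is continuously differentiable with $\psi'(t)=b\phi(t)$. The hypothesis reads $\phi(t)\le\psi(t)$, and since $b\ge0$ this gives the differential inequality $\psi'(t)\le b\psi(t)$ on $[0,T]$.

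Next I introduce the integrating factor: define $\chi(t):=e^{-bt}\psi(t)$. Then
\begin{equation*}
    \chi'(t)=e^{-bt}\bigl(\psi'(t)-b\psi(t)\bigr)\le 0,
\end{equation*}
so $\chi$ is nonincreasing on $[0,T]$. Hence $\chi(t)\le\chi(0)=\psi(0)=a$, that is, $\psi(t)\le a e^{bt}$. Combining this with $\phi\le\psi$ yields $\phi(t)\le a e^{bt}$ for all $t\in[0,T]$, which is the claim.

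No step is a real obstacle here. The only points needing care are that continuity of $\phi$ is used to justify the fundamental theorem of calculus for $\psi$, and that the sign $b\ge0$ is what lets us multiply $\phi\le\psi$ by $b$. Nonnegativity of $\phi$ is not even required for this argument; only $a,b\ge0$ and continuity are used. As an alternative route, iterating the inequality $n$ times and bounding the remainder by $\|\phi\|_\infty (bt)^n/n!\to0$ gives the same bound as a partial sum of the exponential series. I would keep the integrating-factor proof as the primary one because it is shorter.
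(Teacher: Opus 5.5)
Your integrating-factor proof is correct. Continuity of $\phi$ makes $\psi\in C^1$ with $\psi'=b\phi$. The sign $b\ge0$ turns $\phi\le\psi$ into $\psi'\le b\psi$, and monotonicity of $e^{-bt}\psi$ then gives $\phi\le\psi\le ae^{bt}$. You are also right that $\phi\ge0$ is never used.

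The paper does not prove this lemma; it imports it from Gronwall's note \cite{gron19}. So the comparison is between a citation and your self-contained proof.

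Beyond self-containedness, your argument is flexible. If the constant $a$ is replaced by a $C^1$ (or absolutely continuous) function $a(t)$, run the same computation with $\psi(t):=a(t)+b\int_0^t\phi(s)\,\ud s$. This gives $(e^{-bt}\psi)'\le e^{-bt}a'(t)$, and hence
\begin{equation*}
\phi(t)\le a(0)e^{bt}+\int_0^t e^{b(t-s)}a'(s)\,\ud s .
\end{equation*}
This is exactly the ``convolution form'' of Gronwall's inequality that the paper uses, without stating it, in the proof of Theorem~\ref{thm:parametrized_continuity}. There one takes $a(t)=M\norm{\zeta_n-\zeta}+h_n(t)$, with $h_n(0)=0$ and $h_n'(s)=M\Delta_n e^{-\omega s}$.

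The lemma as stated, with constant $a$, does not literally cover that step. Bounding $h_n(t)\le h_n(T)$ and using the constant version would still give the continuity conclusion, only with a cruder constant. So your route yields the sharper estimate the paper actually displays there.
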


\begin{theorem}[\bf continuous dependence on initial data]\label{thm:continuous_dependence}
Under Assumptions \ref{ass:operators}, \ref{ass:strong_pseudomono}, \ref{ass:h_lipschitz_x}, \ref{ass:time_regularity}, and \ref{ass:f_lipschitz}, let $(x_i,u_i)$  be mild solution pairs to  Problem \ref{prob:primal} corresponding to initial data $x_{0,i} \in H$ for $i=1,2$. Then, for every $t \in [0,T]$, we have 
\begin{equation}\label{1stestistability}
    \norm{x_1(t)-x_2(t)} \le M e^{(\omega + M\tilde{L})t} \norm{x_{0,1}-x_{0,2}},
\end{equation}
and
\begin{equation}\label{2ndestistability}
    \norm{u_1(t)-u_2(t)} \le L M e^{(\omega + M\tilde{L})t} \norm{x_{0,1}-x_{0,2}},
\end{equation}
where $M\geq 1$ and $\omega \in \R$ are semigroup growth constants as in Lemma \ref{lem:semigroup_bound_prelim}, $\tilde{L}:=\alpha_f + \beta_f L$ is the reduced Lipschitz constant from Lemma \ref{lem:lipschitz_reduced}, and $L>0$ is the state-to-control Lipschitz constant from Proposition \ref{lem:lipschitz_u}. Consequently, we have 
\begin{equation}\label{supesti1}
    \norm{x_1-x_2}_{C([0,T];H)} \le M e^{(\omega + M\tilde{L})_+T} \norm{x_{0,1}-x_{0,2}},
\end{equation}
and
\begin{equation}\label{supesti2}
    \norm{u_1-u_2}_{C([0,T];H)} \le L M e^{(\omega + M\tilde{L})_+T} \norm{x_{0,1}-x_{0,2}}.
\end{equation}
\end{theorem}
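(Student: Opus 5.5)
The plan is to subtract the two reduced mild equations and close the estimate with Gronwall's inequality (Lemma~\ref{lem:gronwall_prelim}) after an exponential rescaling. By Theorem~\ref{thm:global_existence}, each $x_i$ satisfies \eqref{eq:mild_solution} with initial datum $x_{0,i}$, and $u_i(t)=u_{x_i(t)}(t)$. Subtracting the two equations gives
\begin{equation*}
 x_1(t)-x_2(t)=e^{At}(x_{0,1}-x_{0,2})+\int_0^t e^{A(t-s)}\bigl(\tilde f(s,x_1(s))-\tilde f(s,x_2(s))\bigr)\,\ud s .
\end{equation*}
I will bound the semigroup by $\norm{e^{At}}\le Me^{\omega t}$ (Lemma~\ref{lem:semigroup_bound_prelim}) and the integrand by $\tilde L\norm{x_1(s)-x_2(s)}$ (Lemma~\ref{lem:lipschitz_reduced}). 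This yields
\begin{equation*}
 \norm{x_1(t)-x_2(t)}\le Me^{\omega t}\norm{x_{0,1}-x_{0,2}}+M\tilde L\int_0^t e^{\omega(t-s)}\norm{x_1(s)-x_2(s)}\,\ud s .
\end{equation*}

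The key step is to remove the kernel $e^{\omega(t-s)}$, whose sign is arbitrary. Set $\phi(t):=e^{-\omega t}\norm{x_1(t)-x_2(t)}$, which is continuous and nonnegative because $x_i\in C([0,T];H)$. Multiplying the previous inequality by $e^{-\omega t}$ gives
\begin{equation*}
 \phi(t)\le M\norm{x_{0,1}-x_{0,2}}+M\tilde L\int_0^t\phi(s)\,\ud s .
\end{equation*}
Lemma~\ref{lem:gronwall_prelim} with $a=M\norm{x_{0,1}-x_{0,2}}$ and $b=M\tilde L$ then gives $\phi(t)\le M\norm{x_{0,1}-x_{0,2}}e^{M\tilde L t}$. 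Multiplying back by $e^{\omega t}$ yields \eqref{1stestistability}. This rescaling is the only delicate point; once it is done the argument is routine.

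For \eqref{2ndestistability}, apply Proposition~\ref{lem:lipschitz_u} at each fixed $t$ to the states $x_1(t)$ and $x_2(t)$. This gives $\norm{u_1(t)-u_2(t)}\le L\norm{x_1(t)-x_2(t)}$, and \eqref{1stestistability} finishes the bound.

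For the sup-norm bounds \eqref{supesti1} and \eqref{supesti2}, note that for $t\in[0,T]$ we have $(\omega+M\tilde L)t\le(\omega+M\tilde L)_+T$. If the exponent is nonpositive, the exponential is at most $1=e^{0}$. Taking the maximum over $t\in[0,T]$ in the two pointwise estimates then gives the claims.
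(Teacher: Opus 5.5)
Your proposal is correct and follows the paper's proof essentially step for step. Both subtract the reduced mild equations, rescale by $e^{-\omega t}$, apply Lemma~\ref{lem:gronwall_prelim} with $a=M\norm{x_{0,1}-x_{0,2}}$ and $b=M\tilde L$, pass to the algebraic variable via Proposition~\ref{lem:lipschitz_u}, and take suprema using $(\omega+M\tilde L)t\le(\omega+M\tilde L)_+T$.
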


\begin{proof} For $i=1,2$, the reduced mild representation reads
\begin{equation*}
    x_i(t)=e^{At}x_{0,i}+\int_0^t e^{A(t-s)}\tilde{f}(s,x_i(s))\,\ud s.
\end{equation*}
Subtracting the two identities and using the semigroup estimate $\norm{e^{At}}\le M e^{\omega t}$, we have 
\begin{equation*}
    \norm{x_1(t)-x_2(t)}\le M e^{\omega t}\norm{x_{0,1}-x_{0,2}} + M\int_0^t e^{\omega(t-s)}\norm{\tilde{f}(s,x_1(s)) - \tilde{f}(s,x_2(s))}\,\ud s.
\end{equation*}
By Lemma \ref{lem:lipschitz_reduced}, we have 
\begin{equation}\label{ineqx(t)}
    \norm{x_1(t)-x_2(t)}
    \le M e^{\omega t}\norm{x_{0,1}-x_{0,2}}
    + M\tilde{L}\int_0^t e^{\omega(t-s)}\norm{x_1(s)-x_2(s)}\,\ud s.
\end{equation}
Next, we define 
\begin{equation*}
    d(t):=\norm{x_1(t)-x_2(t)},\qquad z(t):=e^{-\omega t}\norm{x_1(t)-x_2(t)}. 
\end{equation*}
Then, we can rewrite \eqref{ineqx(t)} as follows: 
\begin{equation*}
    z(t)\le M\norm{x_{0,1}-x_{0,2}}+M\tilde{L}\int_0^t z(s)\,\ud s.
\end{equation*}
Applying Lemma \ref{lem:gronwall_prelim} for $a:=M\norm{x_{0,1}-x_{0,2}},\; b:=M\tilde{L},\; \phi(t):=z(t)$ yields
\begin{equation*}
    z(t)\le M e^{M\tilde{L}t}\norm{x_{0,1}-x_{0,2}},
\end{equation*}
which proves \eqref{1stestistability}. Moreover, Proposition \ref{lem:lipschitz_u} implies that for each $t \in [0,T]$, we have the  estimates
\begin{equation*}
    \norm{u_1(t)-u_2(t)}
    = \norm{u_{x_1(t)}(t)-u_{x_2(t)}(t)}
    \le L\norm{x_1(t)-x_2(t)},
\end{equation*}
which verifies \eqref{2ndestistability} by combining the latter with  \eqref{1stestistability}.   By taking the supremum over $t\in[0,T]$ in inequalities \eqref{1stestistability} and \eqref{2ndestistability}, we obtain \eqref{supesti1} and \eqref{supesti2}, which completes the proof. 

\end{proof}

The main result, Theorem~\ref{thm:continuous_dependence}, yields several important consequences. The first is the global well-posedness of the system in the sense of Hadamard, stated in the following corollary.

\begin{corollary}[\bf Hadamard well-posedness]\label{cor:hadamard_wellposed} Under Assumptions \ref{ass:operators}, \ref{ass:strong_pseudomono}, \ref{ass:h_lipschitz_x}, \ref{ass:time_regularity}, and \ref{ass:f_lipschitz}, Problem \ref{prob:primal} is globally well-posed on $[0,T]$ in the sense of Hadamard. More specifically, for every initial datum $x_0\in H$, there exists a unique mild solution pair $(x,u)\in C([0,T];H)\times C([0,T];H)$. Moreover, the associated solution map
\begin{equation*}
  \mathcal{G}: H \to C([0,T];H)\times C([0,T];H), \qquad \mathcal{G}(x_0):=(x,u),
\end{equation*}
is Lipschitz continuous. In particular, for any $x_{0,1},x_{0,2}\in H$, one has
\begin{equation*}
  \norm{\mathcal{G}(x_{0,1})-\mathcal{G}(x_{0,2})}_{C([0,T];H)\times C([0,T];H)}
    \le C_T \norm{x_{0,1}-x_{0,2}},
\end{equation*}
where
\begin{equation*}
 C_T := M e^{(\omega + M\tilde{L})_+T}(1+L),
\end{equation*}
with $M$, $\omega$, $\tilde{L}$, and $L$ as defined in Theorem~\ref{thm:continuous_dependence}. Here, the product norm is given by
\begin{equation*}
  \norm{(x,u)}_{C([0,T];H)\times C([0,T];H)}:=\norm{x}_{C([0,T];H)}+\norm{u}_{C([0,T];H)}.
\end{equation*}
\end{corollary}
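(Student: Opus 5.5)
The corollary is essentially a repackaging of Theorem~\ref{thm:global_existence} and Theorem~\ref{thm:continuous_dependence}, so I will verify the three Hadamard requirements in order: existence, uniqueness, and continuous (here Lipschitz) dependence on the datum. Existence and uniqueness of a mild solution pair $(x,u)\in C([0,T];H)\times C([0,T];H)$ for every $x_0\in H$ come directly from Theorem~\ref{thm:global_existence}. In particular the solution map $\mathcal G(x_0):=(x,u)$ is a well-defined single-valued map into the stated product space, and its second component satisfies $u(t)=u_{x(t)}(t)$.

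For the Lipschitz bound I take $x_{0,1},x_{0,2}\in H$ and set $(x_i,u_i):=\mathcal G(x_{0,i})$. I then apply the sup-norm estimates \eqref{supesti1} and \eqref{supesti2} of Theorem~\ref{thm:continuous_dependence}. Adding them according to the product norm gives
\begin{equation*}
\norm{x_1-x_2}_{C([0,T];H)}+\norm{u_1-u_2}_{C([0,T];H)}\le M e^{(\omega+M\tilde L)_+T}(1+L)\norm{x_{0,1}-x_{0,2}},
\end{equation*}
which is exactly the constant $C_T$. Lipschitz continuity then implies continuity of $\mathcal G$, and this completes the Hadamard requirements.

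The only point needing care is the passage from the pointwise bound $Me^{(\omega+M\tilde L)t}$ to the uniform factor $Me^{(\omega+M\tilde L)_+T}$ when $\omega+M\tilde L$ may be negative. For $t\in[0,T]$ we have $(\omega+M\tilde L)t\le(\omega+M\tilde L)_+T$ in both cases, and this justifies taking the supremum. That inequality is already used to derive \eqref{supesti1} and \eqref{supesti2} in Theorem~\ref{thm:continuous_dependence}, so I do not expect a genuine obstacle.
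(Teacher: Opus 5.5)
Your proposal is correct and follows the paper's own proof: existence and uniqueness come from Theorem~\ref{thm:global_existence}, and the constant $C_T$ comes from adding the sup-norm bounds \eqref{supesti1} and \eqref{supesti2} of Theorem~\ref{thm:continuous_dependence} according to the product norm. Your check that $(\omega+M\tilde L)t\le(\omega+M\tilde L)_+T$ for $t\in[0,T]$ is the right justification for the uniform factor already built into those estimates.
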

\begin{proof}
Existence and uniqueness follow from Theorem \ref{thm:global_existence}, while Theorem  \ref{thm:continuous_dependence} gives
\begin{equation*}
    \norm{x_1-x_2}_{C([0,T];H)} \le M e^{(\omega + M\tilde{L})_+T} \norm{x_{0,1}-x_{0,2}}
\end{equation*}
and
\begin{equation*}
    \norm{u_1-u_2}_{C([0,T];H)} \le L M e^{(\omega + M\tilde{L})_+T} \norm{x_{0,1}-x_{0,2}}.
\end{equation*}
Therefore,
\begin{align*}
    \norm{\mathcal{G}(x_{0,1})-\mathcal{G}(x_{0,2})}_{C\times C}
    &= \norm{x_1-x_2}_{C([0,T];H)} + \norm{u_1-u_2}_{C([0,T];H)} \\
    &\le M e^{(\omega + M\tilde{L})_+T}(1+L)\norm{x_{0,1}-x_{0,2}}.
\end{align*}
This proves the claimed Lipschitz continuity of the solution map and thus the Hadamard well-posedness of Problem \ref{prob:primal}.    
\end{proof}

The next consequence concerns incremental exponential stability, which is stated in the following result.

\begin{corollary}[\bf incremental exponential stability under semigroup decay]\label{cor:incremental_stability} In the setting of Theorem \ref{thm:continuous_dependence}, assume further that the linear semigroup is exponentially decaying, namely
\begin{equation*}
    \norm{e^{At}} \le M e^{-\eta t} \quad \text{for all }\; t\ge 0,
\end{equation*} 
for some   $\eta > M\tilde{L}$. Then, for any two mild solution pairs $(x_i,u_i)$ with initial states $x_{0,i}$, we have 
\begin{equation*}
    \norm{x_1(t)-x_2(t)}\le M e^{-(\eta-M\tilde{L})t}\norm{x_{0,1}-x_{0,2}},
\end{equation*}
\begin{equation*}
    \norm{u_1(t)-u_2(t)}\le L M e^{-(\eta-M\tilde{L})t}\norm{x_{0,1}-x_{0,2}},
\end{equation*}
for all $t\in[0,T]$.  
\end{corollary}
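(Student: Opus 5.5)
The plan is to repeat the argument of Theorem~\ref{thm:continuous_dependence} with the semigroup constant $\omega$ replaced by $-\eta$. Since every earlier estimate used only a bound of the form $\norm{e^{At}}\le Me^{\omega t}$, this amounts to choosing $\omega=-\eta$ and tracking the sign of the resulting exponent.

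First, I write both state components through the reduced mild equation \eqref{eq:mild_solution} of Theorem~\ref{thm:global_existence} and subtract them. I then use the decay bound $\norm{e^{At}}\le Me^{-\eta t}$ together with the Lipschitz estimate \eqref{reduceLip} of Lemma~\ref{lem:lipschitz_reduced}, with $\tilde L=\alpha_f+\beta_f L$. This gives
\begin{equation*}
\norm{x_1(t)-x_2(t)}\le Me^{-\eta t}\norm{x_{0,1}-x_{0,2}}+M\tilde L\int_0^t e^{-\eta(t-s)}\norm{x_1(s)-x_2(s)}\,\ud s .
\end{equation*}

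Next, I set $z(t):=e^{\eta t}\norm{x_1(t)-x_2(t)}$, which is continuous and nonnegative. Multiplying the inequality by $e^{\eta t}$ yields
\begin{equation*}
z(t)\le M\norm{x_{0,1}-x_{0,2}}+M\tilde L\int_0^t z(s)\,\ud s .
\end{equation*}
Lemma~\ref{lem:gronwall_prelim} then gives $z(t)\le Me^{M\tilde L t}\norm{x_{0,1}-x_{0,2}}$. Multiplying back by $e^{-\eta t}$ produces the state estimate with rate $\eta-M\tilde L$. This rate is positive by hypothesis, so the bound is a genuine exponential decay.

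The control estimate follows pointwise from Proposition~\ref{lem:lipschitz_u}. Since $u_i(t)=u_{x_i(t)}(t)$, we have $\norm{u_1(t)-u_2(t)}\le L\norm{x_1(t)-x_2(t)}$, and combining this with the state bound gives the second inequality.

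There is no real obstacle. The one point that needs care is that the constant $M$ in the decay hypothesis must be the same $M$ that appears in the conclusion. Apart from that, the argument is simply Theorem~\ref{thm:continuous_dependence} with $\omega=-\eta$, and the condition $\eta>M\tilde L$ is used only to interpret $\eta-M\tilde L>0$ as a decay rate.
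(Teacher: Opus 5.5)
Your proposal is correct and matches the paper's proof: the paper simply takes $\omega=-\eta$ in Theorem~\ref{thm:continuous_dependence}, while you rerun that theorem's Gronwall argument explicitly with this choice and then pass to the controls via Proposition~\ref{lem:lipschitz_u}. Your remark about using the $M$ from the decay hypothesis consistently, rather than the $M$ from Lemma~\ref{lem:semigroup_bound_prelim}, is a useful clarification that the paper leaves implicit.
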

\begin{proof}
The assumption $\norm{e^{At}}\le M e^{-\eta t}$ means that the semigroup growth constant in Theorem \ref{thm:continuous_dependence} may be taken as $\omega=-\eta$. Substituting this value into the estimate from that theorem yields
\begin{equation*}
    \norm{x_1(t)-x_2(t)}
    \le M e^{-(\eta-M\tilde L)t}\norm{x_{0,1}-x_{0,2}},
\end{equation*}
and the same substitution in the state-to-control estimate gives
\begin{equation*}
    \norm{u_1(t)-u_2(t)}
    \le L M e^{-(\eta-M\tilde L)t}\norm{x_{0,1}-x_{0,2}}.
\end{equation*}
The condition $\eta>M\tilde L$ ensures that the exponent is strictly negative, so the distance between any two trajectories decays exponentially. This is precisely the incremental exponential stability statement.     
\end{proof}

\section{Sensitivity Analysis}\label{sec:perturbation}
We now allow the initial datum and the nonlinear terms to vary. Related
parameter-sensitivity questions for differential hemivariational systems are
studied in \cite{ZengMigorskiLiu2021,LiLiu2018}. Set $\Lambda:=H$, let
$(E,d_E)$ be the external parameter space, and equip $\Lambda\times E$ with
\begin{equation*}
 d_{\Lambda\times E}((\zeta_1,\eta_1),(\zeta_2,\eta_2))
 :=\norm{\zeta_1-\zeta_2}+d_E(\eta_1,\eta_2).
\end{equation*}
The operators $A,P$ and the sets $K(t)$ remain fixed; only
$f,h:[0,T]\times H\times H\times E\to H$ depend on $\eta$.
Accordingly, the object of this section is the single-valued solution map
\begin{equation*}
 \mathcal S(\zeta,\eta):=(x_{\zeta,\eta},u_{\zeta,\eta}).
\end{equation*}
The nominal proofs from Sections~\ref{sec:system}--\ref{sec:stability} are
not repeated. We retain only the ingredients that change with the
parameter: constants uniform on bounded $D\subset E$, bounded-range moduli
for $f$ and $h$, and the final difference estimate. The radius occurring in
those moduli is fixed by the uniform a priori bound for the trajectories
under comparison.

\begin{assumption}[\bf parameterized data assumptions]\label{ass:parametrized_data} \rm
The following conditions hold.
\begin{enumerate}[\bf (i)]
\item For every $\eta\in E$, the mappings
$(t,x,u)\mapsto f(t,x,u,\eta)$ and $(t,x,u)\mapsto h(t,x,u,\eta)$ are
Carath\'{e}odory.
\item For every bounded $D\subset E$, there are
$\alpha_f(D),\beta_f(D),\alpha_h(D)\ge0$ such that, for all admissible
arguments,
\begin{align*}
 \norm{f(t,x_1,u_1,\eta)-f(t,x_2,u_2,\eta)}
 &\le\alpha_f(D)\norm{x_1-x_2}+\beta_f(D)\norm{u_1-u_2},\\
 \norm{h(t,x_1,u,\eta)-h(t,x_2,u,\eta)}
 &\le\alpha_h(D)\norm{x_1-x_2}.
\end{align*}
\item For every bounded $D\subset E$ and $R\ge0$, there are moduli
$\gamma_{f,D,R},\gamma_{h,D,R}$ tending to zero at zero such that, for
$\eta_1,\eta_2\in D$ and $x,u\in\mathbb B_R(0)$,
\begin{align*}
 \norm{f(t,x,u,\eta_1)-f(t,x,u,\eta_2)}
 &\le\gamma_{f,D,R}(d_E(\eta_1,\eta_2)),\\
 \norm{h(t,x,u,\eta_1)-h(t,x,u,\eta_2)}
 &\le\gamma_{h,D,R}(d_E(\eta_1,\eta_2)).
\end{align*}
\item For every bounded $D\subset E$ and $R\ge0$, there is a modulus
$\omega_{h,D,R}$ tending to zero at zero such that
\begin{equation*}
 \norm{h(t_1,x,u,\eta)-h(t_2,x,u,\eta)}
 \le\omega_{h,D,R}(|t_1-t_2|)
\end{equation*}
for $\eta\in D$ and $x,u\in\mathbb B_R(0)$.
\item For every bounded $D\subset E$, there are $\gamma(D),L_F(D)>0$
such that
\begin{equation*}
 F^\eta_{t,x}(y):=P^{-1}y-h(t,x,P^{-1}y,\eta)
\end{equation*}
is strongly pseudomonotone on $K(t)$ with modulus $\gamma(D)$ and
Lipschitz continuous on $H$ with modulus $L_F(D)$, uniformly for
$\eta\in D$, $t\in[0,T]$, and $x\in H$.
\item For every bounded $D\subset E$, there is $C_{h,0}(D)\ge0$ such
that $\norm{h(t,0,0,\eta)}\le C_{h,0}(D)$ for $\eta\in D$ and
$t\in[0,T]$.
\item For every bounded $D\subset E$, there is $C_{f,0}(D)\ge0$ such
that $\norm{f(t,0,0,\eta)}\le C_{f,0}(D)$ for $\eta\in D$ and
$t\in[0,T]$.
\end{enumerate}
\end{assumption}

\begin{problem}[\bf parameterized coupled problem]\label{prob:parametric} \rm
Given $(\zeta,\eta)\in\Lambda\times E$, find
$(x_{\zeta,\eta},u_{\zeta,\eta})\in C([0,T];H)^2$ such that
\begin{numcases}{}
 x_{\zeta,\eta}(t)=e^{At}\zeta+\int_0^t e^{A(t-s)}
 f(s,x_{\zeta,\eta}(s),u_{\zeta,\eta}(s),\eta)\,\ud s,
 \quad t\in[0,T],\\
 u_{\zeta,\eta}(t)\in h(t,x_{\zeta,\eta}(t),u_{\zeta,\eta}(t),\eta)
 -N_{K(t)}(Pu_{\zeta,\eta}(t)),
 \quad \text{for a.e. }t\in(0,T).
\end{numcases}
\end{problem}
\begin{definition}[\bf parameterized mild solution]\label{def:parametric_mild_solution}\rm
A pair in $C([0,T];H)^2$ satisfying Problem~\ref{prob:parametric} is called a
mild solution pair.
\end{definition}

For fixed $(\eta,t,x)$, write $u_x^\eta(t)$ for the solution of the
frozen algebraic relation.
\begin{lemma}[\bf parameterized state-to-control map]\label{lem:parametrized_state_to_control}
Under Assumptions~\ref{ass:operators} and~\ref{ass:parametrized_data}, there
exists a unique $u_x^\eta(t)\in H$ satisfying
\begin{equation}\label{eq:parametrized_static_vi}
 Pu_x^\eta(t)\in K(t),\qquad
 \inner{h(t,x,u_x^\eta(t),\eta)-u_x^\eta(t)}{v-Pu_x^\eta(t)}\le0
 \quad(v\in K(t)).
\end{equation}
Moreover, for every bounded $D\subset E$,
\begin{equation*}
 \norm{u_{x_1}^\eta(t)-u_{x_2}^\eta(t)}
 \le L(D)\norm{x_1-x_2},\qquad
 L(D):=\frac{\alpha_h(D)\norm{P^{-1}}}{\gamma(D)}.
\end{equation*}
\end{lemma}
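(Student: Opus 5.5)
The plan is to reduce the statement to the nominal results of Section~\ref{sec:system}, applied with the parameter $\eta$ frozen. Fix $\eta\in E$, $t\in[0,T]$ and $x\in H$, and take the bounded set $D:=\{\eta\}$. Assumption~\ref{ass:parametrized_data}\textbf{(v)} then says that $F^\eta_{t,x}$ is strongly pseudomonotone on $K(t)$ with modulus $\gamma(D)$ and Lipschitz on $H$ with modulus $L_F(D)$.

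\textbf{Existence and uniqueness.} As in Proposition~\ref{lem:unique_u}, the Lipschitz continuity of $F^\eta_{t,x}$ gives its weak continuity on finite-dimensional subspaces of $K(t)$, since weak and norm topologies agree there. Together with strong pseudomonotonicity and the closed convexity of $K(t)$, \cite[Theorem~2.1]{KimVuongKhanh2016} yields a unique $y^\eta_x(t)\in K(t)$ solving $\mathrm{VI}(K(t),F^\eta_{t,x})$. Now set $u^\eta_x(t):=P^{-1}y^\eta_x(t)$. The substitution $y=Pu$ is a bijection, and $F^\eta_{t,x}(Pu)=u-h(t,x,u,\eta)$, so the canonical VI is exactly \eqref{eq:parametrized_static_vi}. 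Uniqueness therefore transfers from $y$ to $u$.

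\textbf{Lipschitz estimate.} Let $D$ be bounded with $\eta\in D$. Both operators $F^\eta_{t,x_1}$ and $F^\eta_{t,x_2}$ have the uniform modulus $\gamma(D)$. With $y_i:=Pu^\eta_{x_i}(t)$, apply Lemma~\ref{lem:vi_sensitivity} with $C=K(t)$, $F_1=F^\eta_{t,x_1}$ and $F_2=F^\eta_{t,x_2}$:
\[
\gamma(D)\norm{y_1-y_2}\le\norm{h(t,x_2,P^{-1}y_2,\eta)-h(t,x_1,P^{-1}y_2,\eta)}\le\alpha_h(D)\norm{x_1-x_2},
\]
where the last step uses Assumption~\ref{ass:parametrized_data}\textbf{(ii)}. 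Since $u^\eta_{x_1}(t)-u^\eta_{x_2}(t)=P^{-1}(y_1-y_2)$, we get $\norm{u^\eta_{x_1}(t)-u^\eta_{x_2}(t)}\le\norm{P^{-1}}\norm{y_1-y_2}$, which gives the bound with $L(D)$.

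\textbf{Expected obstacle.} The argument is a direct transcription of Propositions~\ref{lem:unique_u} and~\ref{lem:lipschitz_u}, so the only subtle point is bookkeeping. The constants $\gamma(D)$ and $\alpha_h(D)$ must be taken for one bounded set $D$ containing $\eta$, and the strong pseudomonotonicity must be applied to $F_1$ only. Lemma~\ref{lem:vi_sensitivity} needs no hypothesis on $F_2$, so no additional compatibility condition arises. If $\alpha_h(D)=0$, the estimate simply gives $u^\eta_{x_1}(t)=u^\eta_{x_2}(t)$, and the argument is unchanged.
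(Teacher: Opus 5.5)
Your proposal is correct and follows the paper's proof. The paper likewise rewrites \eqref{eq:parametrized_static_vi} as $\mathrm{VI}(K(t),F^\eta_{t,x})$ via $y=Pu$, and obtains existence and uniqueness from Assumption~\ref{ass:parametrized_data}\textbf{(v)} together with \cite[Theorem~2.1]{KimVuongKhanh2016}. It then derives the Lipschitz bound from Lemma~\ref{lem:vi_sensitivity}, Assumption~\ref{ass:parametrized_data}\textbf{(ii)}, and an application of $P^{-1}$; your extra bookkeeping (choosing $D\ni\eta$, noting that only $F_1$ must be strongly pseudomonotone) just makes explicit what the paper leaves implicit.
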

\begin{proof}
With $y=Pu$, \eqref{eq:parametrized_static_vi} is
\begin{equation}\label{eq:parametrized_vi_y}
 y\in K(t),\qquad \inner{F^\eta_{t,x}(y)}{v-y}\ge0\quad(v\in K(t)).
\end{equation}
Assumption~\ref{ass:parametrized_data}{\bf (v)} and
\cite[Theorem~2.1]{KimVuongKhanh2016} give existence and uniqueness. If
$y_i=Pu_{x_i}^\eta(t)$, Lemma~\ref{lem:vi_sensitivity} gives
\begin{equation*}
 \gamma(D)\norm{y_1-y_2}
 \le\norm{h(t,x_2,P^{-1}y_2,\eta)-h(t,x_1,P^{-1}y_2,\eta)}
 \le\alpha_h(D)\norm{x_1-x_2},
\end{equation*}
and applying $P^{-1}$ proves the estimate.
\end{proof}

\begin{corollary}[\bf uniform bounds on bounded sets]\label{cor:parametrized_state_to_control_bound}
Assume additionally Assumption~\ref{ass:time_regularity}{\bf (i)}. For every
bounded $D\subset E$ and $R\ge0$, there is $\widehat R(D,R)>0$ such that
\begin{equation*}
 \norm{u_x^\eta(t)}\le\widehat R(D,R)
 \quad(\eta\in D,\ t\in[0,T],\ \norm{x}\le R).
\end{equation*}
\end{corollary}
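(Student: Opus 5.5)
We want a bound on $\norm{u_x^\eta(t)}$ that is uniform in $\eta\in D$, $t\in[0,T]$ and $\norm{x}\le R$. By Lemma~\ref{lem:parametrized_state_to_control},
\[
\norm{u_x^\eta(t)}\le \norm{u_0^\eta(t)}+L(D)R,
\]
so it suffices to bound $u_0^\eta(t)$ uniformly in $(\eta,t)\in D\times[0,T]$.

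For this we compare $y:=Pu_0^\eta(t)$ with a fixed reference point of $K(t)$ that stays bounded in $t$. Set
\[
\bar y(t):=\proj_{K(t)}(0).
\]
Assumption~\ref{ass:time_regularity}{\bf (i)} with $R=0$ gives $\norm{\bar y(t)-\bar y(0)}\le \omega_{K,0}(t)$. Since $\omega_{K,0}$ is continuous on the compact set $[0,T]$, it follows that
\[
\sup_{t\in[0,T]}\norm{\bar y(t)}\le \norm{\proj_{K(0)}(0)}+\max_{[0,T]}\omega_{K,0}=:B_K<\infty.
\]

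Next we use the VI for $y$. It gives $\inner{F^\eta_{t,0}(y)}{\bar y(t)-y}\ge0$. Strong pseudomonotonicity from Assumption~\ref{ass:parametrized_data}{\bf (v)} then yields, exactly as in the derivation of \eqref{eq:ym_local_bound_by_F},
\[
\gamma(D)\norm{\bar y(t)-y}\le \norm{F^\eta_{t,0}(\bar y(t))}.
\]
We bound the right-hand side using the uniform Lipschitz constant $L_F(D)$ and the bound $\norm{F^\eta_{t,0}(0)}=\norm{h(t,0,0,\eta)}\le C_{h,0}(D)$ from {\bf (vi)}:
\[
\norm{F^\eta_{t,0}(\bar y(t))}\le L_F(D)B_K+C_{h,0}(D).
\]
Consequently
\[
\norm{y}\le B_K+\frac{L_F(D)B_K+C_{h,0}(D)}{\gamma(D)}
\]
and
\[
\norm{u_0^\eta(t)}\le \norm{P^{-1}}\,\norm{y}.
\]
Combining this with the first inequality, we may take
\[
\widehat R(D,R):=\norm{P^{-1}}\Bigl(B_K+\tfrac{L_F(D)B_K+C_{h,0}(D)}{\gamma(D)}\Bigr)+L(D)R+1,
\]
where the $+1$ only ensures strict positivity.

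The main subtlety is the uniform-in-$t$ boundedness of the reference point $\bar y(t)$; this is exactly why Assumption~\ref{ass:time_regularity}{\bf (i)} is added. The continuity of the modulus $\omega_{K,0}$ on $[0,T]$ settles it without any continuity of $u$ in $t$. Everything else is uniform in $\eta$ because the constants $\gamma(D)$, $L_F(D)$, $C_{h,0}(D)$ and $\alpha_h(D)$ depend only on $D$.
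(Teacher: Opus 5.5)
Your proof is correct and follows essentially the paper's route: test the frozen VI against $\proj_{K(t)}(0)$, apply strong pseudomonotonicity with modulus $\gamma(D)$, and bound the operator at that point using the uniform Lipschitz constant $L_F(D)$ and $C_{h,0}(D)$. The differences are minor. You absorb the $x$-dependence through the Lipschitz estimate of Lemma~\ref{lem:parametrized_state_to_control}, whereas the paper uses $\alpha_h(D)$ inside $F^\eta_{t,x}$; your route avoids the paper's superfluous $2\norm{P^{-1}}\kappa_K$ term and yields a strictly positive constant. You also explicitly justify $\sup_t\norm{\proj_{K(t)}(0)}<\infty$ via the continuous modulus $\omega_{K,0}$, which the paper only asserts.
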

\begin{proof}
Put $\kappa_K:=\sup_t\norm{\proj_{K(t)}(0)}<\infty$ and
$p_t:=\proj_{K(t)}(0)$. Strong pseudomonotonicity, tested against $p_t$,
yields for $y=Pu_x^\eta(t)$
\begin{equation*}
 \norm{y}\le\kappa_K+\gamma(D)^{-1}\norm{F^\eta_{t,x}(p_t)}.
\end{equation*}
Using Assumption~\ref{ass:parametrized_data}{\bf (ii), (v), (vi)} and
\begin{equation*}
 h(t,0,P^{-1}p_t,\eta)-h(t,0,0,\eta)
 =P^{-1}p_t-\bigl(F^\eta_{t,0}(p_t)-F^\eta_{t,0}(0)\bigr),
\end{equation*}
we obtain
\begin{equation*}
 \norm{F^\eta_{t,x}(p_t)}
 \le\alpha_h(D)R+C_{h,0}(D)+(2\norm{P^{-1}}+L_F(D))\kappa_K.
\end{equation*}
Thus the claim holds with
\begin{equation*}
 \widehat R(D,R):=\norm{P^{-1}}\left[\kappa_K+
 \frac{\alpha_h(D)R+C_{h,0}(D)+(2\norm{P^{-1}}+L_F(D))\kappa_K}{\gamma(D)}
 \right].
\end{equation*}
\end{proof}

\begin{corollary}[\bf bounded-set comparison]\label{cor:parametrized_state_to_control_compare}
Let $D\subset E$ be bounded, $R\ge0$, and
$R^\sharp:=\max\{R,\widehat R(D,R)\}$. Then
\begin{equation*}
 \norm{u_{x_1}^{\eta_1}(t)-u_{x_2}^{\eta_2}(t)}
 \le\frac{\norm{P^{-1}}}{\gamma(D)}\left[
 \alpha_h(D)\norm{x_1-x_2}
 +\gamma_{h,D,R^\sharp}(d_E(\eta_1,\eta_2))\right]
\end{equation*}
for $\eta_i\in D$, $t\in[0,T]$, and $x_i\in\mathbb B_R(0)$.
Consequently, $\eta_n\to\eta$ and $x_n\to x$ imply
$u_{x_n}^{\eta_n}(t)\to u_x^\eta(t)$ for every fixed $t$.
\end{corollary}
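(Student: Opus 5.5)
The plan is to apply Lemma~\ref{lem:vi_sensitivity} to the transformed variables, with the operator carrying the strong pseudomonotonicity evaluated at the solution of the other problem. Fix $t\in[0,T]$, $\eta_1,\eta_2\in D$ and $x_1,x_2\in\mathbb B_R(0)$. Set $y_i:=Pu_{x_i}^{\eta_i}(t)$, so that $y_i$ solves $\mathrm{VI}(K(t),F^{\eta_i}_{t,x_i})$ by \eqref{eq:parametrized_vi_y}. Take $F_1:=F^{\eta_1}_{t,x_1}$. This operator is strongly pseudomonotone on $K(t)$ with modulus $\gamma(D)$ by Assumption~\ref{ass:parametrized_data}{\bf (v)}. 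Take $F_2:=F^{\eta_2}_{t,x_2}$. Lemma~\ref{lem:vi_sensitivity} then gives
\begin{equation*}
 \gamma(D)\norm{y_1-y_2}\le\norm{F^{\eta_1}_{t,x_1}(y_2)-F^{\eta_2}_{t,x_2}(y_2)}
 =\norm{h(t,x_2,P^{-1}y_2,\eta_2)-h(t,x_1,P^{-1}y_2,\eta_1)}.
\end{equation*}
The $P^{-1}y_2$ terms cancel, as in the proof of Lemma~\ref{lem:parametrized_state_to_control}.

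Next I split the right-hand side by inserting $h(t,x_1,P^{-1}y_2,\eta_2)$. The $x$-increment $\norm{h(t,x_2,P^{-1}y_2,\eta_2)-h(t,x_1,P^{-1}y_2,\eta_2)}$ is at most $\alpha_h(D)\norm{x_1-x_2}$ by Assumption~\ref{ass:parametrized_data}{\bf (ii)}. The parameter increment $\norm{h(t,x_1,P^{-1}y_2,\eta_2)-h(t,x_1,P^{-1}y_2,\eta_1)}$ is handled by Assumption~\ref{ass:parametrized_data}{\bf (iii)}, with radius $R^\sharp$. To use it I must check that both arguments lie in $\mathbb B_{R^\sharp}(0)$. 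For the state this holds since $\norm{x_1}\le R\le R^\sharp$. For the control, $P^{-1}y_2=u_{x_2}^{\eta_2}(t)$ has norm at most $\widehat R(D,R)\le R^\sharp$ by Corollary~\ref{cor:parametrized_state_to_control_bound}, because $\eta_2\in D$ and $\norm{x_2}\le R$.

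This membership check is the only delicate point. The evaluation point has to be $y_2$, the solution of the second problem, precisely because its preimage $P^{-1}y_2$ is an actual control and so is covered by the uniform a priori bound. Finally, I apply $\norm{u_{x_1}^{\eta_1}(t)-u_{x_2}^{\eta_2}(t)}=\norm{P^{-1}(y_1-y_2)}\le\norm{P^{-1}}\norm{y_1-y_2}$, which yields the stated estimate.

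For the consequence, let $\eta_n\to\eta$ and $x_n\to x$. The sequences $\{\eta_n\}\cup\{\eta\}$ form a bounded set $D$ in $E$, and $\{x_n\}\cup\{x\}$ lie in some ball $\mathbb B_R(0)$. Applying the estimate with these fixed $D$ and $R$, the right-hand side is bounded by a multiple of $\alpha_h(D)\norm{x_n-x}+\gamma_{h,D,R^\sharp}(d_E(\eta_n,\eta))$. This tends to zero because the modulus $\gamma_{h,D,R^\sharp}$ vanishes at zero, so $u_{x_n}^{\eta_n}(t)\to u_x^\eta(t)$.
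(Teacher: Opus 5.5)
Your proposal is correct and follows the paper's proof. You apply Lemma~\ref{lem:vi_sensitivity} with $F_1=F^{\eta_1}_{t,x_1}$ and $F_2=F^{\eta_2}_{t,x_2}$ evaluated at $y_2$, use Corollary~\ref{cor:parametrized_state_to_control_bound} to place $P^{-1}y_2$ in $\mathbb B_{R^\sharp}(0)$, bound the $h$-difference by Assumption~\ref{ass:parametrized_data}{\bf (ii)}--{\bf (iii)}, and apply $P^{-1}$; the only difference is that you write out the splitting through $h(t,x_1,P^{-1}y_2,\eta_2)$ and the ball-membership checks, which the paper compresses into one line.
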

\begin{proof}
For $y_i=Pu_{x_i}^{\eta_i}(t)$, the uniform bound places $P^{-1}y_2$ in
$\mathbb B_{R^\sharp}(0)$. Lemma~\ref{lem:vi_sensitivity} therefore gives
\begin{align*}
 \gamma(D)\norm{y_1-y_2}
 &\le\norm{F^{\eta_1}_{t,x_1}(y_2)-F^{\eta_2}_{t,x_2}(y_2)}\\
 &\le\alpha_h(D)\norm{x_1-x_2}
 +\gamma_{h,D,R^\sharp}(d_E(\eta_1,\eta_2)).
\end{align*}
Applying $P^{-1}$ proves both assertions.
\end{proof}

\begin{corollary}[\bf continuity along continuous trajectories]\label{cor:parametrized_state_to_control_time}
If $\eta\in E$ and $x\in C([0,T];H)$, then
$t\mapsto u_{x(t)}^\eta(t)$ belongs to $C([0,T];H)$.
\end{corollary}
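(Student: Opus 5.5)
The plan is to reduce the statement to Proposition~\ref{lem:continuity_u} applied to the frozen data $h^\eta(t,x,u):=h(t,x,u,\eta)$ with $\eta$ fixed. That proposition needs Assumptions~\ref{ass:operators}, \ref{ass:strong_pseudomono}, \ref{ass:h_lipschitz_x}, and~\ref{ass:time_regularity}. Taking the bounded set $D:=\{\eta\}$, I will check that each hypothesis holds for $h^\eta$ and then quote the proposition verbatim.

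The checks are as follows.
\begin{itemize}
\item[\textbf{(a)}] The Carath\'eodory property of $h^\eta$ is Assumption~\ref{ass:parametrized_data}\textbf{(i)}.
\item[\textbf{(b)}] The strong pseudomonotonicity of $F^\eta_{t,x}$ on $K(t)$ with modulus $\gamma(\{\eta\})$, and its Lipschitz continuity on $H$ with modulus $L_F(\{\eta\})$, both uniform in $(t,x)$, come from item~\textbf{(v)}. This is exactly Assumption~\ref{ass:strong_pseudomono} for $h^\eta$.
\item[\textbf{(c)}] The state-Lipschitz bound with constant $\alpha_h(\{\eta\})$ is item~\textbf{(ii)}. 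If $\alpha_h(\{\eta\})=0$, I replace it by any positive constant, since Assumption~\ref{ass:h_lipschitz_x} asks for $\alpha_h>0$.
\item[\textbf{(d)}] The time modulus $\omega_{h,\{\eta\},R}$ on $\mathbb B_R(0)$ is item~\textbf{(iv)}, giving Assumption~\ref{ass:time_regularity}\textbf{(ii)}.
\item[\textbf{(e)}] Assumption~\ref{ass:time_regularity}\textbf{(i)} on $K$ is taken as standing, as in Corollary~\ref{cor:parametrized_state_to_control_bound}. It does not depend on $\eta$, since $K$ is unchanged.
\end{itemize}

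With these checks, Proposition~\ref{lem:unique_u} applied to $h^\eta$ recovers $u^\eta_x(t)$ from Lemma~\ref{lem:parametrized_state_to_control}. Proposition~\ref{lem:continuity_u} then gives continuity of $t\mapsto u^\eta_{x(t)}(t)$.

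The only delicate point is whether the proof of Proposition~\ref{lem:continuity_u} used anything beyond these listed assumptions. For instance, it might have relied on the Lipschitz constant being global in $x$ or on a modulus not tied to a bounded range. Rereading it, it used only Lipschitz continuity of $F$ in $y$ (constant $L_F$), Lipschitz continuity of $h$ in $x$ (constant $\alpha_h$), and the moduli $\omega_{K,R}$ and $\omega_{h,R}$ on balls. The radii there were chosen from boundedness of $\tilde y_m$, $y_m$, and $x(t_m)$, and Claim~2 establishes that boundedness independently. So the transfer is direct.

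If a self-contained argument is preferred, I would repeat Claims~1--3 of that proof with $F_{t,x}$ replaced by $F^\eta_{t,x}$. The key inequality $\gamma s_m^2\le a_m+b_m s_m$ closes via Lemma~\ref{techni}. Alternatively, Corollary~\ref{cor:parametrized_state_to_control_bound} already supplies the uniform bound on $y_m$, which simplifies Claim~2.
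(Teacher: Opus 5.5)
Your proposal is correct and matches the paper's argument: the paper fixes a bounded $D\ni\eta$ and reruns Claims~1--3 of Proposition~\ref{lem:continuity_u} with the constants $\gamma(D)$, $L_F(D)$, $\alpha_h(D)$ and the moduli $\omega_{h,D,R}$, $\omega_{K,R}$, closing with $\gamma(D)s_m^2\le a_m+b_ms_m$ and Lemma~\ref{techni}; your reduction with $D=\{\eta\}$ obtains the same conclusion by quoting the proposition directly. One small point beyond the $\alpha_h>0$ adjustment you already made: Assumption~\ref{ass:time_regularity}\textbf{(ii)} asks for a \emph{continuous} modulus, whereas item~\textbf{(iv)} only supplies one tending to zero at $0^+$, so the quotation is not literally verbatim; as your rereading of the proof observes, only the limit at $0^+$ is ever used, so nothing breaks.
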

\begin{proof}
Choose a bounded $D\ni\eta$ and a radius $R^\star$ containing the ranges of
$x$ and $u_{x(\cdot)}^\eta(\cdot)$, as supplied by
Corollary~\ref{cor:parametrized_state_to_control_bound}. For $t_m\to t$, set
\begin{equation*}
 y_m:=Pu_{x(t_m)}^\eta(t_m),\quad y:=Pu_{x(t)}^\eta(t),\quad
 \widetilde y_m:=\proj_{K(t_m)}y,\quad
 \widehat y_m:=\proj_{K(t)}y_m.
\end{equation*}
Assumption~\ref{ass:time_regularity}{\bf (i)} gives
$\widetilde y_m\to y$. The parameterized counterparts of the two estimates
that drive Proposition~\ref{lem:continuity_u} are
\begin{align*}
 \norm{\widetilde y_m-y_m}
 &\le\gamma(D)^{-1}
 \norm{F^\eta_{t_m,x(t_m)}(\widetilde y_m)},\\
 \norm{(F^\eta_{t_m,x(t_m)}-F^\eta_{t,x(t)})(\widetilde y_m)}
 &\le\alpha_h(D)\norm{x(t_m)-x(t)}
 +\omega_{h,D,R^\star}(|t_m-t|)\longrightarrow0.
\end{align*}
The first right-hand side is bounded by Assumption
\ref{ass:parametrized_data}{\bf (ii), (iv), (v)}. More precisely, after
enlarging $R^\star$ if necessary,
\begin{align*}
 \norm{F^\eta_{t_m,x(t_m)}(\widetilde y_m)}
 &\le\norm{P^{-1}}\norm{\widetilde y_m}
 +\alpha_h(D)\norm{x(t_m)-x(t)}
 +\omega_{h,D,R^\star}(|t_m-t|)\\
 &\quad+\norm{h(t,x(t),P^{-1}y,\eta)}
 +(L_F(D)+\norm{P^{-1}})\norm{\widetilde y_m-y},
\end{align*}
whose right-hand side is bounded. Thus $\{y_m\}$ is bounded;
hence, for some common radius $R_K$ and all sufficiently large $m$, the
set-continuity assumption gives
\begin{equation*}
 \norm{\widehat y_m-y_m}
 =\norm{\proj_{K(t)}(y_m)-\proj_{K(t_m)}(y_m)}
 \le\omega_{K,R_K}(|t_m-t|)\longrightarrow0.
\end{equation*}
Because $y$ solves the limiting VI and $\widehat y_m\in K(t)$,
\begin{equation*}
 \inner{F^\eta_{t,x(t)}(y)}{y-y_m}
 \le \norm{F^\eta_{t,x(t)}(y)}\norm{\widehat y_m-y_m}.
\end{equation*}
The Lipschitz property of $F^\eta_{t,x(t)}$ also yields
\begin{align*}
 \inner{F^\eta_{t,x(t)}(\widetilde y_m)}{y-y_m}
 &\le\norm{F^\eta_{t,x(t)}(y)}\norm{\widehat y_m-y_m}\\
 &\quad+L_F(D)\norm{\widetilde y_m-y}^2
 +L_F(D)\norm{\widetilde y_m-y}\norm{\widetilde y_m-y_m}.
\end{align*}
Adding the analogous estimate for
$\inner{F^\eta_{t,x(t)}(\widetilde y_m)}{\widetilde y_m-y}$ and comparing
the operators at $t_m$ and $t$ gives the same closing inequality as in
Proposition~\ref{lem:continuity_u}, with bounded-parameter constants.
Namely,
\begin{align*}
 a_m&:=\norm{F^\eta_{t,x(t)}(y)}
 \bigl(\norm{\widehat y_m-y_m}+\norm{\widetilde y_m-y}\bigr)
 +2L_F(D)\norm{\widetilde y_m-y}^2,\\
 b_m&:=L_F(D)\norm{\widetilde y_m-y}
 +\norm{(F^\eta_{t_m,x(t_m)}-F^\eta_{t,x(t)})(\widetilde y_m)}.
\end{align*}
The preceding projection and operator-difference estimates imply
$a_m,b_m\to0$. Therefore
\begin{equation*}
 \gamma(D)s_m^2\le a_m+b_ms_m,\qquad
 s_m:=\norm{y_m-\widetilde y_m}.
\end{equation*}
Lemma~\ref{techni} gives $s_m\to0$, whence $y_m\to y$ and the result follows
by the boundedness of $P^{-1}$.
\end{proof}

Combining the preceding two corollaries also shows that, whenever
$\eta_n\to\eta$ and $x_n\to x$ in $C([0,T];H)$ with uniformly bounded
ranges, the comparison estimate is uniform in time. In fact, for a common
bounded parameter set $D$ and a corresponding radius $R^\sharp$,
\begin{align*}
 &\sup_{t\in[0,T]}
 \norm{u_{x_n(t)}^{\eta_n}(t)-u_{x(t)}^\eta(t)}\\
 &\qquad\le\frac{\norm{P^{-1}}}{\gamma(D)}
 \left[\alpha_h(D)\norm{x_n-x}_{C([0,T];H)}
 +\gamma_{h,D,R^\sharp}(d_E(\eta_n,\eta))\right]\longrightarrow0.
\end{align*}
Consequently,
\begin{equation*}
 t\mapsto u_{x_n(t)}^{\eta_n}(t)\longrightarrow
 t\mapsto u_{x(t)}^\eta(t)\qquad\text{in }C([0,T];H).
\end{equation*}

Define the parameterized reduced mapping
\begin{equation*}
 \widetilde f_\eta(t,x):=f(t,x,u_x^\eta(t),\eta).
\end{equation*}
\begin{lemma}[\bf continuity of the reduced mapping]\label{lem:parametrized_reduced_stability}
Under Assumptions~\ref{ass:operators},
\ref{ass:time_regularity}{\bf (i)}, and~\ref{ass:parametrized_data},
$\widetilde f_\eta$ is Carath\'{e}odory for each $\eta$. On every bounded
$D\subset E$ it is uniformly Lipschitz in $x$, with constant
\begin{equation*}
 \widetilde L(D):=\alpha_f(D)+\beta_f(D)L(D).
\end{equation*}
Moreover, for every $R\ge0$ and $\eta_n\to\eta$ in $D$,
\begin{equation*}
 \sup_{t\in[0,T]}\sup_{x\in\mathbb B_R(0)}
 \norm{\widetilde f_{\eta_n}(t,x)-\widetilde f_\eta(t,x)}\longrightarrow0.
\end{equation*}
More precisely, with $R^\sharp:=\max\{R,\widehat R(D,R)\}$, the left-hand
side is bounded by the parameter modulus
\begin{equation}\label{eq:reduced_parameter_modulus}
 \Theta_{D,R}(r):=
 \frac{\beta_f(D)\norm{P^{-1}}}{\gamma(D)}
 \gamma_{h,D,R^\sharp}(r)+\gamma_{f,D,R^\sharp}(r),
 \qquad \Theta_{D,R}(r)\to0\quad(r\to0^+).
\end{equation}
\end{lemma}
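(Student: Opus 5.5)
The plan is to mirror the proof of Lemma~\ref{lem:lipschitz_reduced}, with the parameter-uniform ingredients supplied by Lemma~\ref{lem:parametrized_state_to_control} and Corollaries~\ref{cor:parametrized_state_to_control_bound}--\ref{cor:parametrized_state_to_control_compare}.

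\emph{Carath\'eodory property.} Fix $\eta\in E$ and choose a bounded $D\ni\eta$.
\begin{itemize}
\item[(a)] For fixed $x\in H$, apply Corollary~\ref{cor:parametrized_state_to_control_time} to the constant trajectory $x(\cdot)\equiv x$. This gives that $t\mapsto u_x^\eta(t)$ is continuous, hence strongly measurable.
\item[(b)] The map $t\mapsto f(t,x,u_x^\eta(t),\eta)$ is then strongly measurable. I would justify this by Lemma~\ref{lem:caratheodory_superposition_prelim}, using the separable metric space $X=H\times H$ and the continuous curve $t\mapsto(x,u_x^\eta(t))$.
\item[(c)] For a.e.\ $t$, the map $x\mapsto u_x^\eta(t)$ is Lipschitz by Lemma~\ref{lem:parametrized_state_to_control}, and $(x,u)\mapsto f(t,x,u,\eta)$ is continuous. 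Their composition $x\mapsto\widetilde f_\eta(t,x)$ is therefore continuous.
\end{itemize}

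\emph{Uniform Lipschitz bound in $x$.} For $\eta\in D$, split
\[
\widetilde f_\eta(t,x_1)-\widetilde f_\eta(t,x_2)
=\bigl[f(t,x_1,u^\eta_{x_1}(t),\eta)-f(t,x_2,u^\eta_{x_1}(t),\eta)\bigr]
+\bigl[f(t,x_2,u^\eta_{x_1}(t),\eta)-f(t,x_2,u^\eta_{x_2}(t),\eta)\bigr].
\]
Assumption~\ref{ass:parametrized_data}{\bf (ii)} bounds the first bracket by $\alpha_f(D)\norm{x_1-x_2}$. It bounds the second by $\beta_f(D)\norm{u^\eta_{x_1}(t)-u^\eta_{x_2}(t)}$, which is at most $\beta_f(D)L(D)\norm{x_1-x_2}$ by Lemma~\ref{lem:parametrized_state_to_control}. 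Adding the two gives the constant $\widetilde L(D)$.

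\emph{Parameter modulus.} Fix $R\ge0$, $t\in[0,T]$, $x\in\mathbb B_R(0)$ and $\eta_1,\eta_2\in D$. Split again:
\[
\widetilde f_{\eta_1}(t,x)-\widetilde f_{\eta_2}(t,x)
=\bigl[f(t,x,u^{\eta_1}_x(t),\eta_1)-f(t,x,u^{\eta_2}_x(t),\eta_1)\bigr]
+\bigl[f(t,x,u^{\eta_2}_x(t),\eta_1)-f(t,x,u^{\eta_2}_x(t),\eta_2)\bigr].
\]
\begin{itemize}
\item[(a)] The first bracket is at most $\beta_f(D)\norm{u^{\eta_1}_x(t)-u^{\eta_2}_x(t)}$. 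Corollary~\ref{cor:parametrized_state_to_control_compare} with $x_1=x_2=x$ bounds this by $\beta_f(D)\norm{P^{-1}}\gamma(D)^{-1}\gamma_{h,D,R^\sharp}(d_E(\eta_1,\eta_2))$.
\item[(b)] For the second bracket, $\norm{x}\le R\le R^\sharp$, and Corollary~\ref{cor:parametrized_state_to_control_bound} gives $\norm{u^{\eta_2}_x(t)}\le\widehat R(D,R)\le R^\sharp$. Both arguments therefore lie in $\mathbb B_{R^\sharp}(0)$, so Assumption~\ref{ass:parametrized_data}{\bf (iii)} bounds it by $\gamma_{f,D,R^\sharp}(d_E(\eta_1,\eta_2))$.
\end{itemize}
Summing gives $\Theta_{D,R}(d_E(\eta_1,\eta_2))$ uniformly in $t$ and $x$. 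Taking $\eta_1=\eta_n\to\eta=\eta_2$ yields the stated uniform convergence, since both moduli vanish at $0^+$.

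\emph{Expected obstacle.} The only delicate point is the measurability step for fixed $x$. It relies on Corollary~\ref{cor:parametrized_state_to_control_time}, and hence on Assumption~\ref{ass:time_regularity}{\bf (i)} together with the parameterized time modulus in Assumption~\ref{ass:parametrized_data}{\bf (iv)}, both of which are assumed in the statement. A second point to check is that the moduli in Assumption~\ref{ass:parametrized_data}{\bf (iii)} are uniform in $t$, so that taking the supremum over $t$ and $x$ is legitimate. They are, since the assumption is stated for all $t$.
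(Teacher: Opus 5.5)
Your proposal is correct and follows essentially the same route as the paper. You get the Carath\'eodory property from Corollary~\ref{cor:parametrized_state_to_control_time} (applied to a constant trajectory) together with Lemma~\ref{lem:parametrized_state_to_control}, and you use the same two-term split for the Lipschitz constant $\widetilde L(D)$. For the parameter modulus you use exactly the paper's split (yours with $\eta_1=\eta_n$, $\eta_2=\eta$), bounded by Corollary~\ref{cor:parametrized_state_to_control_compare} and Assumption~\ref{ass:parametrized_data}{\bf (iii)} on $\mathbb B_{R^\sharp}(0)$. Two steps the paper leaves implicit are spelled out in your version: measurability via Lemma~\ref{lem:caratheodory_superposition_prelim}, and the use of Corollary~\ref{cor:parametrized_state_to_control_bound} to place $u_x^{\eta_2}(t)$ in $\mathbb B_{R^\sharp}(0)$.
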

\begin{proof}
For fixed $x$, Corollary~\ref{cor:parametrized_state_to_control_time} makes
$t\mapsto u_x^\eta(t)$ continuous, so
$t\mapsto f(t,x,u_x^\eta(t),\eta)$ is measurable. For almost every fixed
$t$, Lemma~\ref{lem:parametrized_state_to_control} and Assumption
\ref{ass:parametrized_data}{\bf (i)} make the same composition continuous in
$x$. Hence $\widetilde f_\eta$ is Carath\'{e}odory. Moreover,
\begin{align*}
 \norm{\widetilde f_\eta(t,x_1)-\widetilde f_\eta(t,x_2)}
 &\le\alpha_f(D)\norm{x_1-x_2}
 +\beta_f(D)\norm{u_{x_1}^\eta(t)-u_{x_2}^\eta(t)}\\
 &\le\widetilde L(D)\norm{x_1-x_2}.
\end{align*}
For the parameter dependence, let
$R^\sharp:=\max\{R,\widehat R(D,R)\}$. Corollary
\ref{cor:parametrized_state_to_control_compare} gives the parameter-specific
bound. To see separately where its two terms enter, write
\begin{align*}
 \norm{\widetilde f_{\eta_n}(t,x)-\widetilde f_\eta(t,x)}
 &\le\norm{f(t,x,u_x^{\eta_n}(t),\eta_n)
            -f(t,x,u_x^\eta(t),\eta_n)}\\
 &\quad+\norm{f(t,x,u_x^\eta(t),\eta_n)
            -f(t,x,u_x^\eta(t),\eta)}.
\end{align*}
The first term is controlled by $\beta_f(D)$ times the frozen-response
difference, whereas the second is controlled directly by the parameter
modulus of $f$. Consequently,
\begin{align*}
 \norm{\widetilde f_{\eta_n}(t,x)-\widetilde f_\eta(t,x)}
 &\le \frac{\beta_f(D)\norm{P^{-1}}}{\gamma(D)}
 \gamma_{h,D,R^\sharp}(d_E(\eta_n,\eta))\\
 &\quad+\gamma_{f,D,R^\sharp}(d_E(\eta_n,\eta)),
\end{align*}
uniformly on $[0,T]\times\mathbb B_R(0)$; this is precisely
\eqref{eq:reduced_parameter_modulus}.
\end{proof}

\begin{theorem}[\bf well-posedness for the parameterized problem]\label{thm:parametrized_wellposed}
Under Assumptions~\ref{ass:operators},
\ref{ass:time_regularity}{\bf (i)}, and~\ref{ass:parametrized_data}, each
$(\zeta,\eta)\in\Lambda\times E$ generates a unique mild solution pair
$(x_{\zeta,\eta},u_{\zeta,\eta})\in C([0,T];H)^2$. The state satisfies
\begin{equation}\label{eq:parametrized_problem}
 x_{\zeta,\eta}(t)=e^{At}\zeta+\int_0^t e^{A(t-s)}
 \widetilde f_\eta(s,x_{\zeta,\eta}(s))\,\ud s,
\end{equation}
and $u_{\zeta,\eta}(t)=u_{x_{\zeta,\eta}(t)}^\eta(t)$.
\end{theorem}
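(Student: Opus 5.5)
The plan is to reduce to the nominal argument of Theorem~\ref{thm:global_existence}, applied for each fixed parameter. Fix $(\zeta,\eta)\in\Lambda\times E$ and take the bounded set $D:=\{\eta\}$. With this choice, Assumption~\ref{ass:parametrized_data} gives the fixed-parameter versions of the nominal hypotheses:
\begin{itemize}
\item[] part \textbf{(i)} gives the Carath\'eodory property of Assumption~\ref{ass:operators}\textbf{(iv)};
\item[] part \textbf{(ii)} gives Assumptions~\ref{ass:h_lipschitz_x} and~\ref{ass:f_lipschitz} with constants $\alpha_h(D),\alpha_f(D),\beta_f(D)$;
\item[] part \textbf{(iv)} replaces Assumption~\ref{ass:time_regularity}\textbf{(ii)};
\item[] part \textbf{(v)} gives Assumption~\ref{ass:strong_pseudomono} with $\gamma(D),L_F(D)$;
\item[] part \textbf{(vii)} gives $f(\cdot,0,0,\eta)\in L^1(0,T;H)$, since a bounded measurable function on $[0,T]$ is integrable.
\end{itemize}
Rather than invoke Theorem~\ref{thm:global_existence} as a black box, I would rerun its three claims. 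Every ingredient they use already has a parameterized counterpart proved above.

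\textbf{Step 1: the solution operator is well defined.} Define $\mathcal S_{\zeta,\eta}x(t):=e^{At}\zeta+\int_0^t e^{A(t-s)}\widetilde f_\eta(s,x(s))\,\ud s$ on $\mathcal X=C([0,T];H)$. By Lemma~\ref{lem:parametrized_reduced_stability}, $\widetilde f_\eta$ is Carath\'eodory. For the integrable majorant, I would use Corollary~\ref{cor:parametrized_state_to_control_bound} (or alternatively continuity of $u_0^\eta$ from Corollary~\ref{cor:parametrized_state_to_control_time} together with Lemma~\ref{lem:parametrized_state_to_control}). With $R:=\norm{x}_{C([0,T];H)}$ this gives
\begin{equation*}
\norm{\widetilde f_\eta(t,x(t))}\le \alpha_f(D)R+\beta_f(D)\widehat R(D,R)+C_{f,0}(D).
\end{equation*}
The bound is in fact a constant, so Lemmas~\ref{lem:caratheodory_superposition_prelim} and~\ref{lem:semigroup_convolution_prelim} give $\mathcal S_{\zeta,\eta}x\in\mathcal X$.

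\textbf{Step 2: contraction in a Bielecki norm.} Using $\norm{e^{At}}\le Me^{\omega t}$ and the Lipschitz constant $\widetilde L(D)$ from Lemma~\ref{lem:parametrized_reduced_stability}, the computation in Claim 2 of Theorem~\ref{thm:global_existence} gives
\begin{equation*}
\norm{\mathcal S_{\zeta,\eta}x_1-\mathcal S_{\zeta,\eta}x_2}_\lambda\le \frac{M\widetilde L(D)}{\lambda-\omega}\norm{x_1-x_2}_\lambda .
\end{equation*}
Choosing $\lambda>\omega+M\widetilde L(D)$ makes $\mathcal S_{\zeta,\eta}$ a contraction. Banach's theorem then yields a unique fixed point $x_{\zeta,\eta}$, which is \eqref{eq:parametrized_problem}.

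\textbf{Step 3: recover the algebraic component and prove uniqueness of the pair.} Set $u_{\zeta,\eta}(t):=u^\eta_{x_{\zeta,\eta}(t)}(t)$. Lemma~\ref{lem:parametrized_state_to_control} shows that it satisfies \eqref{eq:parametrized_static_vi}, which is equivalent to the normal cone inclusion for every $t$. Corollary~\ref{cor:parametrized_state_to_control_time} shows that it is continuous. Substituting $\widetilde f_\eta(s,x(s))=f(s,x(s),u(s),\eta)$ recovers the mild state equation.

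For uniqueness, let $(x,u)$ be any mild solution pair. For a.e.\ $t$, uniqueness in Lemma~\ref{lem:parametrized_state_to_control} forces $u(t)=u^\eta_{x(t)}(t)$. Both sides are continuous, so the equality holds for every $t$. Hence $x$ is a fixed point of $\mathcal S_{\zeta,\eta}$ and equals $x_{\zeta,\eta}$.

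\textbf{Main obstacle.} The only delicate point is the integrable majorant in Step 1, i.e.\ making sure the state-to-control map grows at most linearly uniformly in $t$. Corollary~\ref{cor:parametrized_state_to_control_bound} handles this, but it needs $\kappa_K=\sup_t\norm{\proj_{K(t)}(0)}<\infty$. I would justify this from Assumption~\ref{ass:time_regularity}\textbf{(i)} with $R=0$: the map $t\mapsto\proj_{K(t)}(0)$ is then continuous on the compact interval $[0,T]$, hence bounded.
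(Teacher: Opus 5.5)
Your proposal is correct and follows essentially the same route as the paper. Both arguments fix a bounded $D\ni\eta$, take the Carath\'eodory property and the constant $\widetilde L(D)$ from Lemma~\ref{lem:parametrized_reduced_stability}, contract in the Bielecki norm with $\lambda>\omega+M\widetilde L(D)$, and recover the continuous algebraic component from Corollary~\ref{cor:parametrized_state_to_control_time}. The differences are minor: you bound the integrand through Corollary~\ref{cor:parametrized_state_to_control_bound} rather than through continuity of $u_0^\eta$, and you spell out two steps the paper leaves implicit, namely the passage from a.e.\ to everywhere in the uniqueness argument and the finiteness of $\kappa_K$.
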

\begin{proof}
Fix a bounded $D\ni\eta$. Lemma~\ref{lem:parametrized_reduced_stability}
gives the Carath\'{e}odory property and the constant $\widetilde L(D)$.
Since $u_0^\eta$ is continuous and Assumption
\ref{ass:parametrized_data}{\bf (vii)} holds,
\begin{equation*}
 \norm{\widetilde f_\eta(t,0)}
 =\norm{f(t,0,u_0^\eta(t),\eta)}
 \le\beta_f(D)\norm{u_0^\eta(t)}+C_{f,0}(D),
\end{equation*}
which is integrable on $[0,T]$. Hence the operator
\begin{equation*}
 (\mathcal P_{\zeta,\eta}z)(t):=e^{At}\zeta+
 \int_0^t e^{A(t-s)}\widetilde f_\eta(s,z(s))\,\ud s
\end{equation*}
maps $C([0,T];H)$ into itself. The Bielecki estimate from
Theorem~\ref{thm:global_existence}, with
$\lambda>\omega+M\widetilde L(D)$, is
\begin{equation*}
 \norm{\mathcal P_{\zeta,\eta}z_1-\mathcal P_{\zeta,\eta}z_2}_\lambda
 \le\frac{M\widetilde L(D)}{\lambda-\omega}\norm{z_1-z_2}_\lambda,
 \qquad \frac{M\widetilde L(D)}{\lambda-\omega}<1.
\end{equation*}
Its unique fixed point solves \eqref{eq:parametrized_problem}; the continuous
algebraic component is recovered from
Corollary~\ref{cor:parametrized_state_to_control_time}. Uniqueness follows
from the same contraction and Lemma~\ref{lem:parametrized_state_to_control}.
\end{proof}

\begin{lemma}[\bf uniform a priori bounds on bounded parameter sets]\label{lem:parametrized_apriori}
Under the assumptions of Theorem~\ref{thm:parametrized_wellposed}, every
bounded $B\subset\Lambda\times E$ admits $R_B>0$ such that all corresponding
solution pairs take values in $\mathbb B_{R_B}(0)^2$.
\end{lemma}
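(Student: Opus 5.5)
Since $B\subset\Lambda\times E$ is bounded, I will first fix constants $\rho_B$ and a bounded set $D\subset E$ with $\norm{\zeta}\le\rho_B$ and $\eta\in D$ for every $(\zeta,\eta)\in B$; here $D$ is the projection of $B$ onto $E$, which is bounded for the metric $d_{\Lambda\times E}$. All constants below depend only on $D$, $\rho_B$, $T$ and the semigroup constants $M,\omega$ from Lemma~\ref{lem:semigroup_bound_prelim}. The plan is to bound the state component by a Gronwall argument applied to the reduced mild equation \eqref{eq:parametrized_problem}, and then to transfer that bound to the algebraic component via Corollary~\ref{cor:parametrized_state_to_control_bound}.

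\textbf{Step 1.} I need a bound on $\norm{\widetilde f_\eta(t,0)}$ that is uniform in $\eta\in D$ and $t\in[0,T]$. By Assumption~\ref{ass:parametrized_data}{\bf (ii),(vii)},
\[
\norm{\widetilde f_\eta(t,0)}\le \beta_f(D)\norm{u^\eta_0(t)}+C_{f,0}(D).
\]
Corollary~\ref{cor:parametrized_state_to_control_bound} with $R=0$ gives $\norm{u_0^\eta(t)}\le\widehat R(D,0)$. Hence
\[
\norm{\widetilde f_\eta(t,0)}\le c_D:=\beta_f(D)\widehat R(D,0)+C_{f,0}(D).
\]
This bound is uniform and pointwise, not merely integrable. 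That matters because the constant $\kappa_K$ appearing in $\widehat R$ is finite, which follows from Assumption~\ref{ass:time_regularity}{\bf (i)} and compactness of $[0,T]$. I expect the finiteness of $\kappa_K$ to be the only subtle point, so I will verify it. Fix $t$ and $s$ with $|t-s|$ small. Then $\norm{\proj_{K(t)}(0)-\proj_{K(s)}(0)}\le\omega_{K,0}(|t-s|)$. Covering $[0,T]$ by finitely many small intervals shows $\sup_t\norm{\proj_{K(t)}(0)}<\infty$.

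\textbf{Step 2.} Combining Step~1 with the Lipschitz bound $\widetilde L(D)$ from Lemma~\ref{lem:parametrized_reduced_stability}, I obtain
\[
\norm{\widetilde f_\eta(s,x(s))}\le \widetilde L(D)\norm{x(s)}+c_D.
\]
Inserting this into \eqref{eq:parametrized_problem} and using $\norm{e^{At}}\le Me^{\omega t}$ gives
\[
\norm{x(t)}\le Me^{\omega t}\rho_B+Mc_D\Phi_\omega(t)+M\widetilde L(D)\int_0^te^{\omega(t-s)}\norm{x(s)}\,\ud s.
\]
Set $z(t):=e^{-\omega t}\norm{x(t)}$, which is continuous. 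As in the proof of Theorem~\ref{thm:continuous_dependence},
\[
z(t)\le M\rho_B+Mc_D\textstyle\int_0^T e^{-\omega s}\,\ud s+M\widetilde L(D)\int_0^t z(s)\,\ud s.
\]
Lemma~\ref{lem:gronwall_prelim} then bounds $z$, and multiplying by $e^{\omega t}\le e^{\omega_+T}$ yields $\norm{x_{\zeta,\eta}(t)}\le R_x$ for all $t\in[0,T]$ and $(\zeta,\eta)\in B$, with $R_x$ depending only on $B$.

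\textbf{Step 3.} Since $u_{\zeta,\eta}(t)=u^\eta_{x_{\zeta,\eta}(t)}(t)$ by Theorem~\ref{thm:parametrized_wellposed}, Corollary~\ref{cor:parametrized_state_to_control_bound} applied with $R=R_x$ gives $\norm{u_{\zeta,\eta}(t)}\le\widehat R(D,R_x)$. Taking $R_B:=\max\{R_x,\widehat R(D,R_x),1\}$ completes the argument. No step is delicate apart from the uniformity of $\kappa_K$ checked in Step~1.
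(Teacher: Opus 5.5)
Your proposal is correct and follows essentially the same route as the paper. You bound $\widetilde f_\eta(t,0)$ uniformly via Corollary~\ref{cor:parametrized_state_to_control_bound} with $R=0$ and Assumption~\ref{ass:parametrized_data}{\bf (vii)}, apply Gronwall to $z(t)=e^{-\omega t}\norm{x_{\zeta,\eta}(t)}$, and transfer the state bound to the algebraic component through the same corollary with $R=R_x$. Your extra check that $\kappa_K=\sup_t\norm{\proj_{K(t)}(0)}<\infty$ fills in a point the paper only asserts. It follows even more directly from $\norm{\proj_{K(t)}(0)}\le\norm{\proj_{K(0)}(0)}+\omega_{K,0}(t)$ and the continuity of $\omega_{K,0}$ on $[0,T]$.
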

\begin{proof}
Let $R_{\Lambda,B}:=\sup_{(\zeta,\eta)\in B}\norm{\zeta}$ and let $D_B$ be
the projection of $B$ onto $E$. Corollary
\ref{cor:parametrized_state_to_control_bound} and Assumption
\ref{ass:parametrized_data}{\bf (vii)} give
\begin{equation*}
 \norm{\widetilde f_\eta(t,0)}\le
 C_B:=\beta_f(D_B)\widehat R(D_B,0)+C_{f,0}(D_B),
\end{equation*}
and hence
$\norm{\widetilde f_\eta(t,x)}\le C_B+\widetilde L(D_B)\norm{x}$.
The mild equation therefore gives, uniformly for $(\zeta,\eta)\in B$,
\begin{align*}
 \norm{x_{\zeta,\eta}(t)}
 &\le Me^{\omega t}R_{\Lambda,B}
 +MC_B\int_0^t e^{\omega(t-s)}\,\ud s\\
 &\quad+M\widetilde L(D_B)\int_0^t e^{\omega(t-s)}
 \norm{x_{\zeta,\eta}(s)}\,\ud s.
\end{align*}
For $z(t):=e^{-\omega t}\norm{x_{\zeta,\eta}(t)}$, this becomes
\begin{equation*}
 z(t)\le MR_{\Lambda,B}+MC_B\Phi_{-\omega}(T)
 +M\widetilde L(D_B)\int_0^t z(s)\,\ud s.
\end{equation*}
Gronwall's inequality yields
\begin{equation*}
 \sup_{t\in[0,T]}\norm{x_{\zeta,\eta}(t)}
 \le R_B^x:=e^{\omega_+T}
 \bigl(MR_{\Lambda,B}+MC_B\Phi_{-\omega}(T)\bigr)
 e^{M\widetilde L(D_B)T}.
\end{equation*}
Applying Corollary~\ref{cor:parametrized_state_to_control_bound} with
$R=R_B^x$ proves the claim with
$R_B:=\max\{R_B^x,\widehat R(D_B,R_B^x)\}$.
\end{proof}

\begin{theorem}[\bf continuous dependence on $(\zeta,\eta)$]\label{thm:parametrized_continuity}
Under the assumptions of Theorem~\ref{thm:parametrized_wellposed}, if
$(\zeta_n,\eta_n)\to(\zeta,\eta)$, then the corresponding solution pairs
satisfy
\begin{equation*}
 (x_n,u_n)\longrightarrow(x,u)
 \qquad\text{in }C([0,T];H)\times C([0,T];H).
\end{equation*}
Thus the solution map
$\mathcal S(\zeta,\eta):=(x_{\zeta,\eta},u_{\zeta,\eta})$ is continuous.
\end{theorem}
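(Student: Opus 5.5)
The plan is to compare the two reduced mild equations directly and close with Gronwall's inequality, using the uniform a priori bound to fix the radius in every bounded-range modulus. Since $(\zeta_n,\eta_n)\to(\zeta,\eta)$, the set
\[
B:=\{(\zeta_n,\eta_n):n\in\N\}\cup\{(\zeta,\eta)\}
\]
is bounded in $\Lambda\times E$. Let $D$ be its projection onto $E$, which is bounded. Lemma~\ref{lem:parametrized_apriori} then gives $R_B>0$ such that all states $x_n,x$ and all algebraic components $u_n,u$ take values in $\mathbb B_{R_B}(0)$. With the radius $R:=R_B$ now fixed, the constants $\widetilde L(D)$, $L(D)$, $\gamma(D)$ and the moduli $\Theta_{D,R_B}$ and $\gamma_{h,D,R^\sharp}$ are all available uniformly in $n$.

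First I treat the state. By Theorem~\ref{thm:parametrized_wellposed}, subtracting the two reduced mild equations \eqref{eq:parametrized_problem} gives
\[
x_n(t)-x(t)=e^{At}(\zeta_n-\zeta)+\int_0^t e^{A(t-s)}\bigl[\widetilde f_{\eta_n}(s,x_n(s))-\widetilde f_{\eta}(s,x(s))\bigr]\,\ud s.
\]
I split the integrand as $[\widetilde f_{\eta_n}(s,x_n)-\widetilde f_{\eta_n}(s,x)]+[\widetilde f_{\eta_n}(s,x)-\widetilde f_\eta(s,x)]$. The first bracket is bounded by $\widetilde L(D)\norm{x_n(s)-x(s)}$, using Lemma~\ref{lem:parametrized_reduced_stability} with $\eta_n\in D$. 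Since $\norm{x(s)}\le R_B$, the second bracket is bounded uniformly in $s$ by $\Theta_{D,R_B}(d_E(\eta_n,\eta))$, again by Lemma~\ref{lem:parametrized_reduced_stability}. Using $\norm{e^{At}}\le Me^{\omega t}$ and setting $z_n(t):=e^{-\omega t}\norm{x_n(t)-x(t)}$, which is continuous, I obtain
\[
z_n(t)\le M\norm{\zeta_n-\zeta}+M\Phi_{-\omega}(T)\,\Theta_{D,R_B}(d_E(\eta_n,\eta))+M\widetilde L(D)\int_0^t z_n(s)\,\ud s.
\]
Lemma~\ref{lem:gronwall_prelim} then yields
\[
\norm{x_n-x}_{C([0,T];H)}\le e^{\omega_+T}e^{M\widetilde L(D)T}\bigl(M\norm{\zeta_n-\zeta}+M\Phi_{-\omega}(T)\,\Theta_{D,R_B}(d_E(\eta_n,\eta))\bigr),
\]
and the right-hand side tends to $0$.

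Next I treat the algebraic variable. Here $u_n(t)=u^{\eta_n}_{x_n(t)}(t)$ and $u(t)=u^\eta_{x(t)}(t)$, with all states in $\mathbb B_{R_B}(0)$. Corollary~\ref{cor:parametrized_state_to_control_compare}, applied with $R=R_B$ and $R^\sharp=\max\{R_B,\widehat R(D,R_B)\}$, gives
\[
\sup_{t}\norm{u_n(t)-u(t)}\le\frac{\norm{P^{-1}}}{\gamma(D)}\Bigl[\alpha_h(D)\norm{x_n-x}_{C([0,T];H)}+\gamma_{h,D,R^\sharp}(d_E(\eta_n,\eta))\Bigr]\to0.
\]
This is the uniform-in-time estimate already noted after Corollary~\ref{cor:parametrized_state_to_control_time}. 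Together with the state estimate, it gives $(x_n,u_n)\to(x,u)$ in $C([0,T];H)^2$, which is the asserted continuity of $\mathcal S$.

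The main obstacle is ensuring that the parameter moduli are evaluated on one fixed ball, independent of $n$. This is exactly why Lemma~\ref{lem:parametrized_apriori} must be invoked first on the whole bounded set $B$, rather than on each pair separately. A smaller point is the choice of how to split the integrand: the Lipschitz difference must carry the parameter $\eta_n$ (covered by the uniform constant on $D$), while the parameter difference is evaluated at the limit trajectory $x$, whose range is bounded by $R_B$.
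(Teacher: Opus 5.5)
Your proposal is correct and follows essentially the same route as the paper: a common ball from Lemma~\ref{lem:parametrized_apriori}, the same add-and-subtract of $\widetilde f_{\eta_n}(s,x(s))$ controlled by $\widetilde L(D)$ and $\Theta_{D,R_B}$, Gronwall applied to $z_n(t)=e^{-\omega t}\norm{x_n(t)-x(t)}$, and Corollary~\ref{cor:parametrized_state_to_control_compare} for the algebraic component. The only differences are cosmetic. You bound $\int_0^t e^{-\omega s}\,\ud s$ by $\Phi_{-\omega}(T)$ before applying the constant-coefficient Gronwall lemma, which gives the slightly cruder factor $e^{\omega_+T}e^{M\widetilde L(D)T}$ in place of the paper's convolution-Gronwall constant. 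You also apply the comparison corollary in one step rather than splitting $u_n-u$ into a parameter part and a state part; this yields the identical bound because $L(D)=\alpha_h(D)\norm{P^{-1}}/\gamma(D)$.
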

\begin{proof}
The convergent parameter set is bounded, so
Lemma~\ref{lem:parametrized_apriori} places all trajectories in a common ball
$\mathbb B_{R_*}(0)$. Choose a bounded $D\subset E$ containing all
$\eta_n,\eta$ and set
\begin{equation*}
 \Delta_n:=\sup_{t\in[0,T]}\sup_{y\in\mathbb B_{R_*}(0)}
 \norm{\widetilde f_{\eta_n}(t,y)-\widetilde f_\eta(t,y)}\longrightarrow0.
\end{equation*}
In particular, Lemma~\ref{lem:parametrized_reduced_stability} gives
\begin{equation*}
 \Delta_n\le \Theta_{D,R_*}(d_E(\eta_n,\eta)),
\end{equation*}
which records all dependence on the external parameter needed below.
Subtracting the reduced mild equations gives
\begin{align*}
 x_n(t)-x(t)
 &=e^{At}(\zeta_n-\zeta)+\int_0^t e^{A(t-s)}
 \bigl[\widetilde f_{\eta_n}(s,x_n(s))
 -\widetilde f_\eta(s,x(s))\bigr]\,\ud s.
\end{align*}
Split the bracket by adding and subtracting
$\widetilde f_{\eta_n}(s,x(s))$. The common state-Lipschitz constant and
$\Delta_n$ then imply
\begin{align*}
 \norm{x_n(t)-x(t)}
 &\le Me^{\omega t}\norm{\zeta_n-\zeta}
 +M\widetilde L(D)\int_0^t e^{\omega(t-s)}
 \norm{x_n(s)-x(s)}\,\ud s\\
 &\quad+M\Delta_n\int_0^t e^{\omega(t-s)}\,\ud s.
\end{align*}
Set
\begin{equation*}
 d_n(t):=\norm{x_n(t)-x(t)},\qquad
 z_n(t):=e^{-\omega t}d_n(t),\qquad
 b:=M\widetilde L(D).
\end{equation*}
After multiplication by $e^{-\omega t}$, the preceding inequality becomes
\begin{equation*}
 z_n(t)\le M\norm{\zeta_n-\zeta}
 +M\Delta_n\int_0^t e^{-\omega s}\,\ud s
 +b\int_0^t z_n(s)\,\ud s.
\end{equation*}
Apply the convolution form of Gronwall's inequality to
$h_n(t):=M\Delta_n\int_0^t e^{-\omega s}\,\ud s$. Since
$h_n(0)=0$ and $h_n'(s)=M\Delta_n e^{-\omega s}$, it yields
\begin{equation*}
 z_n(t)\le
 M e^{bt}\norm{\zeta_n-\zeta}
 +M\Delta_n\int_0^t e^{-\omega s}e^{b(t-s)}\,\ud s.
\end{equation*}
Multiplying by $e^{\omega t}$ shows explicitly that
\begin{equation*}
 d_n(t)\le
 M e^{(\omega+M\widetilde L(D))t}\norm{\zeta_n-\zeta}
 +M\Delta_n\int_0^t
 e^{(\omega+M\widetilde L(D))(t-s)}\,\ud s.
\end{equation*}
Taking the supremum over $t\in[0,T]$ gives the existing key
parameter-difference estimate
\begin{equation*}
 \norm{x_n-x}_{C([0,T];H)}
 \le M e^{(\omega+M\widetilde L(D))_+T}\norm{\zeta_n-\zeta}
 +M\Phi_{\omega+M\widetilde L(D)}(T)\Delta_n\longrightarrow0.
\end{equation*}
Together with $\Delta_n\le\Theta_{D,R_*}(d_E(\eta_n,\eta))$, this displays
the two perturbation channels explicitly: the initial datum is propagated
by the semigroup estimate, while the external parameter enters through the
bounded-range moduli of $f$ and $h$. In particular,
\begin{align*}
 \norm{x_n-x}_{C([0,T];H)}
 &\le M e^{(\omega+M\widetilde L(D))_+T}\norm{\zeta_n-\zeta}\\
 &\quad+M\Phi_{\omega+M\widetilde L(D)}(T)
 \Theta_{D,R_*}(d_E(\eta_n,\eta)).
\end{align*}
For the algebraic variables, put
$R^\sharp:=\max\{R_*,\widehat R(D,R_*)\}$. Corollary
\ref{cor:parametrized_state_to_control_compare} and the frozen-state
Lipschitz estimate are applied to the splitting
\begin{align*}
 \norm{u_n(t)-u(t)}
 &\le\norm{u_{x_n(t)}^{\eta_n}(t)-u_{x_n(t)}^\eta(t)}
 +\norm{u_{x_n(t)}^\eta(t)-u_{x(t)}^\eta(t)}.
\end{align*}
Taking the supremum in time gives
\begin{align*}
 \norm{u_n-u}_{C([0,T];H)}
 &\le \frac{\norm{P^{-1}}}{\gamma(D)}
 \gamma_{h,D,R^\sharp}(d_E(\eta_n,\eta))
 +L(D)\norm{x_n-x}_{C([0,T];H)}\longrightarrow0.
\end{align*}
Substituting the preceding state estimate also gives the fully explicit
bound
\begin{align*}
 \norm{u_n-u}_{C([0,T];H)}
 &\le \frac{\norm{P^{-1}}}{\gamma(D)}
 \gamma_{h,D,R^\sharp}(d_E(\eta_n,\eta))\\
 &\quad+L(D)M e^{(\omega+M\widetilde L(D))_+T}
 \norm{\zeta_n-\zeta}\\
 &\quad+L(D)M\Phi_{\omega+M\widetilde L(D)}(T)
 \Theta_{D,R_*}(d_E(\eta_n,\eta)).
\end{align*}
Thus all constants and moduli in the parameterized conclusion are uniform
on the bounded set containing the convergent data.
This proves the asserted continuity.
\end{proof}

\section{A Contact-Mechanics Example}\label{sec:contact_example}
We close with a reduced boundary model of saturated normal contact. Complete
quasistatic models for a viscoelastic body normally start from a bulk
constitutive law and an equilibrium equation, together with displacement,
traction, and contact boundary conditions; see, for example, \cite{AS2019}.
Normal-cone and complementarity formulations provide a standard language for
unilateral contact reactions in nonsmooth mechanics
\cite{BrogliatoTanwani2020}. The present example is not derived from such a
bulk contact problem. Its purpose is instead to isolate a boundary feedback
law that verifies the abstract hypotheses of this paper and displays three
features: compression-only reactions, a finite pressure capacity, and an
implicit contact relation.

Let $\Gamma_C$ be a potential contact boundary of finite surface measure and
set
\begin{equation*}
    H=L^2(\Gamma_C).
\end{equation*}
For $s\in\Gamma_C$, we interpret $x(t,s)$ as a reduced normal indentation
coordinate, with the positive direction pointing toward the foundation.
Thus increasing $x$ corresponds to stronger indentation and a smaller
separation, whereas a larger gap corresponds to decreasing $x$. The model
does not introduce the geometric gap as an independent unknown. We interpret
$g(t,s)$ as a prescribed indentation demand and define the effective contact
drive
\begin{equation}\label{eq:concrete_drive}
    q(t,x):=g(t)-b_xx,
    \qquad b_x>0.
\end{equation}
Accordingly, increasing $g$ strengthens the compressive demand, while the
current indentation state reduces the unmet demand through the term $b_xx$.
The region $q(t,x)\le0$ will be called the unloaded region of this reduced
law. It may be interpreted as separation within the reduced model, but it is
not a full geometric nonpenetration condition.

The algebraic variable $u(t,s)$ is a normalized normal reaction. We choose
\begin{equation*}
    P=\beta I,
    \qquad \beta>0,
\end{equation*}
and identify
\begin{equation*}
    p(t)=Pu(t)=\beta u(t)
\end{equation*}
with the physical normal pressure, using the convention that compression is
positive. Here $\beta$ is a positive scaling from the normalized reaction to
pressure; no separate constitutive interpretation of $\beta$ is required.
The admissible pressure set is
\begin{equation}\label{eq:concrete_contact_K}
    K:=\{r\in L^2(\Gamma_C):0\le r(s)\le\kappa_0
    \text{ for a.e. }s\in\Gamma_C\},
    \qquad \kappa_0>0.
\end{equation}
The lower bound excludes tensile contact pressure. The upper bound represents
the maximum compressive pressure transferable by the deformable foundation;
once this capacity is reached, unresolved high-pressure behavior is modeled
by saturation. Hence this is not the classical unbounded Signorini pressure
cone. The set $K$ is nonempty, closed, bounded, and convex, and it is constant
in time, so Assumption~\ref{ass:time_regularity}{\bf (i)} is automatic. Its
metric projection is the pointwise projection onto $[0,\kappa_0]$.

We use the first-order reduced boundary dynamics
\begin{equation*}
    A=-\alpha I,
    \qquad \alpha>0,
\end{equation*}
and
\begin{equation}\label{eq:concrete_f}
    f(t,x,u)=\ell(t)-d_xx-d_uu,
    \qquad d_x,d_u>0,
\end{equation}
where $\ell\in L^1(0,T;H)$ is an external generalized boundary excitation in
the positive indentation direction. The term $-\alpha x$, encoded in the
generator, is the basic linear relaxation, while $-d_xx$ is an additional
bounded state-feedback term. These contributions could be combined in this
affine example, but they are kept separate to exhibit the abstract splitting
$Ax+f(t,x,u)$. Finally, $-d_uu=-(d_u/\beta)p$ is the reaction of the
compressive pressure against further indentation. Thus the strong-form
interpretation is
\begin{equation*}
    \dot x(t)=\ell(t)-(\alpha+d_x)x(t)-d_uu(t),
\end{equation*}
and the corresponding mild equation is
\begin{equation}\label{eq:concrete_state}
    x(t)=e^{-\alpha t}x_0+
    \int_0^t e^{-\alpha(t-s)}
    \bigl(\ell(s)-d_xx(s)-d_uu(s)\bigr)\,\ud s.
\end{equation}
This is a first-order boundary dynamics, not an inertial elasticity equation
and not a substitute for the bulk stress--strain system of a full contact
model. Since $e^{At}=e^{-\alpha t}I$, the semigroup constants are $M=1$ and
$\omega=-\alpha$.

The reduced implicit contact feedback is
\begin{equation}\label{eq:concrete_h}
    h(t,x,u)=g(t)-b_xx-b_uu,
    \qquad b_u\ge0,
\end{equation}
and the abstract normal-cone relation becomes
\begin{equation}\label{eq:concrete_contact_law}
    u(t)\in g(t)-b_xx(t)-b_uu(t)-N_K(\beta u(t)).
\end{equation}
The coefficient $b_u$ measures reaction self-feedback: a larger reaction
reduces the remaining contact drive. The functions $g\in C([0,T];H)$ and
$\ell\in L^1(0,T;H)$ provide, respectively, the required time regularity of
$h$ and the integrability of the state-equation inhomogeneity. Moreover,
\begin{equation*}
    \alpha_h=b_x,
    \qquad \alpha_f=d_x,
    \qquad \beta_f=d_u.
\end{equation*}

We now derive the frozen variational inequality and the associated pressure
law. Put
\begin{equation*}
    y=Pu=\beta u.
\end{equation*}
For fixed $(t,x)$, relation \eqref{eq:concrete_contact_law} is equivalent to
\begin{equation*}
    q(t,x)-\frac{1+b_u}{\beta}y\in N_K(y),
\end{equation*}
or, equivalently,
\begin{equation*}
    0\in F_{t,x}^{\rm c}(y)+N_K(y),
\end{equation*}
where
\begin{equation}\label{eq:concrete_F}
    F_{t,x}^{\rm c}(y)
    :=\frac{1+b_u}{\beta}y-q(t,x)
    =\frac{1+b_u}{\beta}y+b_xx-g(t).
\end{equation}
Hence the frozen contact problem is precisely
\begin{equation}\label{eq:concrete_vi}
    y\in K,
    \qquad
    \inner{F_{t,x}^{\rm c}(y)}{v-y}\ge0
    \quad\text{for all }v\in K.
\end{equation}

Because the feasible interval is pointwise and the coefficient of $y$ in
\eqref{eq:concrete_F} is positive, the frozen pressure has the explicit form
\begin{equation}\label{eq:concrete_pressure_law}
    p_x(t,s)=y_x(t,s)
    =\operatorname{proj}_{[0,\kappa_0]}
    \left(
    \frac{\beta}{1+b_u}q(t,x)(s)
    \right)
    \quad\text{for a.e. }s\in\Gamma_C.
\end{equation}
Along the actual trajectory, $p(t)=p_{x(t)}(t)$, and the three pointwise
contact regimes are
\begin{equation}\label{eq:concrete_contact_states}
    p(t,s)=
    \begin{cases}
        0,
        & q(t,x(t))(s)\le0,\\[1mm]
        \dfrac{\beta}{1+b_u}q(t,x(t))(s),
        & 0<q(t,x(t))(s)<\dfrac{1+b_u}{\beta}\kappa_0,\\[3mm]
        \kappa_0,
        & q(t,x(t))(s)\ge\dfrac{1+b_u}{\beta}\kappa_0.
    \end{cases}
\end{equation}
The first line is the unloaded regime (interpretable as separation only
within this reduced model), the second is active but unsaturated contact with
a linear compliance-type response, and the third is saturated contact at the
pressure capacity.

The VI verification is immediate but important. For all $y_1,y_2\in H$,
\begin{equation}\label{eq:concrete_strong_mono}
    \inner{F_{t,x}^{\rm c}(y_1)-F_{t,x}^{\rm c}(y_2)}{y_1-y_2}
    =\frac{1+b_u}{\beta}\norm{y_1-y_2}^2.
\end{equation}
Thus $F_{t,x}^{\rm c}$ is strongly monotone, and therefore strongly
pseudomonotone, with modulus
\begin{equation*}
    \gamma_{\rm c}=\frac{1+b_u}{\beta}>0,
\end{equation*}
and it is Lipschitz continuous with the same modulus
$L_F^{\rm c}=(1+b_u)/\beta$. Proposition~\ref{lem:unique_u} consequently
gives a unique frozen pressure $y_x(t)\in K$ and a unique normalized reaction
$u_x(t)=\beta^{-1}y_x(t)$ for every $b_u\ge0$.

This conclusion is stronger than the projection-contraction test in
Assumption~\ref{ass:lipschitz_smallness}. Indeed, with the natural choice
$\rho=\beta$, its contraction factor is
\begin{equation*}
    q_\rho
    =\norm{I-\rho P^{-1}}+\rho b_u\norm{P^{-1}}
    =b_u,
\end{equation*}
so that route requires $b_u<1$. By contrast,
\eqref{eq:concrete_strong_mono} holds for every $b_u\ge0$.

The same feedback coefficient also controls the sensitivity of the reaction.
Using Lemma~\ref{lem:vi_sensitivity} with the two frozen states $x_1,x_2$
gives
\begin{equation}\label{eq:concrete_LU_vi}
    \norm{u_{x_1}(t)-u_{x_2}(t)}
    \le L^{\rm c}\norm{x_1-x_2},
    \qquad
    L^{\rm c}:=\frac{b_x}{1+b_u}.
\end{equation}
Equivalently, the physical pressure satisfies
\begin{equation*}
    \norm{p_{x_1}(t)-p_{x_2}(t)}
    \le\frac{\beta b_x}{1+b_u}\norm{x_1-x_2}.
\end{equation*}
Increasing $b_u$ strengthens the reaction self-feedback and decreases the
slope from the contact drive to the reaction; it therefore reduces the
state-to-reaction sensitivity. The reduced nonlinearity has the Lipschitz
constant
\begin{equation*}
    \widetilde L_{\rm c}
    =d_x+d_uL^{\rm c}
    =d_x+\frac{d_ub_x}{1+b_u}.
\end{equation*}
Substitution of $M=1$, $\omega=-\alpha$, and $L=L^{\rm c}$ into
Corollary~\ref{cor:hadamard_wellposed} yields the concrete Hadamard constant
\begin{equation}\label{eq:concrete_CT}
    C_T^{\rm c}
    =e^{\left(d_x+\frac{d_ub_x}{1+b_u}-\alpha\right)_+T}
    \left(1+\frac{b_x}{1+b_u}\right).
\end{equation}
Consequently, for two solutions with the same data and different initial
states,
\begin{equation*}
    \norm{(x_1,u_1)-(x_2,u_2)}_{C\times C}
    \le C_T^{\rm c}\norm{x_{0,1}-x_{0,2}}.
\end{equation*}
If
\begin{equation*}
    \alpha>d_x+\frac{d_ub_x}{1+b_u},
\end{equation*}
Corollary~\ref{cor:incremental_stability} gives
\begin{align*}
    \norm{x_1(t)-x_2(t)}
    &\le
    e^{-\left(\alpha-d_x-\frac{d_ub_x}{1+b_u}\right)t}
    \norm{x_{0,1}-x_{0,2}},\\
    \norm{u_1(t)-u_2(t)}
    &\le
    \frac{b_x}{1+b_u}
    e^{-\left(\alpha-d_x-\frac{d_ub_x}{1+b_u}\right)t}
    \norm{x_{0,1}-x_{0,2}}.
\end{align*}
This is a sufficient decay estimate inherited from the abstract Lipschitz
argument. In particular, that argument uses the magnitude $d_x$ and does not
exploit the dissipative sign of $-d_xx$; the displayed exponent should not be
interpreted as an exact or optimal physical decay rate.

If either $g$ or $\ell$ depends on an external parameter and the resulting
data satisfy Assumption~\ref{ass:parametrized_data}, then
Theorems~\ref{thm:parametrized_wellposed}
and~\ref{thm:parametrized_continuity} apply to the corresponding
parameterized contact system.

Finally, the frozen VI has the $H$-valued projection residual
\begin{equation*}
    R_\tau(t,x,y)
    :=y-\proj_K\bigl(y-\tau F_{t,x}^{\rm c}(y)\bigr),
    \qquad \tau>0.
\end{equation*}
The identity $R_\tau(t,x,y)=0$ is equivalent to
\eqref{eq:concrete_vi}, and $\proj_K$ is the pointwise truncation encoded in
\eqref{eq:concrete_pressure_law}. At the continuous level this residual is an
abstract function-space quantity. A computational projection, active-set,
Newton, or semismooth Newton method first requires a spatial discretization;
Newton-type convergence additionally requires suitable finite-dimensional
regularity. No convergence theorem for such a fully discrete contact
algorithm is asserted here.

\section{Conclusion}\label{sec:concl}
We have developed a direct semigroup-based analysis for a coupled nonsmooth dynamical system consisting of a semilinear evolution equation and an implicit normal-cone algebraic relation. The main structural step was the projection reformulation of the implicit inclusion, which yielded a parameterized variational inequality and a single-valued state-to-control map under a strongly pseudomonotone frozen operator. A Lipschitz-smallness condition appears as a concrete strong-monotone verification of the algebraic hypothesis, while the analysis itself is formulated at the strongly pseudomonotone level and includes nonmonotone frozen laws. This reduction allowed us to prove global existence and uniqueness, Hadamard well-posedness with respect to the initial datum, incremental exponential stability in dissipative regimes, and continuity of the parameter-to-solution map. The reduced contact example shows how the VI formulation verifies solvability and a conservative sufficient stability rate beyond the natural contraction threshold. Several directions remain open, including parameter-dependent moving sets, perturbations of the underlying semigroup, and higher regularity of the associated solution map.


\begin{thebibliography}{99}

\bibitem{AS2019}
S. Adly and M. Sofonea, 
\emph{Time-dependent inclusions and sweeping processes in contact mechanics}, 
Zeitschrift f\"{u}r angewandte Mathematik und Physik (ZAMP), 70:39 (2019).
\url{https://doi.org/10.1007/s00033-019-1084-4}

\bibitem{AlshariefJourani2025}
L. Alsharief and A. Jourani,
\emph{Optimal control of implicit polyhedral sweeping processes involving the coderivative of the projection mapping},
SIAM Journal on Control and Optimization, 63 (2025), 2856--2886.
\url{https://doi.org/10.1137/23M1622118}

\bibitem{Aubin1984}
J.-P. Aubin and A. Cellina, 
\emph{Differential Inclusions: Set-Valued Maps and Viability Theory}, 
Grundlehren der mathematischen Wissenschaften, Vol. 264, Springer-Verlag, Berlin, (1984).
\url{https://doi.org/10.1007/978-3-642-69512-4}

\bibitem{BauschkeCombettes2011}
H. H. Bauschke and P. L. Combettes,
\emph{Convex Analysis and Monotone Operator Theory in Hilbert Spaces},
CMS Books in Mathematics, Springer, New York, 2011.
\url{https://doi.org/10.1007/978-1-4419-9467-7}

\bibitem{BernicotVenel2013}
F. Bernicot and J. Venel,
\emph{Convergence order of a numerical scheme for sweeping process},
SIAM Journal on Control and Optimization, 51 (2013), 3075--3092.
\url{https://doi.org/10.1137/120882044}

\bibitem{Bouach2022JDE}
A. Bouach, T. Haddad, and B. S. Mordukhovich,
\emph{Optimal control of nonconvex integro-differential sweeping processes},
Journal of Differential Equations, 329 (2022), 255--317.
\url{https://doi.org/10.1016/j.jde.2022.05.004}

\bibitem{Bouach2022JOTA}
A. Bouach, T. Haddad, and L. Thibault,
\emph{On the Discretization of Truncated Integro-Differential Sweeping Process and Optimal Control},
Journal of Optimization Theory and Applications, 193 (2022), 785--830.
\url{https://doi.org/10.1007/s10957-021-01991-z}

\bibitem{BouachHaddadThibault2022SIAM}
A. Bouach, T. Haddad, and L. Thibault,
\emph{Nonconvex integro-differential sweeping process with applications},
SIAM Journal on Control and Optimization, 60 (2022), 2971--2995.
\url{https://doi.org/10.1137/21M1397635}

 


\bibitem{Brezis}
H. Brezis,
\emph{Functional Analysis, Sobolev Spaces and Partial Differential Equations},
Springer, 2011.
\url{https://link.springer.com/book/10.1007/978-0-387-70914-7}

\bibitem{BrogliatoTanwani2020}
B. Brogliato and A. Tanwani,
\emph{Dynamical Systems Coupled with Monotone Set-Valued Operators: Formalisms, Applications, Well-Posedness, and Stability},
SIAM Review, 62 (2020), 3--129.
\url{https://doi.org/10.1137/18M1234795}

\bibitem{ChenXJ2022SIOPT} X. Chen and J. Shen, \emph{Dynamic stochastic variational inequalities and convergence of discrete approximation},
SIAM Journal on Optimization, 32 (2022), 2909--2937.
\url{https://doi.org/10.1137/21M145536X}

\bibitem{Chen2013SIOPT} X. Chen and Z. Wang, \emph{Convergence of regularized time-stepping methods for differential variational inequalities}, SIAM journal on optimization, 23 (2013),  1647--1671.

\bibitem{SIMAA}N. H. Du, V. H. Linh, V. Mehrmann and D. D. Thuan,
\emph{Stability and robust stability of linear time-invariant delay differential-algebraic equations}, SIAM Journal on Matrix Analysis and Applications, 34 (2013),  1631--1654.
\url{https://doi.org/10.1137/13092611}


\bibitem{FacchineiPang2003}
F.~Facchinei and J.-S. Pang,
\newblock {\em Finite-Dimensional Variational Inequalities and Complementarity Problems},
\newblock Springer, New York, 2003.
\url{https://doi.org/10.1007/b97544}

\bibitem{Bouach2024ProxRegular}
S. Gaouir, T. Haddad, and L. Thibault,
\emph{Prox-Regular Integro-Differential Sweeping Process},
Journal of Optimization Theory and Applications, 203 (2024), 1413--1438.
\url{https://doi.org/10.1007/s10957-024-02472-9}

\bibitem{Helmut} H. Gfrerer and J. V.  Outrata,  {\em On a semismooth$^*$ Newton method for solving generalized equations}, SIAM Journal on Optimization,  31 (2021) 489--517. \url{https://doi.org/10.1137/19M1257408}


\bibitem{Godoy2025}
M. Godoy, M. Torres-Valdebenito, and E. Vilches,
\emph{A Fixed-Point Approach to History-Dependent Sweeping Processes},
Journal of Optimization Theory and Applications, 206 (2025), Article 37.
\url{https://doi.org/10.1007/s10957-025-02714-4}

\bibitem{gron19} T. H. Gronwall, {\em Note on the derivatives with respect to a parameter of the solutions of a system of differential equations,} Annals of Mathematics, 20 (1919), 292--296. \url{https://doi.org/10.2307/1967124}

\bibitem{Solo14} A. F. Izmailov and  M. V. Solodov,  {\em Newton-Type Methods for Optimization and Variational Problems}, Springer, New York, 2014. \url{https://link.springer.com/book/10.1007/978-3-319-04247-3}


\bibitem{Henrion2023}
R. Henrion, A. Jourani, and B. S. Mordukhovich,
\emph{Controlled polyhedral sweeping processes: Existence, stability, and optimality conditions},
Journal of Differential Equations, 366 (2023), 408--443.
\url{https://doi.org/10.1016/j.jde.2023.04.010}

\bibitem{JV2019}
A. Jourani and E. Vilches, 
\emph{A differential equation approach to implicit sweeping processes}, 
Journal of Differential Equations, 266 (2019) 5168--5184.
\url{https://doi.org/10.1016/j.jde.2018.10.024}


\bibitem{KS90} S. Karamardian and S. Schaible,  {\em Seven kinds of monotone maps.} Journal of Optimization Theory and Applications,  66 (1990) 37--46.  \url{https://doi.org/10.1007/BF00940531}

\bibitem{KimVuongKhanh2016}
D.~S. Kim, P.~T. Vuong, and P.~D. Khanh,
\emph{Qualitative properties of strongly pseudomonotone variational inequalities},
 Optimization Letters, 10 (2016),  1669--1679. \url{https://doi.org/10.1007/s11590-015-0960-x}

 \bibitem{kha22} P. T. Kha and  P.~D. Khanh,  \emph{Variational inequalities governed by strongly pseudomonotone operators},  Optimization, 71 (2022), 1983--2004. \url{https://doi.org/10.1080/02331934.2020.1847107}

\bibitem{kv14} P.~D. Khanh and P.~T. Vuong,  \emph{Modified projection method for strongly pseudomonotone variational inequalities}, Journal of Global Optimization, 58 (2014), 341--350. \url{https://doi.org/10.1007/s10898-013-0042-5}

\bibitem{BorisKhanhPhat} P. D. Khanh, B. S. Mordukhovich, and V. T. Phat,  {\em A generalized Newton method for subgradient systems}, Mathematics of Operations Research, 48 (2023), 1811--1845. \url{https://doi.org/10.1287/moor.2022.1320}


\bibitem{kmp24cod}  P. D. Khanh, B. S. Mordukhovich, and V. T. Phat, {\em Coderivative-based Newton methods in structured nonconvex and nonsmooth optimization}, arXiv:2403.04262 (2024). \url{https://arxiv.org/abs/2403.04262}

\bibitem{kmptjogo}  P. D. Khanh, B. S. Mordukhovich, V. T. Phat, and D. B.  Tran,   {\em Generalized damped Newton algorithms in nonsmooth optimization via second-order subdifferentials},  Journal of Global Optimization, 86 (2023), 93--122. \url{https://doi.org/10.1007/s10898-022-01248-7} 

			
\bibitem{kmptmp} P. D. Khanh, B. S. Mordukhovich, V. T. Phat, and D. B.  Tran,   {\em Globally convergent coderivative-based generalized Newton methods in nonsmooth optimization},  Mathematical Programming, 205 (2024), 373--429. \url{https://doi.org/10.1007/s10107-023-01980-2}





\bibitem{KrejciMonteiroRecupero2021}
P. Krej\v{c}\'{i}, G. A. Monteiro, and V. Recupero,
\emph{Explicit and Implicit Non-convex Sweeping Processes in the Space of Absolutely Continuous Functions},
Applied Mathematics and Optimization, 84(Suppl. 2) (2021), 1477--1504.
\url{https://doi.org/10.1007/s00245-021-09801-8}

\bibitem{LiLiu2018}
X. Li and Z. Liu,
\emph{Sensitivity analysis of optimal control problems described by differential hemivariational inequalities},
SIAM Journal on Control and Optimization, 56 (2018), 3569--3597.
\url{https://doi.org/10.1137/17M1162275}

\bibitem{LiuZengMotreanu2016}
Z. Liu, S. Zeng, and D. Motreanu,
\emph{Evolutionary problems driven by variational inequalities},
Journal of Differential Equations, 260 (2016), 6787--6799.
\url{https://doi.org/10.1016/j.jde.2016.01.012}

\bibitem{Moreau1977}
J. J. Moreau, 
\emph{Evolution problem associated with a moving convex set in a Hilbert space}, 
Journal of Differential Equations, 26(3) (1977) 347--374.
\url{https://doi.org/10.1016/0022-0396(77)90085-7}

\bibitem{NacrySofonea2022}
F. Nacry and M. Sofonea,
\emph{History-dependent operators and prox-regular sweeping processes},
Fixed Point Theory and Algorithms for Sciences and Engineering, 2022 (2022), Article 5.
\url{https://doi.org/10.1186/s13663-022-00715-w}

\bibitem{PangStewart2008}
J.-S. Pang and D. E. Stewart,
\emph{Differential variational inequalities},
Mathematical Programming, 113(2) (2008), 345--424.
\url{https://doi.org/10.1007/s10107-006-0052-x}

\bibitem{PangStewart2009}
J.-S. Pang and D. E. Stewart,
\emph{Solution dependence on initial conditions in differential variational inequalities}, Mathematical Programming, 116 (2098), 429--460.
\url{https://doi.org/10.1007/s10107-007-0117-5}

\bibitem{Pazy1983}
A. Pazy, 
\emph{Semigroups of Linear Operators and Applications to Partial Differential Equations}, 
Applied Mathematical Sciences, Vol. 44, Springer-Verlag, New York, (1983).
\url{https://doi.org/10.1007/978-1-4612-5561-1}

\bibitem{PinhoFerreiraSmirnov2023}
M. do Ros\'{a}rio de Pinho, M. M. A. Ferreira, and G. Smirnov,
\emph{A Maximum Principle for Optimal Control Problems Involving Sweeping Processes with a Nonsmooth Set},
Journal of Optimization Theory and Applications, 199(1) (2023), 273--297.
\url{https://doi.org/10.1007/s10957-023-02283-4}

\bibitem{RockafellarWets1998}
R. T. Rockafellar and R. J.-B. Wets,
\emph{Variational Analysis},
Grundlehren der mathematischen Wissenschaften, Vol. 317, Springer, Berlin, 1998.
\url{https://doi.org/10.1007/978-3-642-02431-3}

\bibitem{SeifertTrostorffWaurick2022}
C. Seifert, S. Trostorff, and M. Waurick,
\emph{Evolutionary Inclusions},
in \emph{Evolutionary Equations}, Operator Theory: Advances and Applications, Vol. 287, Birkh\"{a}user, Cham, 2022, pp. 275--297.
\url{https://doi.org/10.1007/978-3-030-89397-2_17}

\bibitem{TangEtAl2020}
G.-j. Tang, J. Cen, V. T. Nguyen, and S. Zeng,
\emph{Differential variational--hemivariational inequalities: existence, uniqueness, stability, and convergence},
Journal of Fixed Point Theory and Applications, 22 (2020), Article 83.
\url{https://doi.org/10.1007/s11784-020-00814-4}

\bibitem{Ul} M. Ulbrich,  {\em Semismooth Newton Methods for Variational Inequalities and Constrained Optimization Problems in Function Spaces,} SIAM, Philadelphia, 2011. \url{https://doi.org/10.1137/1.9781611970692}


\bibitem{Zeidler1986}
E. Zeidler, 
\emph{Nonlinear Functional Analysis and its Applications: I: Fixed-Point Theorems}, 
Springer-Verlag, New York, (1986).
\url{https://doi.org/10.1007/978-1-4612-4838-5}

\bibitem{ZengDuTimoshin2026}
S. Zeng, J. Du, and S. A. Timoshin,
\emph{Controllability analysis of a class of differential-variational inequalities with nonlocal conditions},
SIAM Journal on Control and Optimization, 64 (2026), 985--1010.
\url{https://doi.org/10.1137/25M1819028}

\bibitem{ZengMigorskiLiu2021}
S. Zeng, S. Migo\'rski, and Z. Liu,
\emph{Well-Posedness, Optimal Control, and Sensitivity Analysis for a Class of Differential Variational-Hemivariational Inequalities},
SIAM Journal on Optimization, 31(4) (2021), 2829--2862.
\url{https://doi.org/10.1137/20M1351436}

\bibitem{ZM95} D. Zhu and P. Marcotte, {\em New classes of generalized monotonicity}, Journal of Optimization Theory and Applications, 87 (1995) 457--471. \url{https://doi.org/10.1007/BF02192574}

\end{thebibliography}
\end{document}